\theoremstyle{definition}
\newtheorem{theorem}{Theorem}[section]
\newtheorem{definition}[theorem]{Definition}
\newtheorem{lemma}[theorem]{Lemma}
\newtheorem{corollary}[theorem]{Corollary}
\newtheorem{remark}[theorem]{Remark}
\numberwithin{equation}{section}
\newcommand{\PL}{\mathsf{PL}}
\newcommand{\PLsp}{\mathsf{PL}^+}
\newcommand{\TPL}{\mathsf{TPL}}
\newcommand{\QPL}{\mathsf{QPL}}
\newcommand{\QPLsp}{\mathsf{QPL}^+}
\newcommand{\TQPL}{\mathsf{TQPL}}
\newcommand{\Sh}{\mathcal{S}} 
\newcommand{\Lang}{\mathcal{L}} 
\newcommand{\Lb}{\Lang_\Box} 
\newcommand{\Lbf}{\Lang_{\Box, \forall}} 
\newcommand{\Lp}{\Lang^+_{\Box, \forall}} 
\newcommand{\Lbp}{\Lang^+_\Box} 
\newcommand{\Lpoly}[1]{\Lang_{[\alpha], \alpha < #1}}
\newcommand{\Lpolyp}[1]{\Lang^+_{[\alpha], \alpha < #1}}
\newcommand{\pfs}{^{\star}} 
\newcommand{\pfsT}{^{\star_{T}}} 
\newcommand{\pfsPA}{^{\star_{\PA}}} 
\newcommand{\pfsHA}{^{\star_{\HA}}} 
\newcommand{\fs}{^{*}} 
\newcommand{\fsT}{^{*_{T}}} 
\newcommand{\fsHA}{^{*_{\HA}}} 
\newcommand{\ofs}{^{\circledast}} 
\newcommand{\ofsT}{^{\circledast_{T}}} 
\newcommand{\ofsPA}{^{\circledast_{\PA}}} 
\newcommand{\ofsHA}{^{\circledast_{\HA}}} 
\newcommand{\is}{^{\circ}} 
\newcommand{\isT}{^{\circ_{\tau}}} 
\newcommand{\isHA}{^{\circ_{\eta}}} 
\newcommand{\ois}{^{\circledcirc}} 
\newcommand{\oisT}{^{\circledcirc_{\tau}}} 
\newcommand{\oisHA}{^{\circledcirc_{\eta}}} 
\newcommand{\four}{\normalfont{\textbf{4}}}
\newcommand{\fE}{\bm{\forall}\normalfont{\textbf{E}}}
\newcommand{\fI}{\bm{\forall}\normalfont{\textbf{I}}}
\newcommand{\Prop}{\normalfont{\textsc{Prop}}} 
\newcommand{\Rel}{\normalfont{\textsc{Rel}}} 
\newcommand{\CPC}{\mathsf{CPC}}
\newcommand{\IPC}{\mathsf{IPC}}
\newcommand{\K}{\mathsf{K}}
\newcommand{\Kf}{\mathsf{K4}}
\newcommand{\Kfthree}{\mathsf{K4.3}}
\newcommand{\T}{\mathsf{T}}
\newcommand{\Sfour}{\mathsf{S4}}
\newcommand{\Sfourthree}{\mathsf{S4.3}}
\newcommand{\Sfive}{\mathsf{S5}}
\newcommand{\GL}{\mathsf{GL}}
\newcommand{\iGL}{\mathsf{iGL}}
\newcommand{\GLthree}{\mathsf{GL.3}}
\newcommand{\GLS}{\mathsf{GLS}}
\newcommand{\GLP}{\mathsf{GLP}}
\newcommand{\GLPzero}{\mathsf{GLP}^0}
\newcommand{\RC}{\mathsf{RC}}
\newcommand{\RCzero}{\mathsf{RC}^0}
\newcommand{\QRC}{\mathsf{QRC}_1}
\newcommand{\QGL}{\mathsf{QGL}}
\newcommand{\QK}{\mathsf{QK}}
\newcommand{\QKf}{\mathsf{QK}4}
\newcommand{\Q}{\mathsf{Q}}
\newcommand{\IPL}{\mathsf{RL}}
\newcommand{\IQPL}{\mathsf{QRL}}
\newcommand{\PSPACE}{\mathsf{PSPACE}}
\newcommand{\PTIME}{\mathsf{PTIME}}
\newcommand{\coNP}{\mathsf{coNP}}
\newcommand{\DC}{\mathcal{DC}} 
\newcommand{\DCone}{\DC(\Pi_1, \Sigma_1)} 
\newcommand{\inspect}[1]{%
  \def\inspectspace{\mskip3mu\relax}
  \sbox\z@{$#1$}
  \sbox\tw@{\thickmuskip=0mu$#1$}
  \ifdim\wd\tw@<\wd\z@ \def\inspectspace{\mskip9mu\relax}\fi 
}
\DeclareMathOperator{\Existsa}{\exists}
\newcommand{\Exists}[1]{
  \inspect{#1}
  \Existsa #1 \inspectspace
}
\DeclareMathOperator{\Foralla}{\forall}
\newcommand{\Forall}[1]{
  \inspect{#1}
  \Foralla #1 \inspectspace
}
\newcommand{\F}{\mathcal{F}}
\newcommand{\M}{\mathcal{M}}
\newcommand{\la}{\langle}
\newcommand{\ra}{\rangle}
\newcommand{\N}{\mathbb{N}}
\newcommand{\PA}{\mathsf{PA}}
\newcommand{\HA}{\mathsf{HA}}
\newcommand{\EA}{\mathsf{EA}}
\newcommand{\iEA}{\mathsf{iEA}}
\newcommand{\fv}{\normalfont{\text{fv}}} 
\newcommand{\mdepth}{\normalfont{\text{d}}_\Diamond} 
\renewcommand{\vec}[1]{\bm{#1}}
\newcommand{\gnum}[1]{\ulcorner #1 \urcorner} 
\newcommand{\xaltern}[1]{\sim_{#1}} 
\newcommand{\subst}[2]{[#1 {\leftarrow} #2]} 
\DeclareMathOperator{\umod}{mod} 
\newcommand{\bij}[1]{\lcorners #1 \rcorners} 
\title{Strictly Positive Fragments of\\ the Provability Logic of Heyting Arithmetic}
  \author{
    Ana de Almeida Borges%
    \thanks{\url{ana.agvb@gmail.com}}
    \ \orcidlink{0000-0001-5152-198X}
  \and
    Joost J. Joosten%
    \thanks{\url{jjoosten@ub.edu}}
    \ \orcidlink{0000-0001-9590-5045}
  }
  \date{University of Barcelona}
\begin{document}

\maketitle

\begin{abstract}
  We determine the strictly positive fragment $\QPLsp(\HA)$ of the quantified provability logic $\QPL(\HA)$ of Heyting Arithmetic. We show that $\QPLsp(\HA)$ is decidable and that it coincides with $\QPLsp(\PA)$, which is the strictly positive fragment of the quantified provability logic of of Peano Arithmetic. This positively resolves a previous conjecture of the authors described in \cite{Escape}.

On our way to proving these results, we carve out the strictly positive fragment $\PLsp(\HA)$ of the provability logic $\PL(\HA)$ of Heyting Arithmetic, provide a simple axiomatization, and prove it to be sound and complete for two types of arithmetical interpretations. 

The simple fragments presented in this paper should be contrasted with a recent result by Mojtahedi \cite{Mojtahedi2022}, where an axiomatization for $\PL(\HA)$ is provided. This axiomatization, although decidable, is of considerable complexity.
\end{abstract}

\section{Introduction}

The provability logic of Heyting Arithmetic ($\HA$) has recently been axiomatized by Mojtahedi \cite{Mojtahedi2022}, solving a fifty-year-old problem. This axiomatization is decidable but also quite technically complex.

One way of simplifying a logic $L$ without losing all of its expressive power is to restrict its language to the strictly positive fragment $L^+$. This is the fragment containing only $\top$, propositional or predicate symbols (depending on whether $L$ is propositional or quantified, respectively), conjunctions, universal quantifiers (if $L$ is quantified), and diamonds (if $L$ is modal). Theorems of $L^+$ are implications between strictly positive formulas. For example, $\Forall x \Diamond P(x) \vdash_{L^+} \Diamond \Forall x P(x)$ is a grammatical statement that might or not hold depending on $L^+$.
There are several examples of pairs $\la L, L^+ \ra$ where $L^+$ has lower complexity than $L$ (see Table~\ref{table:sp} in Section~\ref{sec:sp} for a survey), and for at least some of them $L^+$ has been actively studied and used in its own right.

In \cite{Escape}, we studied the strictly positive fragment of the quantified provability logic of Peano Arithmetic ($\PA$) and conjectured that it might coincide with the strictly positive fragment of the quantified provability logic of $\HA$.
In the present paper, we show that this is indeed the case, by providing a link between a $\PA$-extension and an $\HA$-extension of a given arithmetical realization.

We further study the strictly positive fragment of the (propositional) provability logic of $\HA$, and show that it is quite simple and also coincides with the strictly positive fragment of the provability logic of $\PA$.

In what follows, we introduce the main concepts related to provability logics, providing some more context and history, and end this section with a more detailed plan of the paper.

\subsection{Provability logics}

The arithmetical predicate $\Box_T(\cdot)$ has been often used ever since Gödel. This predicate is a natural and $\Sigma_1$-complete formalization of the concept of provability in the theory $T$, in that $\N \vDash \Box_T(n)$ holds exactly when $n$ is the Gödel number of a formula that is provable in $T$ (as long as $T$ is a reasonable enough theory, for example as long as $T$ has a decidable set of axioms).

The provability logic $\PL(T)$ of a theory $T$ describes the structural behavior of $\Box_T(\cdot)$ as can be proven in $T$ itself. This structural behavior is expressed in a propositional modal logic where, for example, we have $(\Box(p \land q) \to \Box p) \in \PL(T)$, since, for every possible arithmetical reading $p\pfs$ and $q\pfs$ of $p$ and $q$, respectively, we have that $T\vdash \Box_T(p\pfs \land q\pfs) \to \Box_T p\pfs$. In a similar spirit, one defines the set $\TPL(T)$ to be the set of those propositional logical formulas that induce corresponding formulas that are always true on the standard model of arithmetic.

The definition of $\PL(T)$ is of complexity $\Pi_2$ (``for all readings $\cdot\pfs$, it is provable that $\varphi\pfs$''), and the definition of $\TPL(T)$ is not even arithmetical (``for all readings $\cdot\pfs$, it is true that $\varphi\pfs$).
However, it turns out that both $\PL(T)$ and $\TPL(T)$ can be generated by the decidable modal logics $\GL$ (thus named in reference to Gödel and Löb) and $\GLS$ (thus named in reference to $\GL$ and Solovay) respectively, for a large class of classical logical theories $T$, including Peano Arithmetic. See \cite{Boolos1993} for a more thorough overview.

\subsection{Quantified provability logic and its fragments}

The quantified provability logic $\QPL(T)$ of a theory $T$ is defined in an analogous way to $\PL(T)$.
We restrict ourselves to the languages of predicate logic without function symbols or identity.

As an example, the converse Barcan formula is valid:
\begin{equation*}
  (\Box_T \Forall x \varphi(x) \to \Forall x \Box_T \varphi(\dot x))
  \in
  \QPL(T)
  ,
\end{equation*}
but the Barcan formula is not:
\begin{equation*}
  (\Forall x \Box_T \varphi(\dot x) \to \Box_T \Forall x \varphi(x))
  \notin
  \QPL(T).
\end{equation*}

Recall that, in the classical case, $\PL(T)$ allows for a reduction from a $\Pi_2$-definition to a decidable one. Vardanyan proved in \cite{Vardanyan1986} that this is not the case for $\QPL(\PA)$, which is actually $\Pi_2$-complete. George Boolos and Vann McGee \cite{BoolosMcGee1987} proved that $\TQPL(\PA)$ is $\Pi_1$-complete in the set of true arithmetic. Vardanyan's result allows for various generalizations \cite{Vardanyan1988, Berarducci1989, VisserAndDeJonge:2006:NoEscape} and for a few exceptions of very limited range \cite{ArtemovJaparidze1990, Yavorsky2002, HaoTourlakis2021}.

In \cite{Escape}, the strictly positive fragment $\QPLsp(\PA)$ of the quantified provability logic of $\PA$ was presented and proven to be decidable, axiomatized by a logic called $\QRC$. The same paper proved soundness of $\QRC$ for $\HA$ and conjectured completeness. The current paper settles that conjecture positively and proves completeness of $\QRC$ for $\HA$ for two different arithmetical readings: the \emph{finitary reading} and the \emph{infinitary reading}. 

The finitary reading translates predicates to arithmetical formulas with the same arity, as is usual in the case of quantified provability logic. The infinitary reading falls in the tradition so-called reflection calculi. Propositional reflection calculus translates propositional variables to elementary axiomatizations of theories. Generalizing this to predicate logic, the infinitary reading translates a predicate $S$ to the axiomatization of a collection of theories parametrized by a number of parameters equal to the arity of~$S$.

\subsection{Constructive provability logics}

For about fifty years, the provability logic of Heyting Arithmetic remained an open problem. Recently, Mojtahedi presented a solution, with a complete axiomatization of $\PL(\HA)$ described in \cite{Mojtahedi2022}.
Though this axiomatization is proven to be decidable, it is of considerable complexity.
To give an impression of the nature of $\PL(\HA)$, we mention an incomplete selection of landmark results here and refer the reader to \cite{arte:prov04} for a more detailed overview.

De Jongh proved in \cite{dejo:maxi70} the non-trivial result that the propositional fragment of $\PL(\HA)$ is exactly $\IPC$, the set of intuitionistically valid propositional formulas.\footnote{The result is not as stable as in the classical logical case and, for example, the propositional fragment of the provability logic of $\HA$ together with Markov's principle and with Church's thesis is unknown.}
It was easy to observe that $\iGL \subseteq \PL(\HA)$, where $\iGL$ is exactly as $\GL$ except that it is defined over $\IPC$ instead of over the classical propositional logic $\CPC$.

Since $\HA$ has different meta-mathematical properties than $\PA$, different provability logical principles were expected to be found. For example, Markov's rule is admissible for $\HA$ in that $\HA \vdash \neg \neg \pi$ implies $\HA \vdash \pi$ whenever $\pi \in \Pi_2$.
Then the admissibility of Markov's rule translates to $(\Box \neg \neg \Box \varphi \to \Box \Box \varphi) \in \PL(\HA)$,
since $\Box_{\HA} \varphi$ is always $\Sigma_1$ regardless of $\varphi$.
More generally:
\begin{equation*}
  \left(
    \Box \neg \neg \left(\Box \varphi \to \bigvee_i \Box \varphi_i\right)
    \to
    \Box \left(\Box \varphi \to \bigvee_i \Box \varphi_i\right)
  \right) \in \PL(\HA)
  .
\end{equation*}

There is actually a more general relationship between $\PL(\HA)$ and admissible rules. Visser showed that the admissible rules of $\HA$ and the admissible rules of $\IPC$ are the same so that each admissible rule $\frac{\varphi}{\psi}$ of $\HA$ gives rise to $(\Box \varphi \to \Box \psi) \in \PL(\HA)$.
Although Rybakov showed in \cite{ryba:admi97} that the collection of admissible rules of $\IPC$ is decidable, it was only later that concrete characterizations were found. In particular, Iemhoff showed in \cite{iemh:admi01} how the admissible rules can be defined in terms of Visser's rule.

The exact definition of Visser's rule is not of real importance in the context of this paper.
However, we wish to convey an impression of the complexity of $\PL(\HA)$ and as such we find it instructive to formulate it.
For an arbitrary natural number $n$, using the abbreviation
$
  (\varphi)(\psi_1, \hdots, \psi_n) := (\varphi \to \psi_1) \lor \cdots \lor (\varphi \to \psi_n)
$
and letting $\alpha := \bigwedge_{i=1}^n (\epsilon_i \to \zeta_i)$, Visser's rule $({\sf V_n})$ is defined as:
\begin{equation*}
  \frac
    {\left(\alpha \to (\beta \lor \gamma) \right) \lor \delta}
    {(\alpha)(\epsilon_1, \hdots, \epsilon_n, \beta, \gamma) \lor \delta}
    ,
\end{equation*}
leading to:
\begin{equation*}
  \left(
    \Box \left( \left(\alpha \to (\beta \lor \gamma)\right) \lor \delta\right)
    \to
    \Box \left( (\alpha)(\epsilon_1, \hdots, \epsilon_n, \beta, \gamma) \lor \delta \right)
  \right) \in \PL(\HA)
  .
\end{equation*}

So far we have seen how various meta-mathematical properties led to new principles of the form $(\Box \varphi \to \Box \psi)$ in $\PL(\HA)$. However, not every meta-mathematical property of $\HA$ directly translates into a principle in $\PL(\HA)$. For example, Myhill~\cite{myhill73} and Friedman~\cite{frie:disj75} observed that the disjunction property of $\HA$ is not provable in $\HA$ itself, whence $(\Box (p \lor q) \to \Box p \lor \Box q) \notin \PL(\HA)$. Notwithstanding this negative result, Leivant
could prove in \cite{leiv:abso75,viss:subs02} a weak version of the disjunction property to the extent that
$(\Box (p\lor q) \to \Box ( p \lor \Box q)) \in \PL(\HA)$, whence also $(\Box (p\lor q) \to \Box ( \Box p \lor \Box q)) \in \PL(\HA)$.

Visser characterized the closed fragment of $\PL(\HA)$ in \cite{viss:eval85} and the ``logic'' of $\Sigma_1$-substitutions of $\HA$ in \cite{viss:subs02}. The latter turned out to be important for Mojtehedi's result, in a sense, reflecting that the substitutions in Solovay's completeness proof are Boolean combinations of $\Sigma_1$-formulas.
In \cite{Mojtahedi2022}, Mojtaba Mojtahedi could thus include and extend all the previous results, culminating in a complete and decidable axiomatization of $\PL(\HA)$. This axiomatization is $\iGL$ together with axioms of the form $\Box \varphi \to \Box \psi$, where $\varphi$ and $\psi$ satisfy a complicated yet decidable condition.

The first result of this paper is a simple $\PTIME$-decidable fragment of $\PL(\HA)$: the strictly positive fragment $\PL(\HA)^+$. The second result of this paper is that the first result can be extended to the quantified setting, leading to a decidable (although to our knowledge not in $\PTIME$) fragment of $\QPL(\HA)$. Since the propositional logic $\PL(\HA)$ was already such a challenge, there has been hardly any work done on the quantified provability logic $\QPL(\HA)$ of $\HA$ before now.

\subsection{Plan of the paper}

In Section \ref{section:ModalLogicsAndStrict} we introduce the most relevant modal logics for this paper, both propositional and quantified.
For each modal logic, we define its corresponding strictly positive fragment. As the main contribution of this section, we display some literature findings showing how in general a shift from the full logic to the strictly positive fragment comes with a drop in computational complexity of the problem of theoremhood.

In Section \ref{section:SemanticsModalLogics} we revisit relational and arithmetical semantics for modal provability logics for both the propositional and the predicate logical case. We state the main theorems relating the logics and their semantics. 
All of the results in this section are known, but they are needed for the sequel. The only novel detail is the exposition of $\QRC$ in terms of Kripke sheaves as in \cite{BorgesPhD}.

In Section \ref{section:hierarchyInHA} we prepare for a move to the arithmetical semantics of provability logics for intuitionistic theories by first analyzing the behavior of fragments of the arithmetical hierarchy in the constructive setting.

In Section \ref{section:Solovay} we revisit the main ingredients of the arithmetical completeness proof for provability logic and observe that the needed substitutions are of low and constructively well-behaved complexity.

Sections \ref{section:PLHA+} and \ref{section:QPLplusHA} contain the main results of the paper, providing an axiomatization of the strictly positive fragment of the provability logic of Heyting Arithmetic, both in the propositional and in the quantified case.

Finally, Section \ref{section:InterpolationAndFP} closes off with some observations on the interpolation and the fixpoint theorem for these fragments.

\section{Modal logics and strictly positive fragments}\label{section:ModalLogicsAndStrict}

In this section we introduce the most relevant modal logics for this paper. These are mostly related to the provability logic $\GL$ (see \cite{Boolos1993}). We dwell on both propositional logics and on their corresponding predicate logical versions. For each logic, we define its strictly positive fragment and display some findings from the literature showing how in general a shift from the full logic to its strictly positive fragment comes with a drop in the computational complexity of theoremhood.

\subsection{Propositional modal logics}

The language of propositional modal logic $\Lb$ is obtained from the language of propositional logic by extending it with a unary $\Box$ symbol:
  \begin{equation*}
    \varphi ::= \bot \mid p \mid (\varphi \to \varphi) \mid (\varphi \land \varphi)  \mid (\varphi \lor \varphi)  \mid \Box \varphi
    ,
  \end{equation*}
where $p$ represents a propositional symbol.
  In classical logic, $\bot$ and $\to$ suffice to define $\land$ and $\lor$, but not so in constructive logic. Negation is defined as usual, with $\neg \varphi := \varphi \to \bot$, and the diamond operator is defined as the dual of the box: $\Diamond \varphi := \neg \Box \neg \varphi$.

The only (non-strictly positive) modal logics that we consider in this paper are all normal modal logics. Thus, they are an extension of the basic normal modal logic $\K$.
The modal logic $\K$ has as axioms all propositional logical tautologies in $\Lb$ and all distribution axioms of the form $\Box (\varphi \to \psi) \to (\Box \varphi \to \Box \psi)$. The rules are \emph{modus ponens} and necessitation, to wit, $\K \vdash \varphi \implies \K \vdash \Box \varphi$.

The modal logic $\Kf$ is defined from $\K$ by adding all the transitivity axioms (also called $\four$), which are of the form $\Box \varphi \to \Box \Box \varphi$. The modal logic $\GL$ is defined by adding Löb's axiom scheme $\Box (\Box \varphi \to \varphi) \to \Box \varphi$ to $\Kf$ (or, equivalently, to $\K$).

For a given ordinal $\Lambda$, the modal logic $\GLP_\Lambda$ \cite{Japaridze1988, Beklemishev2005} with language $\Lpoly\Lambda$ has a modality $[\alpha]$ for each $\alpha < \Lambda$ (with corresponding dual $\la \alpha \ra$) and is defined such that each modality $[\alpha]$ behaves like the $\Box$ of $\GL$ and furthermore the following two axiom schemes are available, both with $\beta < \alpha$: monotonicity $[\beta] \varphi \to [\alpha] \varphi$; and negative introspection $\la \beta \ra \varphi \to [\alpha]\la \beta \ra \varphi$.

\subsection{Quantified modal logics}

The language of modal logic can be extended to a quantified (first-order) setting in the natural way. We describe a simple version with no equality nor function symbols, $\Lbf$. Fix a signature $\Sigma$, which in this case is simply a set of relation symbols $\Rel_\Sigma$ and their respective arities (we mostly omit specific mentions of $\Sigma$). Then $\Lbf$ is defined as follows:
\begin{equation*}
  \varphi ::= \bot \mid S(\vec x) \mid (\varphi \to \varphi) \mid (\varphi \land \varphi) \mid (\varphi \lor \varphi) \mid \Forall{x} \varphi \mid \Exists x \varphi \mid \Box \varphi,
\end{equation*}
where $S \in \Rel_\Sigma$ is a relation symbol with arity $n$, $\vec x$ is an $n$-tuple of variables, and $x$ is a variable. Again, $\neg \varphi := \varphi \to \bot$ and $\Diamond \varphi := \neg \Box \neg \varphi$. In classical settings, conjunction, disjunction and existential quantification can be defined from the other connectives.

The free variables of a formula $\varphi$ are defined as usual, and denoted by $\fv(\varphi)$. The expression $\varphi\subst{x}{y}$ denotes the formula $\varphi$ with all free occurrences of the variable $x$ simultaneously replaced by the variable $y$. We say that $y$ is free for $x$ in $\varphi$ if no occurrence of $y$ becomes bound in $\varphi\subst{x}{y}$.

Given a propositional modal logic $L$, one can define its quantified version $\Q L$ as described in \cite{HughesCresswell1996} by adding the following axiom scheme and rule to the first order formulation of $L$: $\Forall{x} \varphi \to \varphi\subst{x}{y}$ with $y$ free for $x$ in $\varphi$ ($\fE$); and $\Q L \vdash \varphi \to \psi \implies \Q L \vdash \varphi \to \Forall x \psi$ with $x \notin \fv(\varphi)$ ($\fI$).
This leads to the straightforward definition of logics such as $\QK$, $\QKf$, and $\QGL$.

As is to be expected, quantified modal logics such as $\QK$ and $\QKf$ are undecidable (they typically contain plain predicate logic). Even small fragments of quantified modal logics are undecidable, such as the fragment with only two variables \cite{Kontchakov2005} and the one with only one monadic predicate symbol and two variables \cite{RybakovShkatov2019}. There are, however, some decidable fragments of these logics discussed in \cite{WolterZakharyaschev2001}. Another decidable fragment is given by the strictly positive quantified modal logic $\QRC$ \cite{QRC1}, further described below.

\subsection{Strictly positive logics}
\label{sec:sp}

Propositional strictly positive formulas (abbreviated p.s.p.~formulas) are built up from $\top$, propositional variables, the binary $\land$ and the unary $\Diamond$ (or $\la \alpha \ra$ in the polymodal setting). Thus, the strictly positive propositional language $\Lbp$ is defined as $\varphi ::= \top \mid p \mid (\varphi \land \varphi) \mid \Diamond \varphi$, and the polymodal version $\Lpolyp\Lambda$ is defined as $\varphi ::= \top \mid p \mid (\varphi \land \varphi) \mid \la \alpha \ra \varphi$, with $\alpha < \Lambda$.
We extend the definition to strictly positive quantified modal formulas over some given relational vocabulary $\Sigma$ as $\varphi ::= \top \mid S(\vec t) \mid (\varphi \land \varphi) \mid \Forall{x} \varphi \mid \Diamond \varphi$, where $S$ is in $\Rel_\Sigma$, and denote this language by $\Lp$. For practical reasons related with the study of $\QRC$, the signature $\Sigma$ may allow for individual constants in the strictly positive case, and so terms $\vec t$ are either variables or constants. We do not work with function symbols.

Note that the above are the only available connectives. The absence of implication and negation does not allow us to define any of the other ones.
The statements of a strictly positive logic are implications between two strictly positive formulas, typically written $\varphi \vdash \psi$ (where $\varphi, \psi$ are strictly positive) instead of $\varphi \to \psi$ to remind ourselves that the implication cannot be nested.

An important propositional strictly positive modal logic in the context of this paper is the Reflection Calculus $\RC_\Lambda$, defined as follows.

\begin{definition}[$\RC_\Lambda$ \cite{Dashkov2012, Beklemishev2012, FernandezDuqueJJJ2014}]
  
  For $\varphi, \psi$ and $\chi$ formulas in the language $\Lpolyp\Lambda$ and $\alpha, \beta < \Lambda$, the axioms and rules of the Reflection Calculus (denoted by $\RC_\Lambda$) are the following:
\begin{enumerate}[label=\upshape(\roman*),ref=\thetheorem.(\roman*)]
    \item $\varphi \vdash_{\RC_\Lambda} \varphi$ and $\varphi \vdash_{\RC_\Lambda} \top$;
    \item $\varphi \land \psi \vdash_{\RC_\Lambda} \varphi$ and $\varphi \land \psi \vdash_{\RC_\Lambda} \psi$ (conjunction elimination);
    \item if $\varphi \vdash_{\RC_\Lambda} \psi$ and $\varphi \vdash_{\RC_\Lambda} \chi$, then $\varphi \vdash_{\RC_\Lambda} \psi \land \chi$ (conjunction introduction);
    \item if $\varphi \vdash_{\RC_\Lambda} \psi$ and $\psi \vdash_{\RC_\Lambda} \chi$, then $\varphi \vdash_{\RC_\Lambda} \chi$ (cut);
    \item if $\varphi \vdash_{\RC_\Lambda} \psi$, then $\la \alpha \ra \varphi \vdash_{\RC_\Lambda} \la \alpha \ra \psi$ (necessitation);
    \item $\la \alpha \ra \la \alpha \ra \varphi \vdash_{\RC_\Lambda}  \la \alpha \ra \varphi$ (transitivity);
    \item $\la \alpha \ra \varphi \vdash_{\RC_\Lambda}  \la \beta \ra \varphi$, for $\alpha > \beta$ (monotonicity);
    \item $\la \alpha \ra \varphi \land \la \beta \ra \psi \vdash_{\RC_\Lambda}  \la \alpha \ra (\varphi \land \la \beta \ra \psi)$ for $\alpha > \beta$ (negative introspection).
  \end{enumerate}
\end{definition}

The logic $\RC_\Lambda$ is also called the strictly positive fragment of $\GLP_\Lambda$ since one can prove that for any p.s.p.~formulas $\varphi$ and $\psi$:
\begin{equation*}
  \GLP_\Lambda \vdash \varphi \to \psi \iff \varphi \vdash_{\RC_{\Lambda}} \psi
  .
\end{equation*}
See \cite{Dashkov2012, Beklemishev2012, FernandezDuqueJJJ2014, BeklemishevFernandezDuqueJJJ2014} for details.

The logic $\RC_1$ has just one modality and is proven in \cite{Dashkov2012} to be the strictly positive fragment of any modal logic between $\Kf$ and $\GL$, inclusive.

In general, when considering a strictly positive logic $P$, one may ask whether there is some modal logic $L$ whose strictly positive fragment $L^+$ coincides with $P$. In that case we would have:
\begin{equation}
\label{eq:Lsp}
  \varphi \vdash_P \psi
  \iff
  L \vdash \varphi \to \psi
  ,
\end{equation}
where $\varphi$ and $\psi$ are strictly positive formulas. In the remainder of this paper, given a logic $L$ we shall denote its strictly positive fragment by $L^+$.

\begin{definition}
For a modal logic $L$ the \emph{strictly positive fragment} $L^+$ of $L$ is defined as:
\begin{equation*}
  L^+ := \{ \la\varphi, \psi\ra \mid L \vdash \varphi \to \psi \text{ and $\varphi$ and $\psi$ are strictly positive formulas} \}.
\end{equation*}
\end{definition}

One can also consider strictly positive fragments for quantified modal logics. Of principal interest in this paper is the Quantified Reflection Calculus with one modality $\QRC$, which is an extension of $\RC_1$.

\begin{definition}[$\QRC$ \cite{QRC1}]
  Let $\varphi$, $\psi$, and $\chi$ be strictly positive formulas in $\Lp$ with signature $\Sigma$. The axioms and rules of $\QRC$ are the following:

\begin{enumerate}[label=\upshape(\roman*),ref=\thetheorem.(\roman*)]
  \item $\varphi \vdash \top$ and $\varphi \vdash \varphi$;
  \item $\varphi \land \psi \vdash \varphi$ and $\varphi \land \psi \vdash \psi$ (conjunction elimination);
  \item if $\varphi \vdash \psi$ and $\varphi \vdash \chi$, then $\varphi \vdash \psi \land \chi$ (conjunction introduction);
  \item if $\varphi \vdash \psi$ and $\psi \vdash \chi$, then $\varphi \vdash \chi$ (cut);\label{rule:cut}
  \item if $\varphi \vdash \psi$, then $\Diamond \varphi \vdash \Diamond \psi$ (necessitation);\label{rule:necessitation}
  \item $\Diamond \Diamond \varphi \vdash \Diamond \varphi$ (transitivity); \label{ax:transitivity}
  \item if $\varphi \vdash \psi$, then $\varphi \vdash \Forall{x} \psi$, for $x \notin \fv(\varphi)$ (quantifier introduction on the right); \label{rule:forallR}
  \item if $\varphi\subst{x}{t} \vdash \psi$, then $\Forall{x} \varphi \vdash \psi$, for $t$ free for $x$ in $\varphi$ (quantifier introduction on the left);\label{rule:forallL}
  \item if $\varphi \vdash \psi$, then $\varphi\subst{x}{t} \vdash \psi\subst{x}{t}$, for $t$ free for $x$ in $\varphi$ and $\psi$ (term instantiation);\label{rule:term_instantiation}
  \item if $\varphi\subst{x}{c} \vdash \psi\subst{x}{c}$, then $\varphi \vdash \psi$, for $c$ not in $\varphi$ nor $\psi$ (constant elimination).\label{rule:constants}
\end{enumerate}
\end{definition}

Analogously to the propositional case, $\QRC$ is the strictly positive fragment of any logic between $\QKf$ and $\QGL$, inclusive, and also the strictly positive fragment of the quantified provability logic of Peano Arithmetic \cite{Escape}.
In this paper, we show that $\QRC$ is the strictly positive fragment of the quantified provability logic of Heyting Arithmetic as well.

Moving from a logic to its strictly positive fragment generally leads to a reduction in the computational complexity of the corresponding decision procedures. Many such reductions are listed in Table~\ref{table:sp}, although the strictness of most reductions depends on complexity theoretical assumptions such as $\PSPACE \neq \PTIME$. Notably, such assumptions are not required in the case of $\QRC$.

\begin{table}[ht]
\centering
\caption{Complexity of some modal logics and their strictly positive counterparts.}
\label{table:sp}
\begin{tabular}{lll|lll}
  \hline
  \multicolumn{3}{c|}{Full logic} & \multicolumn{3}{c}{Strictly positive fragment} \\\hline
  $\K$ & $\PSPACE$-complete & \cite{Ladner1977} & $\K^+$ & $\PTIME$ & \cite{Bek18b, Beklemishev2021_slides} \\
  $\Kf$ and $\GL$ & $\PSPACE$-complete & \cite{Gabbay2003} & $\RC_1$ & $\PTIME$ & \cite{Dashkov2012, Beklemishev2012} \\
  $\Kfthree$ and $\GLthree$ & $\coNP$-complete & \cite{LitakWolter2005} & $\Kfthree^+$ & $\PTIME$ & \cite{Svyatlovskiy2018} \\
  $\GLPzero_\Lambda$ $(\Lambda \geq \omega)$ & $\PSPACE$-complete & \cite{Pakhomov2014} & $\RCzero_\Lambda$ & $\PTIME$ & \cite{Dashkov2012, Beklemishev2012} \\
  $\GLP$ & $\PSPACE$-complete & \cite{Shapirovsky2008} & $\RC$ & $\PTIME$ & \cite{Dashkov2012, Beklemishev2012} \\
  $\T$ & $\PSPACE$-complete & \cite{Ladner1977} & $\{\bm{e}_{refl}\}$ & $\PTIME$ & \cite{Kikot2019}\\
  $\Sfour$ & $\PSPACE$-complete & \cite{Ladner1977} & $\mathcal{E}_{qo}$ & $\PTIME$ & \cite{Kikot2019}\\
  $\Sfourthree$ & $\coNP$-complete & \cite{Gabbay2003} & $\mathcal{E}_{lin}$ & $\PTIME$ & \cite{Kikot2019} \\
  $\Sfive$ & $\coNP$-complete & \cite{Fagin1995} & $\Sfive^+$ & $\PTIME$ & \cite{Svyatlovskiy2018} \\
  $\Sfive_m$ $(m > 1)$ & $\PSPACE$-complete & \cite{Fagin1995} & $\Sfive^+_m$ & $\PTIME$ & \cite{Svyatlovskiy2018} \\
  $\QPL(\PA)$ & $\Pi_2$-complete & \cite{Vardanyan1986} & $\QRC$ & decidable & \cite{Escape}\\
  \hline
\end{tabular}
\end{table}

\section{Semantics for modal logics}\label{section:SemanticsModalLogics}

In this section, we revisit relational and arithmetical semantics for modal provability logics for the propositional and the quantified case and further state the main theorems relating the logics and their semantics.

\subsection{Relational semantics}
We briefly revisit relational semantics (also known as Kripke semantics) for classical modal logics and refer to \cite{HughesCresswell1996} for details. We start with the propositional case.

\begin{definition}[Kripke frame, Kripke model]
\label{def:Kripke_frame_model_Lb}
A Kripke frame is a tuple $\la W, R \ra$ where $W$ is a non-empty set (the worlds) and $R \subseteq W \times W$ is a binary relation (we write $w R u$ instead of $\la w, u \ra \in R$).
  A Kripke model is a Kripke frame together with a valuation function $V : \Prop \to \wp(W)$, determining which propositional symbols hold at which worlds.
\end{definition}

  The truth of a modal formula in a model $\M := \la W, R, V \ra$ at a world $w$ is defined inductively as follows:
\begin{enumerate}[label=\upshape(\roman*),ref=\thetheorem.(\roman*)]
    \item $\M, w \not\Vdash \bot$;
    \item $\M, w \Vdash p$ if and only if $w \in V(p)$, where $p \in \Prop$;
    \item $\M, w \Vdash \varphi \to \psi$ if and only if either $\M, w \not\Vdash \varphi$ or $\M, w \Vdash \psi$;
    \item $\M, w \Vdash \Box \varphi$ if and only if for every $u$ such that $w R u$ we have $\M, u \Vdash \varphi$.
  \end{enumerate}
  If $\M, w \Vdash \varphi$ for every world $w \in W$, we say that $\varphi$ is valid in $\M$ and write $\M \vDash \varphi$.

 We say that a model $\M := \la W, R, V \ra$ is finite if $W$ is finite, that it is transitive if $R$ is transitive, that it is Noetherian if every non empty subset of $W$ has an $R$-maximal element, and that it is tree-like if $R$ is acyclic and there is a root world that is connected to every other world and is not itself a successor of any world. Both $\RC_1$ and $\GL$ are complete with respect to finite transitive Noetherian tree-like models.

\begin{theorem}[\cite{Boolos1993, Dashkov2012}]\label{theorem:modalCompletenessGLandRC}

  Let $\varphi$ be an $\Lb$ formula and $\psi, \chi$ be $\Lbp$ formulas. Let quantifiers over $\M$ range over finite transitive Noetherian tree-like models with root~$r$. Then:
\begin{gather*}
  \GL \vdash \varphi \iff \left( \M, r \Vdash \varphi \text{ for any such } \M \right);\\
  \psi \vdash_{\RC_1} \chi \iff \left(\left(
    \M, r \nVdash \psi \text{ or } \M, r \Vdash \chi
  \right) \text{ for any such } \M \right).
\end{gather*}
\end{theorem}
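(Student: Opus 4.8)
The plan is to treat the two biconditionals separately, deriving the second from the first together with the known characterization of $\RC_1$ as the strictly positive fragment of $\GL$.

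For the first biconditional, the left-to-right (soundness) direction is a routine induction on the length of a $\GL$-derivation: propositional tautologies and the distribution axiom are valid in every Kripke model; the transitivity axiom $\Box\varphi \to \Box\Box\varphi$ is valid in every transitive model; and L\"ob's axiom $\Box(\Box\varphi \to \varphi) \to \Box\varphi$ is valid in every transitive Noetherian model, since if it failed at a world $w$ then the set of $R$-successors of $w$ that refute $\varphi$ would be a non-empty subset of $W$ with no $R$-maximal element. Modus ponens and necessitation preserve validity, so every theorem of $\GL$ is valid at the root of every finite transitive Noetherian tree-like model (restricting the class of models only strengthens this conclusion, so the tree-like requirement is harmless here).

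The right-to-left (completeness) direction is the substantial part and is the classical Segerberg-style argument. Given $\GL \nvdash \varphi$, fix the finite set $\Phi$ of subformulas of $\varphi$, closed under subformulas, and take the worlds to be the maximal $\GL$-consistent subsets of $\Phi \cup \{\neg\psi \mid \psi \in \Phi\}$, with $\Gamma R \Delta$ iff (i) $\Box\chi \in \Gamma$ implies $\chi, \Box\chi \in \Delta$, and (ii) there is some $\Box\chi \in \Delta \setminus \Gamma$. Clause (ii) makes $R$ irreflexive, and since each $R$-step strictly enlarges the finite set of boxed formulas present in a world, $R$ is Noetherian; transitivity of $R$ follows from its definition together with the $\four$ axiom. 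One then proves the Truth Lemma by induction on formula complexity, the critical $\Box$-case using L\"ob's axiom to produce, for each $\Box\chi \notin \Gamma$, a world $\Delta$ with $\Gamma R \Delta$ and $\chi \notin \Delta$. Since $\varphi$ is not a $\GL$-theorem, it is excluded from some maximal consistent $\Gamma_0$, whence $\M, \Gamma_0 \nVdash \varphi$. Finally one unravels the model from $\Gamma_0$ (finitely many finite $R$-paths, by Noetherianity) and takes the transitive closure, obtaining a finite transitive Noetherian tree-like model with root $\Gamma_0$ on which $\varphi$ still fails, by a standard p-morphism/bisimulation argument. The main obstacle is exactly this step: setting up $R$ so that it is simultaneously transitive, irreflexive, and Noetherian, and making the $\Box$-case of the Truth Lemma go through via L\"ob.

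For the second biconditional I would not redo any work. By the result of Dashkov quoted above (\cite{Dashkov2012}), for strictly positive $\psi, \chi$ one has $\psi \vdash_{\RC_1} \chi \iff \GL \vdash \psi \to \chi$. Applying the already-established first biconditional to the $\Lb$-formula $\psi \to \chi$ gives $\GL \vdash \psi \to \chi \iff (\M, r \Vdash \psi \to \chi$ for all such $\M)$, and $\M, r \Vdash \psi \to \chi$ unfolds by definition to ``$\M, r \nVdash \psi$ or $\M, r \Vdash \chi$''. Chaining these two equivalences yields the claim. (One could instead verify soundness of each $\RC_1$ axiom and rule directly against the semantics — e.g. $\Diamond\Diamond\varphi \vdash \Diamond\varphi$ from transitivity of $R$, and $\Diamond\varphi \vdash \Diamond\psi$ from $\varphi \vdash \psi$ by monotonicity of $\Diamond$ — and obtain completeness from the $\GL$ half together with Dashkov's theorem, but routing the whole second biconditional through $\psi \to \chi$ is cleanest.)
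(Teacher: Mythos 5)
Your proposal is correct and follows exactly the route the paper intends: the paper states this theorem without proof, citing \cite{Boolos1993} for the $\GL$ half (the Segerberg/Boolos finite canonical model construction you sketch) and \cite{Dashkov2012} for the reduction $\psi \vdash_{\RC_1} \chi \iff \GL \vdash \psi \to \chi$, which is also quoted explicitly in Section~\ref{sec:sp}. Your soundness argument for Löb's axiom matches the paper's definition of Noetherian (every non-empty subset has an $R$-maximal element), and chaining Dashkov's theorem with the $\GL$ completeness result and the classical clause for $\to$ is precisely how the second biconditional is obtained.
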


There have been several proposals for relational semantics for quantified modal logics, from Kripke \cite{Kripke1963} to many others. Overviews can be found in \cite{HughesCresswell1996, Gabbay2009, Goldblatt2011}.
The idea of the semantics we present is to glue together several first-order models. The precise way in which to do it can vary, due to different ways to interpret the relationship between quantifiers and modal symbols, as well as with the first-order domains. In particular, if different worlds are allowed different domains, there is more than one way of relating those domains with each other. Here we describe Kripke sheaves \cite{Gabbay2009}, which for any worlds $w$ and $u$ such that $u$ is accessible from $w$ includes a compatibility function $\eta_{w, u}$ translating the domain of $w$ into the domain of $u$.

\begin{definition}[Kripke sheaf, first-order Kripke model]
\label{def:Kripke_sheaf}

  A Kripke sheaf $\Sh$ is a tuple $\la W, R,\allowbreak \{M_w\}_{w \in W}, \{\eta_{w, u}\}_{wRu} \ra$ where:
\begin{enumerate}[label=\upshape(\roman*),ref=\thetheorem.(\roman*)]
    \item $W$ is a non-empty set (the set of worlds, where individual worlds are referred to as $w, u, v$, etc);
    \item $R$ is a binary relation on $W$ (the accessibility relation);
    \item each $M_w$ is a non-empty set (the domain of the world $w$, whose elements are referred to as $d, d_0, d_1$, etc);
    \item if $wRu$, then $\eta_{w, u}$ is a function from $M_w$ to $M_u$.
  \end{enumerate}
  Furthermore, we require two conditions on the compatibility functions:

\begin{enumerate}[label=\upshape(\roman*),ref=\thetheorem.(\roman*)]\setcounter{enumi}{4}
    \item if $w R u$, $u R v$, and $w R v$, then for every $d \in M_w$ we have $\eta_{u, v}(\eta_{w,u}(d)) = \eta_{w, v}(d)$;
    \item if $w R w$ then $\eta_{w, w}$ is the identity function on $M_w$.
  \end{enumerate}

  Given a signature $\Sigma$, a first-order Kripke model $\M$ is a tuple $\la \Sh, \{I_w\}_{w \in W}, \{J_w\}_{w \in W} \ra$ where $\Sh := \la W, R,\allowbreak \{M_w\}_{w \in W},\allowbreak \{\eta_{w, u}\}_{wRu} \ra$ is a Kripke sheaf and:
  \begin{itemize}
    \item for each $w \in W$, the interpretation $I_w$ assigns an element of the domain $M_w$ to each constant $c$ of $\Sigma$, written $c^{I_w}$;
    \item $I$ is concordant: if $wRu$, then $c^{I_u} = \eta_{w, u}(c^{I_w})$ for every constant $c$ of $\Sigma$.
    \item for each $w \in W$, the interpretation $J_w$ assigns a set of tuples $S^{J_w} \subseteq \wp((M_w)^n)$ to each $n$-ary relation symbol $S$ of $\Sigma$.
  \end{itemize}
  In this case, we say that the model $\M$ is based on the sheaf $\Sh$.

  We say that a Kripke sheaf is finite if both $W$ and the domains $M_w$ for every $w \in W$ are finite, and that it has a constant domain if all the $M_w$ coincide and all the $\eta_{w, u}$ are the identity function. We say that a model is finite (respectively has a constant domain) if it is based on a finite (respectively constant domain) sheaf.

\end{definition}

In order to define the notion of satisfaction, we need one more ingredient, namely a way to interpret free variables. For this, we use assignments. Given a world $w$, a $w$-assignment $g$ is a function form the set of variables to the domain $M_w$.
Any $w$-assignment can be seen as a $u$-assignment as long as $w R u$, by composing it with $\eta_{w, u}$ on the left. We write $g^u$ to shorten $\eta_{w, u} \circ g$ when $w$ is clear from context.
A $w$-assignment $g$ is extended to terms by defining $g(c) := c^{I_w}$ for any constant $c$. Note that this meshes nicely with the concordance restriction of the model: for any term $t$, if $wRu$ then $g^u(t) = \eta_{w, u}(g(t))$.
Two $w$-assignments $g$ and $h$ are $x$-alternative, written $g \xaltern{x} h$, if they coincide on all variables other than $x$.

We are now ready to define the notion of truth.

\begin{definition}[Truth]
  Let $\M = \la W, R, \{M_w\}_{w \in W},\allowbreak \{\eta_{w, u}\}_{wRu},\allowbreak \{I_w\}_{w \in W}, \{J_w\}_{w \in W} \ra$ be a first-order Kripke model in some signature $\Sigma$, and let $w \in W$ be a world, $g$ be a $w$-assignment, $S$ be an $n$-ary relation symbol, and $\varphi, \psi$ be $\Lbf$-formulas in the language of $\Sigma$.

  We define $\M, w \Vdash^g \varphi$ ($\varphi$ is true at $w$ under $g$) by induction on $\varphi$ as follows:
  \begin{itemize}
    \item $\M, w \not\Vdash^g \bot$;

    \item $\M, w \Vdash^g S(t_0, \hdots, t_{n-1})$ if and only if $\la g(t_0), \hdots, g(t_{n-1}) \ra \in S^{J_w}$;

    \item $\M, w \Vdash^g \varphi \to \psi$ if and only if either $\M, w \not\Vdash^g \varphi$ or $\M, w \Vdash^g \psi$;

    \item $\M, w \Vdash^g \Forall{x} \varphi$ if and only if for all $w$-assignments $h$ such that $h \xaltern{x} g$ we have $\M, w \Vdash^{h} \varphi$;

    \item $\M, w \Vdash^g \Box \varphi$ if and only if for every $u \in W$ such that $w R u$ we have $\M, u \Vdash^{g^u} \varphi$.
  \end{itemize}

  If $\M, w \Vdash^g \varphi$ for every $w$-assignment $g$ we say that $\varphi$ is satisfied at $w$ and write $\M, w \Vdash \varphi$. If $\varphi$ is satisfied at every world $w$, we say that $\varphi$ is valid in $\M$ and write $\M \vDash \varphi$. Finally, if $\varphi$ is valid in every model $\M$ based on a given sheaf $\Sh$, we say that $\varphi$ is valid in $\Sh$ and write $\Sh \vDash \varphi$.
\end{definition}

The soundness of a propositional modal logic can be leveraged to obtain soundness with respect to Kripke sheaves for its quantified counterpart.

\begin{theorem}[Soundness for $\Q L$]
\label{thm:QSsoundness}
Let $\F$ be a Kripke frame for $\Lb$ as described in Definition~\ref{def:Kripke_frame_model_Lb}. If $\F$ satisfies every theorem of a propositional modal logic $L$, then any sheaf based on $\F$ (i.e., with the same set of worlds and accessibility relation) satisfies every theorem of $\Q L$.
\end{theorem}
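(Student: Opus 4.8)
The plan is to proceed by induction on the length of a $\Q L$-derivation of an arbitrary theorem $\varphi$, showing that every sheaf $\Sh$ based on $\F$ satisfies $\varphi$. Fix such a sheaf $\Sh$ and let $\M$ be an arbitrary first-order Kripke model based on $\Sh$ (in some signature $\Sigma$). We must show $\M \vDash \varphi$ for every $\Q L$-theorem $\varphi$ in the language of $\Sigma$. The axioms of $\Q L$ split into three groups: (a) the first-order tautologies and the axioms inherited from $L$ (now read in the first-order modal language); (b) the quantifier axiom $\fE$, i.e.\ $\Forall{x}\varphi \to \varphi\subst{x}{y}$ with $y$ free for $x$; and the rules are \emph{modus ponens}, necessitation, and $\fI$.

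For group (b) and the rule $\fI$, the arguments are entirely standard first-order reasoning about assignments: for $\fE$ one uses that if $\M, w \Vdash^g \Forall{x}\varphi$ and $h \xaltern{x} g$ is the $w$-assignment sending $x$ to $g(y)$, then $\M, w \Vdash^{h}\varphi$, and a substitution lemma (provable by induction on $\varphi$, using that $y$ is free for $x$) identifies $\M, w \Vdash^{h}\varphi$ with $\M, w \Vdash^{g}\varphi\subst{x}{y}$; the side condition on $\fI$ ($x \notin \fv(\varphi)$) guarantees that the truth of $\varphi$ at $w$ under $g$ does not depend on $g(x)$, which suffices to push the universal quantifier inside. \emph{Modus ponens} and necessitation are immediate from the truth clauses, the latter because if $\M \vDash \psi$ then in particular $\M, u \Vdash^{g^u}\psi$ for every $u$ with $wRu$ and every $g$, so $\M, w \Vdash^{g} \Box\psi$.

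The crux is group (a): we must show that each propositional-modal axiom of $L$, when its schematic letters are instantiated by arbitrary $\Lbf$-formulas, is valid in $\M$ — and here is where the hypothesis ``$\F$ satisfies every theorem of $L$'' is used. The key observation is that truth in the first-order model $\M$ reduces, worldwise, to truth in a \emph{propositional} Kripke model on $\F$: given a formula $\varphi$ built from first-order subformulas $\chi_1, \dots, \chi_k$ via the propositional connectives and $\Box$, and given a fixed family of assignments (one $w$-assignment $g_w$ per world, chosen coherently so that $g_u = (g_w)^u$ whenever $wRu$ — which one gets by fixing a $g$ at the relevant starting world and propagating), define a propositional valuation $V$ on $\F$ by putting $w \in V(p_i)$ iff $\M, w \Vdash^{g_w} \chi_i$. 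One then checks, by induction on the propositional structure of $\varphi$, that $\M, w \Vdash^{g_w} \varphi$ iff $\la\F, V\ra, w \Vdash \varphi^{\flat}$, where $\varphi^{\flat}$ is the propositional skeleton of $\varphi$ with $\chi_i$ replaced by $p_i$; the modal clause of this induction matches because the truth clause for $\Box$ in $\M$ quantifies over exactly the $R$-successors, just as in the propositional model, and the coherence $g_u = (g_w)^u$ makes the assignments propagate correctly. Since the instantiated axiom's skeleton is a substitution instance of an $L$-axiom and $\F \vDash L$, it is true at every world of $\la\F,V\ra$, hence the instantiated axiom is true at every world of $\M$ under every assignment, i.e.\ valid in $\M$.

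The main obstacle is exactly this worldwise reduction: one must be careful that a single family $\{g_w\}_{w}$ coherent along $R$ can be used uniformly — this is where the sheaf conditions (functoriality $\eta_{u,v}\circ\eta_{w,u} = \eta_{w,v}$ and $\eta_{w,w} = \mathrm{id}$) and the notation $g^u = \eta_{w,u}\circ g$ do the work, ensuring that propagating an assignment forward is consistent no matter which path one takes. There is also a mild subtlety that the schematic letters of an $L$-axiom may be instantiated by first-order formulas that themselves contain $\Box$ and share subformulas; treating the \emph{maximal} non-propositional subformulas as the atoms $\chi_i$ handles this cleanly. Once the reduction is set up, soundness of each axiom of group (a) is immediate, and the induction closes.
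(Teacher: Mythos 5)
Your overall strategy --- induction on the $\Q L$-derivation, with the axioms inherited from $L$ handled by collapsing the first-order model worldwise to a propositional model on $\F$ and invoking $\F \vDash L$ --- is exactly the intended one; the paper itself only gestures at the proof, deferring to Hughes--Cresswell and to \cite[Theorem~2.5.4]{BorgesPhD}. Your treatment of $\fE$, $\fI$, \emph{modus ponens} and necessitation is fine.

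The gap is in the parenthetical claim that a family of assignments $\{g_w\}_w$ with $g_u = (g_w)^u$ whenever $wRu$ is something ``one gets by fixing a $g$ at the relevant starting world and propagating''. The sheaf conditions only constrain the compatibility functions on triangles $wRu$, $uRv$, $wRv$; they say nothing about two distinct $R$-paths from $w$ to the same world when $w$ does not see that world directly. Concretely, on the frame with $w R a$, $w R b$, $a R u$, $b R u$ (and nothing else) one may have $\eta_{a,u}\circ\eta_{w,a} \neq \eta_{b,u}\circ\eta_{w,b}$, so no coherent family extending $g$ exists and your valuation $V(p_i)$ is not well defined. This is not merely presentational: that frame validates $\Diamond\Diamond p \to \Box\Diamond p$, yet a sheaf with singleton domains at $w,a,b$, with $M_u = \{0,1\}$, $\eta_{a,u}(\ast)=0$, $\eta_{b,u}(\ast)=1$, and $S^{J_u}=\{0\}$ refutes the instance $\Diamond\Diamond S(x) \to \Box\Diamond S(x)$ at $w$; so for non-transitive frames the reduction cannot be repaired (indeed the statement itself needs an extra hypothesis there). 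What saves every application made in the paper is transitivity: when $R$ is transitive, condition (v) applied step by step along a path shows that the composite of the $\eta$'s depends only on the endpoints (it equals $\eta_{w,u}$), propagation is path-independent, and your argument goes through verbatim. You should either assume $R$ transitive (or an equivalent path-coherence condition on the $\eta$'s) at this step, or restrict the group-(a) verification to the concrete axiom schemes of the logics at hand --- e.g.\ for $\K$ the distribution axioms require only one propagation step and hold on any sheaf.
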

\begin{proof}
  By induction on a $\Q L$ proof of a $\Lbf$ formula. The proof is analogous to the one for the non-sheaf quantified Kripke semantics described in \cite{HughesCresswell1996}. See details in \cite[Theorem~2.5.4]{BorgesPhD}.
\end{proof}

As always, completeness has to be independently established for each different logic. Modal completeness for $\QRC$ is proved in \cite{QRC1}, and this result is strengthened to constant-domain completeness in \cite{Escape}.

\begin{theorem}[\protect{\cite[Theorem~5.11]{Escape}}]\label{theorem:QRCcomplete}
  If $\varphi \nvdash_{\QRC} \psi$, then there are a finite Kripke model $\M$ with a constant finite domain where $R$ is irreflexive and transitive, a world $w \in \M$, and a $w$-assignment $g$ such that $\M, w \Vdash^g \varphi$ and $\M, w \nVdash^g \psi$.
\end{theorem}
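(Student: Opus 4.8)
The plan is to derive this constant-domain completeness from the (varying-domain) modal completeness of $\QRC$ proved in \cite{QRC1}, by collapsing all the first-order domains of the resulting countermodel into a single one. Assume $\varphi\nvdash_{\QRC}\psi$. By \cite{QRC1} there are a finite Kripke sheaf model $\M_0$ with $R$ irreflexive and transitive (hence acyclic), a world $w$, and a $w$-assignment $g_0$ with $\M_0, w\Vdash^{g_0}\varphi$ and $\M_0, w\nVdash^{g_0}\psi$; replacing $\M_0$ by the subsheaf it generates from $w$, transitivity gives $wRu$ for every other world $u$, so the compatibility functions $\eta_{w,u}$ are all defined and cohere, $\eta_{u,v}\circ\eta_{w,u}=\eta_{w,v}$ whenever $wRuRv$.

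I would then fix a single finite set $D$ and, for each world $u$, a surjection $p_u\colon D\twoheadrightarrow M_u$ such that the family is coherent ($p_v=\eta_{u,v}\circ p_u$ whenever $uRv$) and $g_0$ factors as $p_w\circ g$ for some $D$-assignment $g$. Over $\la W,R\ra$ one builds the model $\M$ with constant domain $D$, all compatibility functions the identity on $D$, each $n$-ary relation symbol $S$ interpreted at $u$ as $\{\vec d\in D^n : p_u(\vec d)\in S^{J^{\M_0}_u}\}$, and each constant $c$ interpreted uniformly by some element of $p_w^{-1}(c^{I^{\M_0}_w})$ — coherence and the concordance of $\M_0$ make this legitimate. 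A routine induction on strictly positive formulas $\theta$ then proves the truth lemma $\M, u\Vdash^h\theta\iff\M_0, u\Vdash^{p_u\circ h}\theta$ for all worlds $u$ and $D$-assignments $h$: the atomic and conjunction clauses are immediate; the $\forall$-clause uses surjectivity of $p_u$ (so that $h(x)$ ranging over $D$ matches $(p_u\circ h)(x)$ ranging over $M_u$); and the $\Diamond$-clause uses coherence, so that the $\M_0$-assignment reached at an accessible $v$, namely $\eta_{u,v}\circ(p_u\circ h)=p_v\circ h$, is again of the required form. Instantiating at $u=w$, $h=g$ gives $\M, w\Vdash^g\varphi$ and $\M, w\nVdash^g\psi$, and $\M$ is finite, constant-domain, irreflexive and transitive.

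The step carrying the weight is the construction of the common domain $D$ with a coherent surjective family $(p_u)_u$; this is the content of \cite[Theorem~5.11]{Escape}. The obstacle is that coherence forces $p_u=\eta_{w,u}\circ p_w$, so every $\eta_{w,u}$ would need to be surjective, which the countermodel supplied by \cite{QRC1} need not satisfy — and one cannot merely restrict each $M_u$ to $\eta_{w,u}[M_w]$, since shrinking a domain can turn a universal formula true and so destroy the failure of $\psi$ at $w$ (it is, by contrast, harmless for the truth of $\varphi$, universals only getting harder to satisfy in larger domains). What is needed is to first rebuild $\M_0$ as an equivalent countermodel with surjective compatibility maps: roughly, enlarge the thin domains by the finitely many extra elements required to witness the failures of the universal subformulas of $\psi$, adding each new element as a relational clone of an old one so that no universal subformula of $\varphi$ is spoiled, and only then collapse as above. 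One could also sidestep the collapse entirely by redoing the canonical-model construction of \cite{QRC1} over a fixed countable domain of parameters and then filtrating down to a finite submodel, constancy then being automatic; but the truth lemma for $\forall$ and $\Diamond$ — where the quantifier rules of $\QRC$ come in — must be re-established in that setting. Everything else (generated subsheaves, the truth lemma above, and the verification of the frame conditions) is routine.
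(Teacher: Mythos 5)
The paper does not prove this statement at all: Theorem~\ref{theorem:QRCcomplete} is imported wholesale from \cite[Theorem~5.11]{Escape}, so there is no internal argument to compare yours against; your attempt has to be judged on its own terms, and as it stands it is not yet a proof. The collapse along a coherent family of surjections $p_u\colon D\twoheadrightarrow M_u$ is fine, and you correctly isolate the obstruction: coherence forces $p_u=\eta_{w,u}\circ p_w$, so every transition map of the input model must already be surjective, which the countermodel supplied by \cite{QRC1} need not satisfy. But the repair you sketch does not close this gap. If $e\in M_u\setminus\mathrm{im}(\eta_{w,u})$ is needed as a counterexample to a universal subformula of $\psi$ at $u$, you must adjoin a preimage $\hat e$ to $M_w$ with $\eta_{w,u}(\hat e)=e$; declaring $\hat e$ a ``relational clone'' of some old $d\in M_w$ controls only the atomic facts at $w$ (and at worlds off the branch through $u$), not the truth of modal subformulas. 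For a universal subformula $\forall x\,\theta$ of $\varphi$ true at $w$, the new instance $\theta[x\mapsto\hat e]$ may contain some $\Diamond\chi$ whose only witnessing successor for $d$ lies on the branch through $u$, where $\hat e$ is carried to $e$ and its images rather than to the images of $d$; nothing guarantees $\chi$ survives there, so $\varphi$ can be spoiled. In short, the weight-bearing step that you yourself flag as ``the content of \cite[Theorem~5.11]{Escape}'' is exactly the theorem to be proved, and the cloning surgery offered for it is not sound as stated.

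Your alternative route --- redoing the finite/canonical model construction of \cite{QRC1} over a domain of terms fixed in advance, so that constancy of domains is automatic --- is the one that actually carries the day and is essentially how \cite{Escape} proceeds; but, as you note, the $\forall$- and $\Diamond$-clauses of the truth lemma must then be re-established in that setting, and that is where all the work lies. As written, the proposal reduces the theorem either to itself or to an unproved model transformation.
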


\subsection{Arithmetical semantics}

We refer to \cite{HajekPudlak:1993:Metamathematics, Troelstra1973} for definitions of $\PA$ and $\HA$, which are Robinson's arithmetic with full induction over classical or constructive logic, respectively. As always, $\Delta_0$ refers to formulas where all quantifiers are bounded by some term in our language and we set $\Pi_0 := \Sigma_0 := \Delta_0$. The class $\Sigma_{n+1}$ arises from existentially quantifying $\Pi_{n}$ formulas and the class $\Pi_{n+1}$ arises from universally quantifying $\Sigma_{n}$ formulas. Given a class $\Gamma$ (for example $\Sigma_1$) and a theory $T$, we say that a formula $\varphi$ is ``$\Gamma$ over $T$'' if $T$ proves that $\varphi$ is equivalent to $\psi$, for some formula $\psi \in \Gamma$.

The theory $\EA$ is as $\PA$ with the induction axiom restricted to $\Delta_0$ formulas. 
The intuitionistic version of $\EA$ is denoted by $\iEA$ and it only differs from $\EA$ in that it is defined over intuitionistic predicate logic instead of classical predicate logic. A theory is said to be $\Gamma$-sound if all formulas in $\Gamma$ provable in the theory are valid in the standard model $\N$.

Modal logics are related to arithmetic via Gödel numbering and via interpreting the $\Box$ modality as formalized provability. For an arithmetical formula $\varphi$, we denote its Gödel number by $\ulcorner \varphi \urcorner$. We suggestively denote the arithmetical formalized provability predicate by $\Box_T$ so that we have $T\vdash \varphi$ if and only if $\mathbb N \vDash \Box_T(\ulcorner \varphi \urcorner)$. Strictly speaking, an axiomatization $\tau$ of a theory $T$ induces a provability predicate $\Box_\tau$, but we shall often write $\Box_T$ when the intended axiomatization $\tau$ for $T$ is clear from context. We define $\Diamond_\tau \varphi$ as $\neg \Box_\tau \neg \varphi$.

Propositional variables in modal logics are taken to represent placeholders for arbitrary arithmetical formulas. To formalize this idea, we use so-called realizations.

\begin{definition}[Arithmetical realization for $\Lb$, $\cdot\pfs$, $\cdot\pfsT$]
\label{def:propositional:realization}
  An arithmetical realization $\cdot\pfs$ is a function from $\Prop$ to sentences in the language of arithmetic. Given a c.e.~theory $T$ in the language of arithmetic and an arithmetical realization $\cdot\pfs$, we define $\cdot\pfsT$ on modal formulas as follows:
  \begin{itemize}
    \item $\bot\pfsT := (0 = 1)$;
    \item $p\pfsT := p\pfs$, where $p \in \Prop$;
    \item $(\varphi \to \psi)\pfsT := \varphi\pfsT \to \psi\pfsT$;
    \item $(\Box \varphi)\pfsT := \Box_T \varphi \pfsT$.
  \end{itemize}
\end{definition}
In the case of constructive theories such as $\HA$, the definition is adapted in the usual way so that $\cdot\pfsT$ commutes with the connectives $\land$ and $\lor$.

We can now define the provability logic $\PL(T)$ and the strictly positive provability logic $\PL^+(T)$ of a theory $T$ in the language as:
\begin{equation*}
  \PL(T) :=
    \{
      \varphi \in \Lb
      \mid
      \text{for any }\cdot\pfs, \text{ we have } T \vdash \varphi\pfsT
    \},
\end{equation*}
\begin{equation*}
  \PL^+(T) :=
    \{
      \la \varphi, \psi \ra \in \Lbp \times \Lbp
      \mid
      \text{for any }\cdot\pfs, \text{ we have } T \vdash \varphi\pfsT \to \psi\pfsT
    \}.
\end{equation*}
We observe that, in virtue of their definitions, $\PLsp(T) = (\PL (T))^+$.

We can now state that $\PL(T) = \GL$ for a large class of theories $T$. The version for $\PA$ is known as Solovay's completeness theorem.

\begin{theorem}[\cite{Solovay:1976, JonghJumeletMontagna:1991}]
\label{thm:Solovay}
  Let $T$ be a c.e.~$\Sigma_1$-sound extension of $\EA$ and let $\varphi \in \Lb$. Then:
  \begin{equation*}
    \GL \vdash \varphi
    \iff
    \text{for any }\cdot\pfsT, \text{we have }T \vdash \varphi\pfsT
    .
  \end{equation*}
\end{theorem}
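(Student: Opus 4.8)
The plan is to prove the two directions separately, the left-to-right one being soundness and the right-to-left one being Solovay-style arithmetical completeness. For soundness, I would argue by induction on the length of a $\GL$-derivation of $\varphi$, fixing an arbitrary realization $\cdot\pfs$. Propositional tautologies and \emph{modus ponens} are immediate since $\cdot\pfsT$ commutes with the propositional connectives; the distribution axiom $\Box(\psi \to \chi) \to (\Box \psi \to \Box \chi)$ is provable in $T$ because $T$ proves the formalized internal \emph{modus ponens} for $\Box_T$; necessitation is handled by formalized $\Sigma_1$-completeness, i.e.\ $T \vdash \chi\pfsT$ implies $T \vdash \Box_T \chi\pfsT$, which is available because $T \supseteq \EA$; and Löb's axiom $\Box(\Box \psi \to \psi) \to \Box \psi$ follows from the formalized Löb theorem, whose Gödel fixed-point proof can again be carried out in any $T \supseteq \EA$.

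For completeness I would argue by contraposition: assume $\GL \nvdash \varphi$. By Theorem~\ref{theorem:modalCompletenessGLandRC} there is a finite transitive Noetherian tree-like model $\M = \la W, R, V \ra$ with root $r$ such that $\M, r \nVdash \varphi$. I would adjoin a fresh root $0$ below every world of $W$, so that $0\mathbin{R}r$, and relabel $W \cup \{0\}$ as $\{0, 1, \ldots, n\}$. Using the Gödel fixed-point (recursion) theorem inside $T$, I would define a primitive recursive ``Solovay function'' $h \colon \N \to \{0, \ldots, n\}$ with $h(0) = 0$, where $h(k{+}1)$ is either $h(k)$ or an $R$-successor of $h(k)$, and $h$ moves from $w$ to a successor $w'$ precisely upon finding a $T$-proof of $\ell \neq w'$, where $\ell$ denotes the limit of $h$ (expressible because primitive recursion and the relevant $\Sigma_1$-reasoning are available over $\EA$). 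The properties I would establish are: $T$ proves that $\ell$ exists (the tree has finite height, so $h$ moves only finitely often); $T$ proves $(\ell = w) \to \Box_T \bigl(\bigvee_{w R u}(\ell = u)\bigr)$ and $T$ proves $(\ell = w) \to \neg\Box_T(\ell \neq u)$ for each $u$ with $w R u$; and, externally, using $\Sigma_1$-soundness of $T$, the true value of $\ell$ in $\N$ is $0$, since otherwise $h$ would have been triggered to move. Then I would set $p\pfs := \bigvee_{w \,:\, \M, w \Vdash p}(\ell = w)$.

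The core of the argument is the truth lemma: for every $w \in W$ and every subformula $\psi$ of $\varphi$, if $\M, w \Vdash \psi$ then $T \vdash (\ell = w) \to \psi\pfsT$, and if $\M, w \nVdash \psi$ then $T \vdash (\ell = w) \to \neg \psi\pfsT$. This is proved by induction on $\psi$; the atomic and propositional cases are bookkeeping, and the modal case uses exactly the two provable properties of $h$ above to push $\Box_T$ through the disjunction over successors. Applying the truth lemma to $\varphi$ at $r$ gives $T \vdash (\ell = r) \to \neg \varphi\pfsT$. Since $0 \mathbin{R} r$ and $h$ could a priori move from $0$ to $r$, one shows that $T + (\ell = r)$ is consistent (this is where the fixed-point construction, together with $\Sigma_1$-soundness, is used), hence $T \nvdash \varphi\pfsT$, as desired.

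The main obstacle is the self-referential definition of $h$ and the simultaneous verification of its behaviour inside $T$ and in $\N$: keeping the fixed point primitive recursive so that the ambient theory need only extend $\EA$, and carefully isolating the single place where genuine $\Sigma_1$-soundness (rather than mere consistency) of $T$ is invoked. I would not reproduce these routine but delicate verifications, instead rehearsing the construction and citing \cite{Solovay:1976, JonghJumeletMontagna:1991}, noting that the only hypotheses actually used are that $T$ is a c.e.\ extension of $\EA$ (to formalize the recursion, the internal \emph{modus ponens}, $\Sigma_1$-completeness, and Löb's theorem) and that $T$ is $\Sigma_1$-sound (to fix the external limit of the Solovay function).
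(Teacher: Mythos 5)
The paper offers no proof of this theorem: it is imported from the literature with citations to Solovay and to de Jongh--Jumelet--Montagna, and the only ingredient it later recalls is the embedding lemma (Lemma~\ref{lem:embedding}). Your sketch is the standard Solovay argument and matches those ingredients, so it is correct and takes the same route as the cited proofs. One small imprecision: the property $T \vdash (\ell = w) \to \Box_T \bigl(\bigvee_{wRu} (\ell = u)\bigr)$ holds only for $w \neq 0$ (compare item~\ref{embedding:Box} of Lemma~\ref{lem:embedding}, which is restricted to $0 < i \leq N$); this is harmless in your argument because the truth lemma is only invoked at worlds of the original model, but the statement as you wrote it is false at the adjoined root.
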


Strictly positive propositional logics have an alternative arithmetical interpretation where propositional variables are not mapped to sentences but rather to formulas axiomatizing a theory.

\begin{definition}[Infinitary arithmetical realization for $\Lbp$, $\cdot\is$, $\cdot\isT$]
\label{def:propositional:sp:realization}
Let $\cdot\is$ be a function from propositional symbols to axiomatizations of theories of arithmetic. Given a base axiomatization $\tau$, we extend $\cdot\is$ to formulas of $\Lbp$ as follows:
  \begin{itemize}
    \item $\top\isT := \tau(u)$;
    \item $p\isT : = p\is \lor \tau(u)$;
    \item $(\varphi \land \psi)\isT := \varphi\isT \lor \psi\isT$;
    \item $(\Diamond \varphi)\isT := \tau(u) \lor (u = \gnum{\Diamond_{\varphi\isT} \top})$.
  \end{itemize}
\end{definition}

Given these infinitary realizations, we define the reflection logic of a theory:
  \begin{equation}\label{equation:infinitaryRC}
  \IPL(T) := \{
    \la \varphi, \psi \ra \in \Lbp \times \Lbp \mid
    \text{for any }\cdot\is, \text{ we have }
    T \vdash \Forall{\theta}  (\Box_{\psi\isT} \theta \to \Box_{\varphi\isT} \theta)
  \}.
  \end{equation}
Clearly, infinitary realizations extend finitary ones, for given a finitary realization $\cdot\pfs$ we can define $p\is := (u = \gnum{p\pfs})$. Then $p\isT$ axiomatizes the base theory extended by the single axiom $p\pfs$. Using this observation it is not hard to see that $\IPL(\PA) = \RC_1$ and in Section \ref{section:PLHA+} we show that $\IPL(\HA) = \RC_1$ as well.

We extend the notion of (finitary) arithmetical realization to $\Lbf$ in the natural way.
To simplify matters, in the sections dealing with both the quantified modal language and the language of arithmetic, we always use variables named $x_i$ (for some $i$) in the modal setting and variables named $y_i$ or $z_i$ in the arithmetical setting.

\begin{definition}[Arithmetical realization for $\Lbf$ and $\Lp$, $\cdot\fs$, $\cdot\fsT$]
\label{def:QML:realization}
  An arithmetical realization $\cdot\fs$ is a function from relation symbols applied to a tuple of $n$ variables (corresponding to the symbol's arity) to formulas in the language of arithmetic with $n$ free variables. We require that a variable $x_i$ in the modal language be translated to $y_i$ in the arithmetical setting and that a constant $c_i$ in the modal language be translated to $z_i$ in the arithmetical setting.

  Given a c.e.~theory $T$ extending $\iEA$, and an arithmetical realization $\cdot\fs$, we define $\cdot\fsT$ on quantified modal formulas as follows (extendable to the constructive setting as before):
\begin{itemize}
  \item $\bot\fsT := (0 = 1)$;
  \item $S(\vec t)\fsT := S(\vec t)\fs$;
  \item $(\varphi \to \psi)\fsT := \varphi\fsT \to \psi\fsT$;
  \item $(\Box \varphi)\fsT := \Box_T \varphi\fsT$;
  \item $(\Forall{x_k} \varphi)\fsT := \Forall{y_k} \varphi\fsT$.
\end{itemize}
This definition can be restricted to $\Lp$ in the obvious way.
\end{definition}

After Solovay's completeness theorem (Theorem~\ref{thm:Solovay}), it was natural to ask whether one could find a nice characterization for the counterpart of $\PL(\PA)$ in the quantified modal language, defined as follows (see Boolos \cite{Boolos1993}).
\begin{definition}[Quantified provability logic, $\QPL$]
\label{def:QPL}
If $T$ is a c.e.~extension of $\iEA$, the quantified provability logic of $T$ is defined as:\footnote{Recall that we assume all modal variables are of the form $x_i$ and that these are translated to the arithmetical variables $y_i$. We also assume that all modal constants are translated to some $z_i$, but note that constants are not included in the language $\Lbf$, only in $\Lp$. With $\Forall{\vec y} \varphi\fsT$ we mean to quantify over all the free variables of $\varphi\fsT$, and similarly in the $\QPLsp(T)$ case.}
\begin{equation*}
  \QPL(T) :=
    \{
      \varphi \in \Lbf
      \mid
      \text{for any }\cdot\fs, \text{ we have } T \vdash \Forall{\vec y} \varphi\fsT
    \}
    ,
\end{equation*}
\begin{equation*}
  \QPLsp(T) :=
    \{
      \la \varphi, \psi \ra \in \Lp \times \Lp
      \mid
      \text{for any }\cdot\fs, \text{ we have } T \vdash \Forall{\vec y, \vec z} (\varphi\fsT \to \psi\fsT)
    \}
    .
\end{equation*}
\end{definition}

In general, $\QPL(T)$ can be different depending on the chosen axiomatization $\tau$ for $T$ (see \cite{Artemov1986, Kurahashi2013, Kurahashi2022}), but here we assume that an axiomatization is fixed for each relevant theory for simplicity.
It is not exactly the case that $\QPLsp(T) = (\QPL(T))^+$ due to the difference in weather constants are allowed in the language. However, for a fixed signature $\Sigma$ without constants, we indeed have $\QPLsp(T) = (\QPL(T))^+$.

The natural candidate for a quantified provability logic for $\PA$ was $\QGL$, but although $\QGL \subseteq \QPL(\PA)$, Vardanyan \cite{Vardanyan1986} and McGee \cite{McGee1985} independently showed that $\QPL(\PA)$ is as complex as it can possibly be: $\Pi_2$-complete. However, in \cite{Escape} it was proven that the strictly positive fragment of this $\Pi_2$-complete logic is given by $\QRC$. In other words, for the corresponding signatures we have the following result.

\begin{theorem}[\protect{\cite[Theorem~9.1]{Escape}}]\label{theorem:QPLspEqualsQRCOne}
  $\QPLsp(\PA) = \QRC$.
\end{theorem}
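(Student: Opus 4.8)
The plan is to prove $\QPLsp(\PA) = \QRC$ by establishing the two inclusions separately, with soundness being the easy direction and completeness the hard one.

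\medskip

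\noindent\textbf{Soundness ($\QRC \subseteq \QPLsp(\PA)$).} First I would verify that if $\varphi \vdash_{\QRC} \psi$ then $\PA \vdash \Forall{\vec y, \vec z}(\varphi\fsPA \to \psi\fsPA)$ for every arithmetical realization $\cdot\fs$. This is a routine induction on the length of the $\QRC$-derivation: one checks each axiom and rule in turn. For instance, conjunction elimination and introduction are immediate since $\cdot\fsPA$ commutes with $\land$; necessitation uses that $\PA$ proves $\varphi\fsPA \to \psi\fsPA$ implies $\PA \vdash \Box_\PA \varphi\fsPA \to \Box_\PA \psi\fsPA$ by formalized provability reasoning; transitivity $\Diamond\Diamond\varphi \vdash \Diamond\varphi$ corresponds to $\PA \vdash \neg\Box_\PA\Box_\PA\neg\varphi\fsPA \to \neg\Box_\PA\neg\varphi\fsPA$, which follows from provable $\Sigma_1$-completeness (or simply from $\QGL \subseteq \QPL(\PA)$, already noted in the excerpt); and the quantifier rules \ref{rule:forallR}, \ref{rule:forallL}, \ref{rule:term_instantiation} match the corresponding first-order manipulations of the translated formulas, using the convention that $x_i \mapsto y_i$. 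The constant-elimination rule \ref{rule:constants} is handled by observing that a fresh modal constant $c$ translates to a fresh arithmetical variable $z$ not appearing elsewhere, so provability of the instance with $z$ free yields provability of the universal closure. In fact most of this is already implicit in the soundness of $\QRC$ for $\QPL(\PA)$ proved in \cite{Escape}, so I would simply cite that.

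\medskip

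\noindent\textbf{Completeness ($\QPLsp(\PA) \subseteq \QRC$).} This is the substantive direction. I would prove the contrapositive: if $\varphi \nvdash_{\QRC} \psi$, produce a realization $\cdot\fs$ such that $\PA \nvdash \Forall{\vec y, \vec z}(\varphi\fsPA \to \psi\fsPA)$. By the constant-domain completeness of $\QRC$ (Theorem~\ref{theorem:QRCcomplete}), there is a finite Kripke model $\M$ with constant finite domain, irreflexive transitive $R$, a world $w$, and a $w$-assignment $g$ with $\M, w \Vdash^g \varphi$ and $\M, w \nVdash^g \psi$. The task is then to run a Solovay-style argument: simulate the finite Kripke model inside $\PA$ via a Solovay function, and use it to define an arithmetical realization of each relation symbol so that the arithmetical truth of $\chi\fsPA$ at the "real world" tracks the Kripke truth $\M, w \Vdash^g \chi$. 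Since the domain is finite and constant, the relation symbols can be realized by formulas that case-split on the value of the Solovay function and on which domain elements the arguments denote; the key is that the resulting realization makes $\varphi\fsPA$ provable-to-fail-to-imply $\psi\fsPA$. I expect the heavy lifting here to be exactly the content of \cite[Theorem~9.1]{Escape}, so in practice the proof in the paper is just a pointer to that theorem; my reconstruction would be to cite it, noting that the two inclusions together give the equality.

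\medskip

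\noindent\textbf{Main obstacle.} The genuine difficulty is the completeness half: adapting Solovay's embedding of a finite Kripke model into $\PA$ to the \emph{quantified} strictly positive setting, where one must realize relation symbols (not just propositional letters) in a way compatible with the quantifier rules and with a constant domain. The constant-domain requirement from Theorem~\ref{theorem:QRCcomplete} is what makes this feasible — it lets a single fixed finite set of "names" for domain elements be used uniformly across all Solovay-worlds — but verifying that the realization behaves correctly under $\Forall{}$ and under term instantiation, and that non-theoremhood is genuinely witnessed by non-provability in $\PA$ rather than merely by falsity in $\N$, is the delicate part. Everything else (the soundness induction, closing the argument) is routine.
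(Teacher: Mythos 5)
Your proposal matches the paper exactly: the paper gives no proof of this theorem, importing it verbatim as \cite[Theorem~9.1]{Escape}, which is precisely what you conclude one should do. Your sketch of the two inclusions (routine soundness induction; completeness via the constant-domain finite model property of Theorem~\ref{theorem:QRCcomplete} followed by a Solovay-style embedding realizing relation symbols by case-splitting on the Solovay function and on domain elements) is also consistent with the machinery the paper later recalls from \cite{Escape} in Section~\ref{section:QPLplusHA}, namely the realization $\cdot\ofs$ and Theorem~\ref{theorem:specific_finitary_completeness}.
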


In Section \ref{section:QPLplusHA} we prove that likewise  $\QPLsp(\HA) = \QRC$.

The strictly positive fragment $\Lp$ again allows for an infinitary arithmetical interpretation. In that case, a predicate $S(\vec x, \vec c)$ in the signature $\Sigma$ is mapped to a formula $S\is(\vec y, \vec z)$ that axiomatizes a collection of theories parametrized by $\vec y, \vec z$.

\begin{definition}[Infinitary arithmetical realization for $\Lp$, $\cdot\is$, $\cdot\isT$]
\label{def:isT}

Let $\cdot\is$ be a realization such that, for a given $n$-ary predicate $S$ and terms $\vec t$, $S(\vec t)\is$ is an $(n+1)$-ary $\Sigma_1$ arithmetical formula where modal variables $x_k$ are interpreted as $y_k$ and modal constants $c_k$ are interpreted as $z_k$. Let $\tau$ be a $\Sigma_1$ axiomatization of some base theory $T$. We extend this realization to $\QRC$ formulas as follows:
  \begin{itemize}
    \item $\top\isT := \tau(u)$;
    \item $S(\vec{x}, \vec{c})\isT : = S(\vec{x},\vec{c})\is \lor \tau(u)$;
    \item $(\psi \land \delta)\isT := \psi\isT \lor \delta\isT$;
    \item $(\lozenge \psi)\isT := \tau(u) \lor (u = \gnum{\Diamond_{\psi\isT} \top})$;
    \item $(\Forall{x_i} \psi)\isT := \Exists{y_i} \psi\isT$.
  \end{itemize}
\end{definition}

Using the infinitary semantics, we can define the quantified reflection logic $\IQPL(T)$ of a theory $T$ in analogy with \eqref{equation:infinitaryRC} as:
  \begin{equation*}
  \IQPL(T) := \{
    \la \varphi, \psi \ra \in \Lp \times \Lp \mid
    \text{for any }\cdot\is, \
    T \vdash \Forall{\theta} \Forall{\vec{y}, \vec{z}} (\Box_{\psi\isT} \theta \to \Box_{\varphi\isT} \theta)
  \}.
  \end{equation*}
  In \cite[Theorems~7.2 and 8.9]{Escape} it is shown that $\IQPL(\PA) = \QRC$ and in Section~\ref{section:QPLplusHA} we prove that furthermore $\IQPL(\HA) = \QRC$.

\section{The arithmetical hierarchy in \texorpdfstring{$\HA$}{HA}}\label{section:hierarchyInHA}

Heyting Arithmetic does not permit a simple treatment of the arithmetical hierarchy (see e.g.~\cite{FujiwaraKurahashi2023}). In this section we collect some basic observations on approximations to classical reasoning regarding formula complexity, which we use later in the paper.

\subsection{Closure and semi-closure under double negations}

Using standard coding techniques, it is possible to see that $\HA$ proves that  $\Sigma_1$ sentences are closed under conjunction, and this fact is formalizable in $\HA$.
We furthermore recall that for any arithmetical formula $\varphi$ we have:
\begin{equation}
\label{equation:forallNotEquivNotExists}
  \HA \vdash
    \Forall x \neg \varphi \leftrightarrow \neg \Exists x \varphi
    .
\end{equation}
This equivalence can be contrasted with $\HA \nvdash \Exists x \neg \varphi \leftrightarrow \neg \Forall x \varphi$.

Additionally, $\HA$ proves excluded middle for decidable formulas. Consequently, if $\delta$ is a decidable formula, then:
\begin{equation}
\label{equation:excludedMiddleForDecidable}
  \HA \vdash \neg \neg \delta \leftrightarrow \delta.
\end{equation}
In fact, this holds for arithmetical formulas $\delta$ of complexity $\Delta_1$.
However, excluded middle fails to hold for genuine $\Pi_1$ sentences $\pi$, but double negation elimination is still possible:
\begin{equation*}
  \HA \vdash \neg \neg \pi \leftrightarrow \pi.
\end{equation*}
The same equivalence does not hold for genuine $\Sigma_1$ sentences $\sigma$:
\begin{equation*}
  \HA \nvdash \neg \neg \sigma \leftrightarrow \sigma.
\end{equation*}
Even though double negation elimination fails in general for $\Sigma_1$ sentences, we do have a certain formalized version. In Lemma~\ref{theorem:extendedDoubleNegation} we will see that $\HA \vdash \Box_\HA \sigma \leftrightarrow \Box_\HA \neg \neg \sigma$ for $\Sigma_1$ sentences $\sigma$. This is due to a certain amount of conservatism between $\HA$ and $\PA$ that shall be discussed in the next subsection.

The class of $\Sigma_2$ formulas is particularly important for our study. In pure constructive predicate logic, $\Sigma_2$ formulas are provably closed under conjunctions but not under disjunctions. However, in $\HA$ we do have the latter.

\begin{lemma}\label{theorem:SigmaTwoHaClosedUnderDisjunctions}
For each $\varphi_0, \varphi_1 \in \Sigma_2$, there is $\varphi\in \Sigma_2$ such that:
\begin{equation*}
  \HA \vdash \varphi_0 \lor \varphi_1 \leftrightarrow \varphi
  .
\end{equation*}
\end{lemma}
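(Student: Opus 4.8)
The plan is to reduce the statement to the classical fact that $\Sigma_2$ is closed under disjunction together with a coding manoeuvre that absorbs a disjunction under a block of like quantifiers, and then to check that the coding step goes through constructively in $\HA$. Write $\varphi_i$ in the normal form $\varphi_i = \Exists{u_i} \pi_i(u_i)$ with $\pi_i \in \Pi_1$ (where $u_i$ may be a tuple, but by standard pairing we may take it to be a single variable, and $\HA$ proves the relevant pairing equivalences). The goal is then to find $\pi \in \Pi_1$ with $\HA \vdash \Exists{u_0} \pi_0(u_0) \lor \Exists{u_1} \pi_1(u_1) \leftrightarrow \Exists{u} \pi(u)$.

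The natural candidate is $\pi(u) := (\text{$u$ is even} \to \pi_0(u/2)) \land (\text{$u$ is odd} \to \pi_1((u-1)/2))$, using the fact that $\HA$ proves excluded middle for the decidable parity predicate, as recorded in \eqref{equation:excludedMiddleForDecidable}, and that $\Pi_1$ is closed under conjunction and under guarding by a decidable formula (again since $\HA$ decides parity). The right-to-left direction is immediate: given $u$ with $\pi(u)$, decide whether $u$ is even; in the even case we obtain $\pi_0(u/2)$, hence $\Exists{u_0}\pi_0(u_0)$, and symmetrically in the odd case, so in either case the disjunction holds — and here we are using the decidable case split, which is exactly what $\HA$ permits. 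For left-to-right, a witness for $\Exists{u_0}\pi_0(u_0)$ yields $2u_0$ as a witness for $\pi$ (the even conjunct holds by choice, the odd conjunct holds vacuously since $2u_0$ is provably not odd), and a witness $u_1$ yields $2u_1 + 1$; the disjunction in the hypothesis is eliminated constructively into the single existential conclusion with no difficulty.

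The one point that requires a little care, and which I expect to be the main (mild) obstacle, is the claim that in $\HA$ every $\Sigma_2$ formula is provably equivalent to one in the clean prenex shape $\Exists{u}\, \pi(u)$ with $\pi \in \Pi_1$: constructively, prenexing and quantifier-collapsing are not entirely free. However, the definition of $\Sigma_2$ adopted in this section is precisely ``existentially quantify a $\Pi_1$ formula'', so a $\Sigma_2$ formula already has the form $\Exists{\vec u}\,\pi$; what is needed is only that a block of existential quantifiers contracts to one, which holds in $\HA$ by the provable bijection between $\N^k$ and $\N$ (pairing), together with \eqref{equation:forallNotEquivNotExists}-style manipulations that $\HA$ does validate. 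Once this normal form is in hand, the parity-coding argument above closes the proof, and $\varphi := \Exists{u}\,\pi(u)$ is the required $\Sigma_2$ formula.
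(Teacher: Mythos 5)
Your proof is correct and is essentially the same as the paper's: both encode the disjunction into a single existential by packaging a decidable tag together with the witness (you use parity, the paper uses a pair whose first component is $0$ or $1$) and then exploit $\HA$'s excluded middle for the decidable tag to recover the case split. Writing the matrix as a conjunction of implications with decidable antecedents rather than a disjunction of guarded $\Delta_0$ conjuncts is only a cosmetic difference, and your remark about the prenex/quantifier-contraction step is handled identically in the paper via pairing.
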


\begin{proof}
Let $\varphi_0 = \exists s\, \forall t\, \tilde \varphi_0 (s, t)$ and $\varphi_1 = \exists u\, \forall v\, \tilde \varphi_1 (u, v)$,
with $\tilde \varphi_1 (u, v), \tilde \varphi_0 (s, t)\in \Delta_0$. We can take:
\begin{equation*}
  \varphi := \Exists x \Forall y (
    ((x)_0 = 0 \land \tilde \varphi_0 ((x)_1, y))
    \lor
    ((x)_0 = 1 \land \tilde \varphi_1 ((x)_1, y))
  )
  ,
\end{equation*}
where $x$ is being interpreted as a pair, of which $(x)_0$ and $(x)_1$ are the first and second projections, respectively.
\end{proof}

\subsection{Equi-consistency results in \texorpdfstring{$\HA$}{HA}}

When moving between $\PA$ and $\HA$, one conservativity direction is trivial and formalizable in $\HA$:
\begin{equation}\label{equation:HAincludedInPAFormalized}
  \HA \vdash \Forall \varphi (\Box_\HA \varphi \to \Box_\PA \varphi) 
  .
\end{equation}
The other direction is known to hold provably in $\HA$ for $\Pi_2$ sentences (see \cite{Friedman1978}),
both in the non-formalized setting and in the formalized setting, so we have an equivalence:
\begin{equation}\label{equation:provableInHAiffPA}
  \HA \vdash \Forall{\pi \in \Pi_2}
    (\Box_\PA \pi \leftrightarrow \Box_\HA \pi)
  .
\end{equation}
With this conservativity result, we obtain a certain amount of classical behavior for $\Pi_2$ sentences in a formalized setting.
In particular, combining the above we have a $\HA$ provable semi-closure of a certain kind of double negation.

\begin{lemma}\label{theorem:extendedDoubleNegation}
\hfill

  \begin{enumerate}
    \item
      $\HA \vdash
        \Forall{\sigma \in \Sigma_1} (
          \Box_\HA \neg \neg \sigma \leftrightarrow \Box_\HA \sigma
        )
      $;

    \item
      $\HA \vdash
        \Forall{\sigma \in \Sigma_1} (
          \Box_\HA \Forall x \neg \neg \sigma \leftrightarrow \Box_\HA \Forall x \sigma
        )
      $.
\end{enumerate}
\end{lemma}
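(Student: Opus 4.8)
The plan is to prove both items by the same short ``detour through $\PA$'', using the formalized conservativity facts \eqref{equation:HAincludedInPAFormalized} and \eqref{equation:provableInHAiffPA}. The right-to-left implications are immediate and do not use $\sigma \in \Sigma_1$: intuitionistic logic proves $\sigma \to \neg\neg\sigma$, so (formalizably, under a universal quantifier and under a box) $\HA \vdash \Box_\HA(\forall x\, \sigma \to \forall x\, \neg\neg\sigma)$, and formalized distributivity of $\Box_\HA$ over implication gives $\Box_\HA \forall x\, \sigma \to \Box_\HA \forall x\, \neg\neg\sigma$; the case without the quantifier (item~1) is identical. So all the content lies in the left-to-right directions.

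For the left-to-right direction of item~2 (item~1 being the special case of a vacuous quantifier), I would reason inside $\HA$, fixing $\sigma \in \Sigma_1$. First, \eqref{equation:HAincludedInPAFormalized} turns $\Box_\HA \forall x\, \neg\neg\sigma$ into $\Box_\PA \forall x\, \neg\neg\sigma$. Next, since classical logic proves $\neg\neg\sigma \to \sigma$, we get $\Box_\PA(\forall x\, \neg\neg\sigma \to \forall x\, \sigma)$ and hence $\Box_\PA \forall x\, \sigma$; this step happens entirely on the $\PA$ side, where the non-$\Sigma_1$ shape of $\neg\neg\sigma$ does no harm. Finally, because $\sigma \in \Sigma_1$, the sentence $\forall x\, \sigma$ is provably in $\HA$ (uniformly in $\sigma$) equivalent to a $\Pi_2$ sentence, so \eqref{equation:provableInHAiffPA} applies and yields $\Box_\HA \forall x\, \sigma$. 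Chaining the three steps gives $\Box_\HA \forall x\, \neg\neg\sigma \to \Box_\HA \forall x\, \sigma$. For item~1 the same chain works with $\sigma$ in place of $\forall x\, \sigma$, using that every $\Sigma_1$ sentence is provably $\Pi_2$ (adjoin a dummy universal quantifier).

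The main obstacle is bookkeeping rather than mathematical depth: one must check that the argument is uniform in a code for $\sigma$, so that the quantifier $\forall \sigma \in \Sigma_1$ can legitimately be introduced inside $\HA$. Concretely, I would verify that there is a primitive recursive function sending a $\Sigma_1$-code $\gnum{\sigma}$ to a $\Pi_2$-code $\gnum{\pi}$ together with an $\HA$-proof of $\sigma \leftrightarrow \pi$ (and similarly for $\forall x\, \sigma$), and that $\PA$ provably proves $\neg\neg\sigma \to \sigma$ uniformly in $\gnum{\sigma}$ — both routine formalizations of elementary logic. It is worth stressing that the detour through $\PA$ is essential: $\neg\neg\sigma$ is \emph{not} provably $\Sigma_1$ (nor $\Pi_2$) in $\HA$, since that would amount to Markov's principle, so \eqref{equation:provableInHAiffPA} cannot be invoked for $\neg\neg\sigma$ directly; instead one ``launders'' $\neg\neg\sigma$ into $\sigma$ while inside $\PA$ and only re-enters $\HA$ with the genuinely $\Pi_2$ sentence $\forall x\, \sigma$.
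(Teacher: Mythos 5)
Your proof is correct and follows essentially the same route as the paper's: right-to-left from the intuitionistic $\sigma \to \neg\neg\sigma$, and left-to-right via the chain $\Box_\HA \Forall x \neg\neg\sigma \to \Box_\PA \Forall x \neg\neg\sigma \to \Box_\PA \Forall x \sigma \to \Box_\HA \Forall x \sigma$ using \eqref{equation:HAincludedInPAFormalized}, classical double negation elimination inside $\PA$, and \eqref{equation:provableInHAiffPA} for the $\Pi_2$ sentence $\Forall x \sigma$. Your added remarks on uniformity in the code of $\sigma$ and on why the detour through $\PA$ is unavoidable are sensible elaborations of details the paper leaves implicit.
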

\begin{proof}
  Clearly, the first statement follows from the second, and so we only focus on the latter. Reason in $\HA$ and fix an arbitrary $\sigma \in \Sigma_1$. Since clearly $\sigma \to \neg \neg \sigma$, we focus on the other implication. Consider:
\begin{align*}
  \Box_\HA \Forall x \neg \neg \sigma
    &\to \Box_\PA \Forall x \neg \neg \sigma
      &&\text{by \eqref{equation:HAincludedInPAFormalized}}\\
    &\to \Box_\PA \Forall x \sigma
      &&\\
    &\to \Box_\HA \Forall x \sigma
      &&\text{by \eqref{equation:provableInHAiffPA}.}
\end{align*}
\end{proof}

Combining some of the above observations, it is easy to see that the negation of a $\Pi_1$ sentence is equivalent to the double negation of a $\Sigma_1$ sentence over $\HA$.
\begin{lemma}\label{lemma:negationOfPiEquivToNotNotSigma}
  Given a $\Pi_1$ sentence $\pi$, there is a $\Sigma_1$ sentence $\sigma$ such that:
  \begin{equation*}
    \HA \vdash \neg \pi \leftrightarrow \neg \neg \sigma
    .
  \end{equation*}
\end{lemma}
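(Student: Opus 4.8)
The plan is to take $\pi$ in the form $\Forall{x} \delta(x)$ with $\delta$ a $\Delta_0$ (hence decidable) formula — coding a block of universal quantifiers into a single one if necessary — and to set $\sigma := \Exists{x} \neg \delta(x)$. This is a genuine $\Sigma_1$ sentence, since the negation of a $\Delta_0$ formula is again $\Delta_0$, so no appeal to ``$\Sigma_1$ over $\HA$'' is needed. It then remains to establish $\HA \vdash \neg \Forall{x} \delta(x) \leftrightarrow \neg \neg \Exists{x} \neg \delta(x)$, and both directions will reduce to the two facts already recorded above: the equivalence \eqref{equation:forallNotEquivNotExists} (instantiated at $\varphi := \neg \delta$, giving $\HA \vdash \Forall{x} \neg \neg \delta(x) \leftrightarrow \neg \Exists{x} \neg \delta(x)$) and the decidable double-negation law \eqref{equation:excludedMiddleForDecidable} ($\HA \vdash \neg \neg \delta \leftrightarrow \delta$).

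For the left-to-right direction I would reason in $\HA$: assume $\neg \Forall{x} \delta(x)$ and, towards a contradiction, $\neg \Exists{x} \neg \delta(x)$; by \eqref{equation:forallNotEquivNotExists} the latter yields $\Forall{x} \neg \neg \delta(x)$, and \eqref{equation:excludedMiddleForDecidable} converts each $\neg \neg \delta(x)$ into $\delta(x)$, so $\Forall{x} \delta(x)$, contradicting the first assumption; hence $\neg \neg \Exists{x} \neg \delta(x)$. For the right-to-left direction, assume $\neg \neg \Exists{x} \neg \delta(x)$ and, towards a contradiction, $\Forall{x} \delta(x)$; then a fortiori $\Forall{x} \neg \neg \delta(x)$, which by \eqref{equation:forallNotEquivNotExists} is exactly $\neg \Exists{x} \neg \delta(x)$, contradicting the assumption; hence $\neg \Forall{x} \delta(x)$. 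Note that this second implication does not even use the decidability of $\delta$.

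There is no real obstacle here: the argument is short and the only points requiring care are that $\sigma$ is syntactically $\Sigma_1$ (handled, as noted, because $\neg$ of a $\Delta_0$ formula is $\Delta_0$) and that every step is carried out purely intuitionistically, invoking \eqref{equation:excludedMiddleForDecidable} only on the decidable matrix $\delta$ and never on $\pi$ itself.
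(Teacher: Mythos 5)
Your proof is correct and takes essentially the same route as the paper's: you choose the same witness $\sigma := \Exists{x} \neg \delta$ and combine \eqref{equation:forallNotEquivNotExists} (instantiated at $\neg\delta$) with the decidable double-negation law \eqref{equation:excludedMiddleForDecidable}, exactly as the paper does. The only difference is presentational — you unfold the paper's two-line chain of equivalences into explicit intuitionistic contradiction arguments, which is a harmless elaboration.
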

\begin{proof}
  Let $\pi := \Forall x \delta$ where $\delta \in \Delta_0$.
  Consider:
  \begin{align*}
    \HA \vdash \neg \Forall x \delta
      &\leftrightarrow \neg \Forall x \neg \neg \delta
        &&\\
      &\leftrightarrow \neg \neg \Exists x \neg \delta
        &&\text{by \eqref{equation:forallNotEquivNotExists}.}
  \end{align*}
  Then $\sigma := \Exists x \neg \delta$ is the required $\Sigma_1$ sentence.
\end{proof}

We can now prove a dual version of \eqref{equation:provableInHAiffPA}. 

\begin{lemma}
\label{theorem:DiamondSigmaTwoConservativity}
\hfill

  \begin{enumerate}
    \item
      $\HA \vdash
        \Forall{\varphi \in \Sigma_2} (
          \Box_{\HA} \neg \varphi \leftrightarrow \Box_{\PA} \neg \varphi
        )
      $;
    \item
      $\HA \vdash
        \Forall{\varphi \in \Sigma_2} (
          \Diamond_{\HA} \varphi \leftrightarrow \Diamond_{\PA} \varphi
        )
      $.
  \end{enumerate}
\end{lemma}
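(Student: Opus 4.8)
The plan is to derive both statements from the conservativity results already established, reducing the $\Sigma_2$ case to the $\Pi_1$ case via Lemma~\ref{lemma:negationOfPiEquivToNotNotSigma} and the already-proven semi-closure under double negation. First I observe that statement (2) follows immediately from statement (1) by taking negations on both sides (since $\Diamond_T \varphi$ is by definition $\neg \Box_T \neg \varphi$, and the biconditional $\Box_{\HA} \neg \varphi \leftrightarrow \Box_{\PA} \neg \varphi$ yields $\neg \Box_{\HA} \neg \varphi \leftrightarrow \neg \Box_{\PA} \neg \varphi$ in $\HA$), so the work is concentrated in (1).

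For statement (1), I reason in $\HA$ and fix an arbitrary $\varphi \in \Sigma_2$, say $\varphi = \Exists s\, \pi(s)$ with $\pi(s) \in \Pi_1$. The right-to-left direction $\Box_{\PA}\neg\varphi \to \Box_{\HA}\neg\varphi$ needs genuine work, since it is the nontrivial conservativity direction; the left-to-right direction is just \eqref{equation:HAincludedInPAFormalized}. The key move is to rewrite $\neg \varphi$. By \eqref{equation:forallNotEquivNotExists} we have $\HA \vdash \neg \Exists s\, \pi(s) \leftrightarrow \Forall s\, \neg \pi(s)$, and by Lemma~\ref{lemma:negationOfPiEquivToNotNotSigma} (applied uniformly in the parameter $s$) each $\neg \pi(s)$ is $\HA$-equivalent to $\neg \neg \sigma(s)$ for a suitable $\Sigma_1$ formula $\sigma(s)$. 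Hence $\HA \vdash \neg \varphi \leftrightarrow \Forall s\, \neg \neg \sigma(s)$, and this equivalence is itself provable in $\HA$, so it is preserved under $\Box_{\HA}$ and (being a fortiori provable in $\PA$) under $\Box_{\PA}$. Note that $\Forall s\, \neg\neg\sigma(s)$ is $\HA$-provably equivalent to $\neg\neg \Forall s\, \sigma(s)$ is \emph{not} what we want here — rather I keep it in the form $\Forall s\, \neg \neg \sigma(s)$ so that I can apply the double-negation machinery.

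Now chain the implications: $\Box_{\PA} \neg \varphi \to \Box_{\PA} \Forall s\, \neg \neg \sigma(s)$. Since $\Forall s\, \neg\neg\sigma(s)$ is classically (hence $\PA$-provably) equivalent to $\Forall s\, \sigma(s)$, and $\Forall s\, \sigma(s)$ is $\Pi_2$, I get $\Box_{\PA} \Forall s\, \sigma(s)$, whence by \eqref{equation:provableInHAiffPA} I obtain $\Box_{\HA} \Forall s\, \sigma(s)$, and thus $\Box_{\HA} \Forall s\, \neg \neg \sigma(s)$ (since $\sigma(s) \to \neg\neg\sigma(s)$ intuitionistically), which by the equivalence above gives $\Box_{\HA} \neg \varphi$. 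The main obstacle is making the appeal to Lemma~\ref{lemma:negationOfPiEquivToNotNotSigma} and to \eqref{equation:provableInHAiffPA} genuinely uniform in the parameter $s$ — i.e.\ checking that the transformation $\pi(s) \mapsto \sigma(s)$ and the $\Pi_2$-conservativity are available in the formalized $\HA$-provable form with a free variable present. Both are routine given that the constructions in those lemmas are elementary and the formalizations are already asserted to hold with the relevant universal quantifiers; alternatively, one may absorb $s$ into the matrix by noting $\Exists s\,\pi(s)$ is already a single $\Sigma_2$ sentence and working directly with $\neg\varphi \in \Pi_2$-over-$\HA$-after-double-negation, but the parametrized presentation is cleaner. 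A symmetric remark handles the direction $\Box_{\HA}\neg\varphi \to \Box_{\PA}\neg\varphi$ trivially via \eqref{equation:HAincludedInPAFormalized}, completing the proof of (1) and hence of (2).
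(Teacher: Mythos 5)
Your proposal is correct and follows essentially the same route as the paper: write $\varphi = \Exists x\, \pi$ with $\pi \in \Pi_1$, replace $\neg\pi$ by $\neg\neg\sigma$ via Lemma~\ref{lemma:negationOfPiEquivToNotNotSigma} and \eqref{equation:forallNotEquivNotExists}, and transfer $\Box \Forall x\, \sigma$ between $\HA$ and $\PA$ by $\Pi_2$-conservativity \eqref{equation:provableInHAiffPA}. The only cosmetic difference is that you split the biconditional into two implications and inline the content of Lemma~\ref{theorem:extendedDoubleNegation} instead of citing it.
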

\begin{proof}
  The second statement easily follows from the first. Reason in $\HA$ and fix an arbitrary $\varphi \in \Sigma_2$. We can write $\varphi$ as $\Exists x \pi$ for some $\pi \in \Pi_1$. Let $\sigma$ be the $\Sigma_1$ sentence given by Lemma~\eqref{lemma:negationOfPiEquivToNotNotSigma} for our fixed $\pi$ such that:
\begin{equation}
\label{equation:PversusS}
  \Box_{\HA}(\neg \pi \leftrightarrow \neg \neg \sigma)
  .
\end{equation}
Now consider:
\begin{align*}
\Box_\HA \neg \varphi
    &\leftrightarrow \Box_\HA \neg \Exists x \pi
      &&\\
    &\leftrightarrow \Box_\HA \Forall x \neg \pi
      &&\text{by \eqref{equation:forallNotEquivNotExists}}\\
    &\leftrightarrow \Box_\HA \Forall x \neg \neg \sigma
      &&\text{by  \eqref{equation:PversusS}}\\
    &\leftrightarrow \Box_\HA \Forall x \sigma
      &&\text{by Lemma~\ref{theorem:extendedDoubleNegation}}\\
    &\leftrightarrow \Box_\PA \Forall x \sigma
      &&\text{by \eqref{equation:provableInHAiffPA}}\\
    &\leftrightarrow \Box_\PA \neg \neg \Forall x \sigma
      &&\\
    &\leftrightarrow \Box_\PA \neg \Exists x \neg \sigma
      &&\\
    &\leftrightarrow \Box_\PA \neg \Exists x \pi
      &&\text{by \eqref{equation:PversusS}}\\
    &\leftrightarrow \Box_\PA \neg \varphi.
      &&
\end{align*}
\end{proof}

The above lemma is reminiscent of the following result by Kreisel.
\begin{theorem}[Kreisel~\cite{Kreisel1951}]
  For every formula $\varphi$ in prenex normal form we have:
  \begin{equation*}
    \HA \vdash \Box_\PA \neg \varphi \leftrightarrow \Box_\HA \neg \varphi
    .
  \end{equation*}
\end{theorem}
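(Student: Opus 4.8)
The plan is to prove the two implications separately. The direction $\HA \vdash \Box_\PA \neg\varphi \gets \Box_\HA \neg\varphi$ requires nothing about the shape of $\varphi$: it is just the universal instance of \eqref{equation:HAincludedInPAFormalized}. All the work is in the converse $\HA \vdash \Box_\PA \neg\varphi \to \Box_\HA \neg\varphi$, and here I would route through the Gödel--Gentzen negative translation $\chi \mapsto \chi^g$ (double-negating atoms, commuting with $\land$, $\to$, $\forall$, and setting $(A \lor B)^g := \neg(\neg A^g \land \neg B^g)$, $(\Exists x A)^g := \neg \Forall x \neg A^g$). Two facts are needed. The first is the standard \emph{formalized} version of Gödel's negative-interpretation theorem: for every arithmetical sentence $\chi$ one has $\HA \vdash \Box_\PA \chi \leftrightarrow \Box_\HA \chi^g$, provable already in $\iEA$ by formalizing the usual induction over $\PA$-derivations (the $\leftarrow$ half also uses the $\HA$-provable equivalence $\Box_\PA \chi \leftrightarrow \Box_\PA \chi^g$). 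The second is a purely logical lemma.

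The lemma is: \emph{if $\varphi$ is in prenex normal form, then $\HA \vdash \varphi \to \varphi^g$.} I would prove this by induction on the quantifier prefix. In the base case $\varphi$ is $\Delta_0$, hence decidable, so the double negations placed on atoms and the translations of the bounded $\lor$'s and $\exists$'s are all erasable by \eqref{equation:excludedMiddleForDecidable}, giving $\HA \vdash \varphi \leftrightarrow \varphi^g$. For the prefix step, if $\varphi = \Forall x \psi$ then $\varphi^g = \Forall x \psi^g$ and $\HA \vdash \varphi \to \varphi^g$ follows from the induction hypothesis under the quantifier; if $\varphi = \Exists x \psi$ then $\varphi^g = \neg \Forall x \neg \psi^g$, and from a witness for $\Exists x \psi$ the induction hypothesis supplies a witness for $\Exists x \psi^g$, whence $\neg \Forall x \neg \psi^g$. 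Prenexity is exactly what makes this elementary: outside the $\Delta_0$ matrix the formula is built only from $\Forall{}$ and $\Exists{}$, so one never has to cross an unguarded implication, where $\varphi \to \varphi^g$ would require the (intuitionistically unavailable) reverse implication $\psi^g \to \psi$.

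Assembling the pieces: from $\HA \vdash \varphi \to \varphi^g$ we get $\HA \vdash \neg \varphi^g \to \neg \varphi$; since $(\neg\varphi)^g$ is literally $\neg \varphi^g$, provable $\Sigma_1$-completeness (necessitation internalized) plus the $\K$-distribution axiom give $\HA \vdash \Box_\HA (\neg\varphi)^g \to \Box_\HA \neg\varphi$; and the formalized negative-interpretation theorem applied to the sentence $\chi := \neg\varphi$ rewrites the antecedent as $\Box_\PA \neg\varphi$. This yields $\HA \vdash \Box_\PA \neg\varphi \to \Box_\HA \neg\varphi$, and together with the trivial converse we obtain the stated equivalence.

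The main obstacle I expect is bookkeeping rather than a single hard step: one must pin down a version of $\cdot^g$ for which both $\HA \vdash \Box_\PA \chi \leftrightarrow \Box_\HA \chi^g$ and the prenex lemma $\HA \vdash \varphi \to \varphi^g$ go through together, and one must check the $\Delta_0$ base case with care (bounded quantifiers and disjunctions inside the matrix). A more self-contained alternative would mimic the proof of Lemma~\ref{theorem:DiamondSigmaTwoConservativity}, inducting directly on the quantifier complexity of $\varphi$ and pushing negations inward with \eqref{equation:forallNotEquivNotExists} together with Lemmas~\ref{theorem:extendedDoubleNegation} and \ref{lemma:negationOfPiEquivToNotNotSigma}; but the case in which $\varphi$ begins with a universal quantifier does not simplify under negation, so that route still ultimately needs the negative-translation input (or an application of the $\Pi_2$-conservativity \eqref{equation:provableInHAiffPA} after reducing $\neg\varphi$ to a double negation of a sufficiently low-complexity formula).
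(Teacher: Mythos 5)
Your proof is correct and is essentially the paper's own argument: the paper also routes through a double‑negation translation (Kuroda's, which inserts $\neg\neg$ after universal quantifiers, rather than the Gödel--Gentzen translation) and relies on exactly your key observation that a prenex formula intuitionistically implies its own translation, so that contraposing under $\Box_\HA$ converts $\Box_\HA (\neg\varphi)^g$ into $\Box_\HA \neg\varphi$. The two translations are $\HA$-provably equivalent here, so the difference is purely cosmetic.
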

\begin{proof}
The right to left implication is is clear, so we focus on the other direction. Reason in $\HA$ and assume:
\begin{equation*}
  \Box_\PA \neg \forall x_1 \exists y_1 \forall x_2 \exists y_2 \cdots \delta
  ,
\end{equation*}
taking $\varphi$ as $\forall x_1 \exists y_1 \forall x_2 \exists y_2 \cdots \delta$ for some $\Delta_0$-formula $\delta$, where the first universal quantification may be vacuous. By the Kuroda translation \cite{Kuroda1951}, we deduce:
  \begin{equation*}
    \Box_\HA \neg \forall x_1 \neg \neg \exists y_1 \forall x_2 \neg \neg \exists y_2 \cdots \delta
    .
  \end{equation*}
  Since $\HA \vdash \forall x_1 \exists y_1 \forall x_2 \exists y_2 \cdots \delta \to \forall x_1 \neg \neg \exists y_1 \forall x_2 \neg \neg \exists y_2 \cdots \delta$, we obtain:
  \begin{equation*}
    \Box_\HA \neg \forall x_1  \exists y_1 \forall x_2  \exists y_2 \cdots \delta
    ,
  \end{equation*}
that is, $\Box_\HA \neg \varphi$, as desired.
\end{proof}

From Kreisel's observation we directly obtain the following corollary.

\begin{corollary}
For every formula $\varphi$ in prenex normal form we have:
\begin{equation*}
  \HA \vdash \Diamond_\PA  \varphi \leftrightarrow \Diamond_\HA  \varphi.
\end{equation*}
\end{corollary}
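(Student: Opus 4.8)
The plan is to obtain the corollary directly from Kreisel's theorem by dualizing. Recall that, by definition, $\Diamond_\tau \psi$ abbreviates $\neg \Box_\tau \neg \psi$; in particular $\Diamond_\PA \varphi = \neg \Box_\PA \neg \varphi$ and $\Diamond_\HA \varphi = \neg \Box_\HA \neg \varphi$. Kreisel's theorem, applied to the given prenex formula $\varphi$, yields $\HA \vdash \Box_\PA \neg \varphi \leftrightarrow \Box_\HA \neg \varphi$. Since the inference from $A \leftrightarrow B$ to $\neg A \leftrightarrow \neg B$ is intuitionistically valid, and $\HA$ is closed under it, we get $\HA \vdash \neg \Box_\PA \neg \varphi \leftrightarrow \neg \Box_\HA \neg \varphi$, which unfolds to exactly $\HA \vdash \Diamond_\PA \varphi \leftrightarrow \Diamond_\HA \varphi$.

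I do not expect any real obstacle here. The one point worth noting is that Kreisel's theorem is invoked on the very same $\varphi$ (not on $\neg\varphi$), so no passage back to prenex normal form is needed, and the contraposition step used is constructively sound. Thus the corollary needs nothing beyond unfolding the definition of $\Diamond$ and a single application of the preceding theorem.
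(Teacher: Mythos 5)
Your proof is correct and is exactly the argument the paper intends: the corollary is stated as an immediate consequence of Kreisel's theorem, obtained by unfolding $\Diamond_\tau \varphi := \neg \Box_\tau \neg \varphi$ and applying the intuitionistically valid inference from $A \leftrightarrow B$ to $\neg A \leftrightarrow \neg B$. Nothing further is needed.
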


\section{Reflections on arithmetical completeness}\label{section:Solovay}

In this section we briefly revisit the ingredients of Solovay's arithmetical completeness proof for $\GL$.
We have a particular interest in the complexity of the formulas occurring in that proof so that we can invoke conservativity of $\PA$ over $\HA$. We first observe that the complexity of the arithmetical realizations of propositional strictly positive formulas can be kept low.

\begin{lemma}\label{lemma:boundedComplexityRealisations}
  For every realization $\cdot\pfs : \Prop \to \Sigma_2$, every c.e.~theory $T$ and every p.s.p.~formula $\varphi$ we have that $\varphi\pfsT$ is $\Sigma_2$ over $\HA$, and this is provable in $\HA$.
\end{lemma}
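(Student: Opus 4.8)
The plan is to induct on the structure of the p.s.p.\ formula $\varphi$, using the definitions of $\top\pfsT$, $p\pfsT$, $(\varphi \land \psi)\pfsT$ and $(\Diamond \varphi)\pfsT$. Wait — actually looking at the definitions, for the propositional strictly positive language $\Lbp$ the realization is the \emph{infinitary} one, $\cdot\is$, but here we have $\cdot\pfs : \Prop \to \Sigma_2$ and the finitary $\cdot\pfsT$. For p.s.p.\ formulas the connectives are $\top$, $p$, $\land$, $\Diamond$. Under the finitary reading, $\top\pfsT$ should be $(0=0)$ (or whatever the convention is), $p\pfsT = p\pfs \in \Sigma_2$ by hypothesis, $(\varphi\land\psi)\pfsT = \varphi\pfsT \land \psi\pfsT$, and $(\Diamond\varphi)\pfsT = \Diamond_T \varphi\pfsT = \neg\Box_T\neg\varphi\pfsT$.

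\begin{proof}
  We argue by induction on the p.s.p.\ formula $\varphi$, reasoning inside $\HA$. The atomic cases are immediate: $\top\pfsT$ is $(0 = 0)$, which is $\Delta_0$ and hence $\Sigma_2$, and $p\pfsT = p\pfs$ is $\Sigma_2$ by the hypothesis on $\cdot\pfs$. For the conjunction case, if $\varphi\pfsT$ and $\psi\pfsT$ are each provably $\Sigma_2$ over $\HA$, then $(\varphi \land \psi)\pfsT = \varphi\pfsT \land \psi\pfsT$ is provably equivalent over $\HA$ to a conjunction of two $\Sigma_2$ formulas; pushing the two outermost existential quantifiers together via a pairing function and then the two universal quantifiers together likewise, we obtain an honest $\Sigma_2$ formula, and this manipulation is available in $\HA$ by the same coding arguments invoked before Lemma~\ref{theorem:SigmaTwoHaClosedUnderDisjunctions}.

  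The key case is the diamond. Here $(\Diamond \varphi)\pfsT = \neg \Box_T \neg \varphi\pfsT$. By the induction hypothesis, $\HA$ proves that $\varphi\pfsT$ is equivalent to some $\Sigma_2$ formula $\exists s\, \forall t\, \delta(s,t)$ with $\delta \in \Delta_0$, so $\neg \varphi\pfsT$ is $\HA$-equivalent to $\forall s\, \neg \forall t\, \delta(s,t)$, i.e.\ to $\forall s\, \exists t\, \neg\delta(s,t)$ using \eqref{equation:forallNotEquivNotExists}; thus $\neg\varphi\pfsT$ is provably $\Pi_2$, and a fortiori $\Sigma_2$. The predicate $\Box_T(\cdot)$ is $\Sigma_1$, so $\Box_T \neg \varphi\pfsT$ is $\Sigma_1$ (the provability predicate applied to the Gödel number of the relevant sentence), hence its negation $\neg \Box_T \neg \varphi\pfsT$ is $\Pi_1$, which is again $\Sigma_2$. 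All of these equivalences — the internalization of the induction hypothesis under the box, the $\Sigma_1$-ness of $\Box_T$, and the closure manipulations — are themselves formalizable in $\HA$, which is the content of the final clause of the statement.

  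The main obstacle is making precise the interplay between the internal (formalized) and external levels: the statement ``$\varphi\pfsT$ is $\Sigma_2$ over $\HA$, provably in $\HA$'' is really a statement about a formalized provability predicate, so one must be careful that the induction hypothesis is itself stated in the formalized form and that applying $\Box_T$ to a sentence provably equivalent to a $\Sigma_2$ sentence yields, provably in $\HA$, a $\Sigma_1$ sentence. This is routine given standard formalized properties of $\Box_T$ together with the $\HA$-provable closure facts for $\Sigma_1$ and $\Sigma_2$ collected at the start of Section~\ref{section:hierarchyInHA}, but it requires stating everything at the right level of formalization.
\end{proof}
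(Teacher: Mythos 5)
Your proof is correct in its essentials and takes the same route as the paper: induction on $\varphi$, with the only interesting case being $\Diamond\psi$, where the point is that $\Box_T$ applied to any Gödel number is $\Sigma_1$, so $\neg\Box_T\neg\psi\pfsT$ is $\Pi_1$ via \eqref{equation:forallNotEquivNotExists} and hence $\Sigma_2$.

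One caveat: your detour computing the complexity of $\neg\varphi\pfsT$ is both unnecessary and, as written, incorrect over $\HA$. You pass from $\neg\Forall{t}\delta(s,t)$ to $\Exists{t}\neg\delta(s,t)$ ``using \eqref{equation:forallNotEquivNotExists}'', but that equation only gives $\Forall{x}\neg\varphi \leftrightarrow \neg\Exists{x}\varphi$; the direction you need is essentially Markov's principle, and the paper explicitly notes that $\HA \nvdash \Exists{x}\neg\varphi \leftrightarrow \neg\Forall{x}\varphi$. Fortunately the complexity of the formula \emph{under} the box is irrelevant — only the $\Sigma_1$-ness of the provability predicate itself matters — so deleting that paragraph leaves a sound proof matching the paper's.
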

\begin{proof}
By an easy induction on the complexity of $\varphi$. The only interesting case is the complexity of $\Diamond \psi$, which is taken care of by \eqref{equation:forallNotEquivNotExists} since the negation of a $\Sigma_1$ sentence is $\Pi_1$, whence also $\Sigma_2$.
\end{proof}

This simple observation can be used to see that extending a realization using $\HA$ provability or $\PA$ provability yields $\HA$-equivalent formulas, provably in $\HA$.

\begin{lemma}\label{theorem:HARealisationEquivalentToPA}
  For every realization $\cdot\pfs : \Prop \to \Sigma_2$ and every p.s.p.~formula $\varphi$ we have:
  \begin{equation*}
    \HA \vdash \varphi\pfsHA \leftrightarrow \varphi\pfsPA.
  \end{equation*}
\end{lemma}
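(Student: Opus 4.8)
The plan is to prove the equivalence $\HA \vdash \varphi\pfsHA \leftrightarrow \varphi\pfsPA$ by induction on the structure of the p.s.p.\ formula $\varphi$. Recall that $\pfsHA$ and $\pfsPA$ differ only in how the $\Box$ modality is realized (as $\Box_\HA$ versus $\Box_\PA$), and that both readings commute with $\land$ in the constructive setting. The base case $\varphi = \top$ is trivial since $\top\pfsHA = (0=1 \to 0 = 1) = \top\pfsPA$ (or however $\top$ is fixed), and the case $\varphi = p$ is also immediate because $p\pfsHA = p\pfs = p\pfsPA$, the realization of a propositional variable being a fixed sentence that does not mention provability. The conjunction case $\varphi = \psi \land \chi$ follows directly from the induction hypothesis, since both readings distribute over $\land$ and $\HA$ proves $(\psi\pfsHA \leftrightarrow \psi\pfsPA) \land (\chi\pfsHA \leftrightarrow \chi\pfsPA) \to (\psi\pfsHA \land \chi\pfsHA \leftrightarrow \psi\pfsPA \land \chi\pfsPA)$.

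The only real work is in the modal case $\varphi = \Diamond \psi$. Here $\varphi\pfsHA = \Diamond_\HA \psi\pfsHA = \neg \Box_\HA \neg \psi\pfsHA$ and $\varphi\pfsPA = \Diamond_\PA \psi\pfsPA = \neg \Box_\PA \neg \psi\pfsPA$. By Lemma~\ref{lemma:boundedComplexityRealisations}, both $\psi\pfsHA$ and $\psi\pfsPA$ are $\Sigma_2$ over $\HA$, provably in $\HA$. By the induction hypothesis, $\HA \vdash \psi\pfsHA \leftrightarrow \psi\pfsPA$, and hence (formalizing this derivation) $\HA \vdash \Box_\HA(\psi\pfsHA \leftrightarrow \psi\pfsPA)$, so that $\HA \vdash \Box_\HA \neg \psi\pfsHA \leftrightarrow \Box_\HA \neg \psi\pfsPA$ and likewise over $\PA$. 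Thus it suffices to show $\HA \vdash \Box_\HA \neg \psi\pfsPA \leftrightarrow \Box_\PA \neg \psi\pfsPA$. But $\psi\pfsPA$ is $\Sigma_2$ over $\HA$ (provably so), and this is exactly the content of Lemma~\ref{theorem:DiamondSigmaTwoConservativity}(1): $\HA \vdash \Forall{\chi \in \Sigma_2}(\Box_\HA \neg \chi \leftrightarrow \Box_\PA \neg \chi)$. Negating both sides of the resulting equivalence gives $\HA \vdash \Diamond_\HA \psi\pfsHA \leftrightarrow \Diamond_\PA \psi\pfsPA$, as desired.

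I would write the modal case as a short chain of equivalences reasoned inside $\HA$:
\begin{align*}
  \Diamond_\HA \psi\pfsHA
    &\leftrightarrow \neg \Box_\HA \neg \psi\pfsHA
      &&\\
    &\leftrightarrow \neg \Box_\HA \neg \psi\pfsPA
      &&\text{by IH, formalized}\\
    &\leftrightarrow \neg \Box_\PA \neg \psi\pfsPA
      &&\text{by Lemma~\ref{theorem:DiamondSigmaTwoConservativity} and Lemma~\ref{lemma:boundedComplexityRealisations}}\\
    &\leftrightarrow \Diamond_\PA \psi\pfsPA.
      &&
\end{align*}
The main obstacle, such as it is, is bookkeeping: one must be careful that the conservativity lemma applies because $\psi\pfsPA$ is genuinely $\Sigma_2$ over $\HA$ (a fact that itself needs to be formalized to get the $\HA$-provable equivalence of boxes), and one must make sure the induction hypothesis is invoked under a box, which requires the provable-$\HA$ strength of the inductive claim (this is why the statement says ``provably in $\HA$'' in the relevant lemmas, and why the induction goes through). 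There is no genuine mathematical difficulty beyond assembling these pieces.
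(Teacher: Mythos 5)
Your proof is correct and follows essentially the same route as the paper: an external induction on $\varphi$ in which the only nontrivial case is $\Diamond\psi$, settled by combining the $\Sigma_2$ complexity bound of Lemma~\ref{lemma:boundedComplexityRealisations} with the conservativity result of Lemma~\ref{theorem:DiamondSigmaTwoConservativity}. The bookkeeping points you flag (formalizing the induction hypothesis under the box, and the $\HA$-provable $\Sigma_2$-equivalence needed to invoke the conservativity lemma) are exactly the details the paper's one-line proof leaves implicit.
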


\begin{proof}
By an easy induction on $\varphi$ where the $\Diamond$ case is taken care of by Lemma~\ref{theorem:DiamondSigmaTwoConservativity}.
\end{proof}

Solovay proved the arithmetical completeness theorem for $\GL$ by embedding Kripke frames into arithmetic. Given a Kripke frame with worlds $1, \ldots, N$, a new world $0$ is added such that it can see all the other worlds. Then, each $i \in \{0, 1, \hdots, N\}$ is assigned an arithmetical sentence $\lambda_i$ such that each $\lambda_i$ is independent from $\PA$ and that these $\lambda_i$ sentences mimic the accessibility relations between their respective worlds as described by the following lemma.
Then the realization $\cdot\pfs$ of Solovay's proof is defined as $p\pfs := \bigvee_{i \Vdash p}\lambda_i$.

\begin{lemma}[Embedding \cite{Boolos1993}]
\label{lem:embedding}
\
\begin{enumerate}[ref=\thetheorem.\arabic*]
  \item\label{embedding:SomeOfAll}
    $\PA \vdash \bigvee_{i \leq N} \lambda_i$;

  \item\label{embedding:OneOfAll}
    $\PA \vdash \lambda_i \to \bigwedge_{j \leq N, j \neq i} \neg \lambda_j$, for $i \leq N$;

  \item\label{embedding:Diamond}
    $\PA \vdash \lambda_i \to \bigwedge_{j \leq N, iRj} \Diamond_\PA \lambda_j$, for $i \leq N$;

  \item\label{embedding:Box}
    $\PA \vdash \lambda_i \to  \Box_\PA \bigvee_{j \leq N, iRj} \lambda_j$, for $0 < i \leq N$;

  \item\label{embedding:Lambda0istrue}
    $\N \vDash \lambda_0$.
\end{enumerate}
\end{lemma}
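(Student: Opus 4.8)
The plan is to reconstruct the sentences $\lambda_i$ by means of Solovay's self-referential function (the construction is laid out in full in \cite{Boolos1993}). Working over $\PA$ and using the arithmetical fixed-point lemma, I would define simultaneously a primitive recursive function $h \colon \N \to \{0, 1, \dots, N\}$ and the sentences $\lambda_i := \exists s\, \forall t \geq s\, (h(t) = i)$, by stipulating that $h(0) = 0$, that $h(t+1) = j$ for the least $j$ with $h(t) R j$ such that some $\PA$-proof of $\neg\lambda_j$ has code at most $t$ whenever such a $j$ exists, and that $h(t+1) = h(t)$ otherwise. The guard in this recursion is a bounded, hence decidable, condition on $t$, so the definition is legitimate despite being self-referential through the $\lambda_j$; and Solovay's realization $p\pfs := \bigvee_{i \Vdash p} \lambda_i$ is then built from these.

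Granting this, the five clauses follow from the elementary dynamics of $h$, all formalizable in $\PA$ (and in much weaker theories) save for the last. Since $R$ restricted to $\{0, \dots, N\}$ is transitive and Noetherian there is no infinite $R$-chain, so $h$ changes value at most $N$ times and is therefore eventually constant; $\PA$ proves this, which gives~\ref{embedding:SomeOfAll}, and uniqueness of the limit gives~\ref{embedding:OneOfAll}. For~\ref{embedding:Diamond} I would argue inside $\PA$: assuming $\lambda_i$ and $iRj$, if there were a $\PA$-proof of $\neg\lambda_j$ then, choosing a stage $t$ that both lies past the point where $h$ has stabilised at $i$ and bounds the code of that proof, the recursion forces $h(t+1) \neq i$, contradicting $\lambda_i$; hence $\Diamond_\PA \lambda_j$.

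Clause~\ref{embedding:Box} is the delicate one and the part I expect to take most care over. Its engine is that for $i \geq 1$ the function $h$ can arrive at $i$ only via a step that was itself triggered by an already-found $\PA$-proof of $\neg\lambda_i$; formalising this and pushing the resulting witnessed bounded statement under the box by provable $\Sigma_1$-completeness gives $\PA \vdash \lambda_i \to \Box_\PA \neg\lambda_i$. On the other hand, once $h$ has visited $i$ it stays at $i$ or climbs to $R$-successors of $i$, and $R$-successors of those are again $R$-successors of $i$ by transitivity, so $\PA \vdash \lambda_i \to \Box_\PA(\lambda_i \vee \bigvee_{iRj} \lambda_j)$; conjoining the two displayed facts yields $\PA \vdash \lambda_i \to \Box_\PA \bigvee_{iRj} \lambda_j$. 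Finally, for~\ref{embedding:Lambda0istrue}: were $h$ to stabilise at some $\ell \geq 1$ in the standard model, then $\lambda_\ell$ would be true while, by the first fact just proved, $\Box_\PA \neg\lambda_\ell$ would hold, i.e.\ $\PA$ would prove the false sentence $\neg\lambda_\ell$, contradicting the soundness of $\PA$; so $h$ stabilises at $0$ and $\N \vDash \lambda_0$. (If one wishes to weaken this to the $\Sigma_1$-soundness used in Theorem~\ref{thm:Solovay}, a finer variant of the construction is required; see \cite{JonghJumeletMontagna:1991}.)

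The main obstacle is bookkeeping rather than conceptual: I would need to check that each informal dynamical claim about $h$ — that it stabilises, that a move into a world $i \geq 1$ is accompanied by a found refutation of $\lambda_i$, that $h$ never descends below a world it has visited — is in fact a theorem of $\PA$, which is routine but fiddly in the presence of the self-reference, and to keep the one genuinely semantic ingredient, soundness of $\PA$, confined to~\ref{embedding:Lambda0istrue}. I would also record in passing that each $\lambda_i$ is $\Sigma_2$ (indeed Boolean over $\Sigma_1$); as the surrounding sections emphasise, it is this low complexity that later lets the whole construction be transferred between $\PA$ and $\HA$.
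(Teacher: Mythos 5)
Your proposal is correct and is precisely the standard Solovay construction that the paper itself does not reprove but simply cites from \cite{Boolos1993}: the self-referential limit function defined via the fixed-point lemma, the eventual-constancy and uniqueness arguments for the first two clauses, the proof-code/stabilisation clash for the third, the conjunction of $\Box_\PA\neg\lambda_i$ with $\Box_\PA(\lambda_i\lor\bigvee_{iRj}\lambda_j)$ for the fourth, and soundness for the fifth (with the correct pointer to \cite{JonghJumeletMontagna:1991} for the $\Sigma_1$-sound refinement). Your closing remark on complexity also matches what the paper records in \eqref{equation:SolovayFunctionLow}, namely that the monotone dynamics of the function let each $\lambda_i$ be written as a conjunction of a $\Sigma_1$ and a $\Pi_1$ sentence, which is the form actually needed later.
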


The sentence $\lambda_i$ states that the limit of a so-called Solovay function $f : \mathbb N \to \{0, 1, \hdots, N\}$ is $i$. Each sentence is thus \emph{a priori} of complexity $\Sigma_2$. However, by definition, the Solovay function can only move along $R$-transitions in the frame, so we can write $\lambda_i$ as a conjunction of $\Pi_1$ and $\Sigma_1$ sentences as follows:
\begin{equation}\label{equation:SolovayFunctionLow}
  \lambda_i =
    (\Exists{x} f(x) = i)
    \land
    \Forall{x, y} (x \leq y \to (f(x) = i \to f(y) = i))
  .
\end{equation}

We denote the class of formulas built of conjunctions and disjunctions of $\Pi_1$ and $\Sigma_1$ formulas by $\DCone$.
Since in Solovay's proof $p\pfs := \bigvee_{i\Vdash p}\lambda_i$, it is clear that the realization satisfies $\cdot\pfs : \Prop \to \DCone$. In fact, $\varphi\pfsT$ still has complexity $\DCone$, which can be proved by a simple induction on $\varphi$.

\begin{remark}\label{remark:substitutionsLow}
  For any p.s.p.~formula $\varphi$ we have $\varphi\pfsT \in \DCone$. 
\end{remark}

Note that $\DCone \subseteq \Sigma_2$ over $\HA$ by Lemma~\ref{theorem:SigmaTwoHaClosedUnderDisjunctions}. As a direct consequence of the previous observations we obtain the following result.

\begin{corollary}\label{cor:notAHAeqnotAPA}
  Let $\cdot\pfs$ be a realization obtained via Solovay's embedding of a finite transitive Noetherian tree-like model. Then for any p.s.p.~formula $\varphi$:
\begin{equation*}
  \HA \vdash \neg \varphi\pfsHA \iff \PA \vdash \neg \varphi\pfsPA
  .
\end{equation*}
Moreover, this is formalizable in $\HA$.%
\end{corollary}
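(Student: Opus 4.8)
The plan is to read off the statement from the facts already established, the key being that a Solovay realization $\cdot\pfs$ never leaves the class $\DCone$ (Remark~\ref{remark:substitutionsLow}), and that $\DCone$ is contained in $\Sigma_2$ over $\HA$ (Lemma~\ref{theorem:SigmaTwoHaClosedUnderDisjunctions}, together with closure of $\Sigma_2$ under conjunction). Fix a p.s.p.~formula $\varphi$. Then $\varphi\pfsHA$ and $\varphi\pfsPA$ are sentences in $\DCone$, so there is an honest $\Sigma_2$ sentence $\chi$ with $\HA \vdash \varphi\pfsPA \leftrightarrow \chi$; and by Lemma~\ref{theorem:HARealisationEquivalentToPA} (which, although stated for $\Sigma_2$-valued realizations, goes through here since our realization takes values in $\DCone\subseteq\Sigma_2$ over $\HA$) we also have $\HA \vdash \varphi\pfsHA \leftrightarrow \chi$.

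For the unformalized equivalence I would chain: $\HA \vdash \neg\varphi\pfsHA$ iff $\HA \vdash \neg\chi$ (by the $\HA$-provable equivalence just obtained), iff $\PA \vdash \neg\chi$ (the plain content of Lemma~\ref{theorem:DiamondSigmaTwoConservativity} instantiated at the $\Sigma_2$ sentence $\chi$ --- read off from its formalized statement using that $\HA$ is sound, so that $\Box_\HA\neg\chi \leftrightarrow \Box_\PA\neg\chi$ holds in $\N$), iff $\PA \vdash \neg\varphi\pfsPA$ (since $\HA \subseteq \PA$ proves $\varphi\pfsPA \leftrightarrow \chi$).

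For the ``moreover'' clause I would internalize each biconditional. The equivalences $\HA \vdash \varphi\pfsHA \leftrightarrow \chi$ and $\HA \vdash \varphi\pfsPA \leftrightarrow \chi$ are true $\Sigma_1$ statements about proofs, so by provable $\Sigma_1$-completeness $\HA \vdash \Box_\HA(\varphi\pfsHA \leftrightarrow \chi)$ and $\HA \vdash \Box_\HA(\varphi\pfsPA \leftrightarrow \chi)$; combining the second with \eqref{equation:HAincludedInPAFormalized} also gives $\HA \vdash \Box_\PA(\varphi\pfsPA \leftrightarrow \chi)$. By provable normality of $\Box_\HA$ and $\Box_\PA$ this yields, provably in $\HA$, that $\Box_\HA\neg\varphi\pfsHA \leftrightarrow \Box_\HA\neg\chi$ and $\Box_\PA\neg\varphi\pfsPA \leftrightarrow \Box_\PA\neg\chi$. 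Finally, Lemma~\ref{theorem:DiamondSigmaTwoConservativity} applied to $\chi$ gives $\HA \vdash \Box_\HA\neg\chi \leftrightarrow \Box_\PA\neg\chi$; chaining the three $\HA$-provable biconditionals yields $\HA \vdash \Box_\HA\neg\varphi\pfsHA \leftrightarrow \Box_\PA\neg\varphi\pfsPA$, which is the formalization, and the unformalized statement is recovered from it by soundness of $\HA$.

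I do not expect a genuine obstacle here: the corollary is bookkeeping on top of Lemmas~\ref{theorem:HARealisationEquivalentToPA} and~\ref{theorem:DiamondSigmaTwoConservativity} and Remark~\ref{remark:substitutionsLow}. The one point needing care is the internalization step --- recalling that a true statement ``$\HA\vdash\theta$'' is itself provable in $\HA$, and that the provability predicates are provably normal, so that the $\HA$-provable equivalences survive being boxed and negated.
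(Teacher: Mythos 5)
Your proposal is correct and follows exactly the route the paper intends: the paper's proof is the one-line citation ``By Remark~\ref{remark:substitutionsLow}, Lemma~\ref{theorem:HARealisationEquivalentToPA}, and Lemma~\ref{theorem:DiamondSigmaTwoConservativity}'', and your argument is a faithful (and more detailed) unpacking of precisely those three ingredients, including the correct observation that $\DCone \subseteq \Sigma_2$ over $\HA$ is what licenses applying the two lemmas to the Solovay substitution.
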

\begin{proof}
By Remark~\ref{remark:substitutionsLow}, Lemma~\ref{theorem:HARealisationEquivalentToPA}, and Lemma~\ref{theorem:DiamondSigmaTwoConservativity}.
\end{proof}

Note that we cannot prove a version of Corollary~\ref{cor:notAHAeqnotAPA} without the negation: if $\PA \vdash p\pfsPA$ implied $\HA \vdash p\pfsHA$, then by the disjunction property we would have $\HA \vdash \lambda_i$ for some $i$, which would contradict the independence of $\lambda_i$ from $\PA$.

\section{The strictly positive fragment of \texorpdfstring{$\PL(\HA)$}{PL(HA)}}
\label{section:PLHA+}

In this section we show that the strictly positive provability logic of $\HA$ and the reflection logic of $\HA$ both coincide with $\RC_1$.

\begin{theorem}\label{theorem:PLpOfHAisRC}
  Let $\varphi$ and $\psi$ be p.s.p.~formulas. We have $\varphi \vdash_{\RC_1} \psi$ if and only if for all realizations $\cdot\pfs$ we have $\HA \vdash \varphi\pfsHA \to \psi\pfsHA$.
\end{theorem}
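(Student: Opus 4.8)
The plan is to prove both directions separately. The soundness direction ($\Rightarrow$) should follow directly from the fact that $\RC_1$ is the strictly positive fragment of any logic between $\Kf$ and $\GL$ (cited after the definition of $\RC_\Lambda$), combined with $\iGL \subseteq \PL(\HA)$: if $\varphi \vdash_{\RC_1} \psi$, then $\GL \vdash \varphi \to \psi$, hence the (classical) modal proof can be mirrored using $\iGL$ — one needs only to check that for strictly positive $\varphi, \psi$ the implication $\varphi \to \psi$ is already derivable intuitionistically in $\iGL$, which is standard since $\RC_1$-derivations translate into $\iGL$-derivations step by step. Then for every realization $\cdot\pfs$ we get $\HA \vdash \varphi\pfsHA \to \psi\pfsHA$ because $\iGL$ is arithmetically sound for $\HA$.

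For the completeness direction ($\Leftarrow$), I would argue contrapositively: suppose $\varphi \nvdash_{\RC_1} \psi$. By Theorem~\ref{theorem:modalCompletenessGLandRC} there is a finite transitive Noetherian tree-like model $\M$ with root $r$ such that $\M, r \Vdash \varphi$ and $\M, r \nVdash \psi$. Apply Solovay's embedding (as recalled in Section~\ref{section:Solovay}) to this model to obtain the sentences $\lambda_i$ and the realization $p\pfs := \bigvee_{i \Vdash p} \lambda_i$. The standard Solovay argument gives $\PA \vdash \varphi\pfsPA$ and $\PA \nvdash \psi\pfsPA$; in fact, since $\M, r \nVdash \psi$ and the root is the added world $0$ (or a world below all others), one gets $\PA + \lambda_0 \nvdash \psi\pfsPA$, and since $\N \vDash \lambda_0$ by Lemma~\ref{embedding:Lambda0istrue}, in particular $\psi\pfsPA$ is not true in $\N$, so certainly $\PA \nvdash \psi\pfsPA$. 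Now I invoke Corollary~\ref{cor:notAHAeqnotAPA}: since $\psi\pfsPA$ is refutable (indeed $\neg \psi\pfsPA$ is true but that is not quite $\PA \vdash \neg\psi\pfsPA$) — here I need to be slightly more careful, see below. For the positive side, $\PA \vdash \varphi\pfsPA$ together with Lemma~\ref{theorem:HARealisationEquivalentToPA} would not immediately give $\HA \vdash \varphi\pfsHA$, so the right move is rather: run the \emph{constructive} Solovay argument directly, observing via Remark~\ref{remark:substitutionsLow} that all the $\lambda_i$ and the realized formulas lie in $\DCone$, hence the conservativity \eqref{equation:provableInHAiffPA} and Lemma~\ref{theorem:DiamondSigmaTwoConservativity} let one transfer each step of the embedding (Lemma~\ref{lem:embedding}) and the induction on $\varphi$ from $\PA$ to $\HA$. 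This yields $\HA \vdash \varphi\pfsHA$ while $\HA \nvdash \psi\pfsHA$ (the latter because $\psi\pfsHA$ fails in $\N$, using that Solovay realizations have true $\lambda_0$-behaviour and $\HA \subseteq$ true arithmetic). Hence the realization $\cdot\pfs$ witnesses the failure of $\HA \vdash \varphi\pfsHA \to \psi\pfsHA$.

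The main obstacle is the careful bookkeeping in the completeness direction: one must verify that Solovay's construction, originally carried out over $\PA$, goes through over $\HA$ once one restricts to the low-complexity realizations of Remark~\ref{remark:substitutionsLow}. Concretely, the embedding Lemma~\ref{lem:embedding} must be re-read with $\Box_\HA, \Diamond_\HA$ in place of $\Box_\PA, \Diamond_\PA$; clauses \ref{embedding:SomeOfAll}, \ref{embedding:OneOfAll} are $\Delta_0$-style facts and transfer trivially, clause \ref{embedding:Box} is about provability of a $\DCone$ disjunction and transfers by \eqref{equation:provableInHAiffPA} (the relevant sentence being $\Pi_2$, or reducible to that case), and clause \ref{embedding:Diamond} transfers by Lemma~\ref{theorem:DiamondSigmaTwoConservativity} since $\lambda_j \in \DCone \subseteq \Sigma_2$ over $\HA$. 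The induction showing $\M, r \Vdash \varphi \Rightarrow \HA \vdash \varphi\pfsHA$ then uses exactly these facts at the modal step, and is otherwise identical to the classical one; the $\nVdash \psi$ side needs only that $\HA$ is sound for $\N$ and that Solovay realizations evaluate correctly in $\N$ at the root. I expect no genuinely new difficulty beyond this transfer, which is precisely what Section~\ref{section:Solovay} was set up to provide.
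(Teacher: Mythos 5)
There is a genuine gap in your completeness direction. Your plan --- establish $\HA \vdash \varphi\pfsHA$ and $\HA \nvdash \psi\pfsHA$ separately by re-running Solovay's construction over $\HA$ --- pushes in exactly the direction that the conservativity machinery of Sections~\ref{section:hierarchyInHA} and~\ref{section:Solovay} cannot support. First, Solovay's argument does not give $\PA \vdash \varphi\pfsPA$: it gives $\PA \vdash \lambda_r \to \varphi\pfsPA$ together with the consistency of $\PA + \lambda_r$, hence only $\PA \nvdash \varphi\pfsPA \to \psi\pfsPA$ (already $\PA \nvdash p\pfsPA$ when $p\pfs = \lambda_1$, since $\lambda_1$ is independent). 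Second, and fatally, provability of the realized formulas cannot be transferred from $\PA$ to $\HA$: item~\ref{embedding:SomeOfAll} of Lemma~\ref{lem:embedding}, $\PA \vdash \bigvee_{i \leq N} \lambda_i$, is not a ``$\Delta_0$-style fact'' and \emph{fails} over $\HA$, because by the disjunction property $\HA \vdash \bigvee_i \lambda_i$ would force $\HA \vdash \lambda_i$ for some $i$, contradicting the independence of the $\lambda_i$; this is precisely the warning in the remark following Corollary~\ref{cor:notAHAeqnotAPA}. The realized formulas live in $\DCone \subseteq \Sigma_2$, not in $\Pi_2$, so \eqref{equation:provableInHAiffPA} does not apply to them; only statements of the form $\Box\neg(\cdot)$ and $\Diamond(\cdot)$ over $\Sigma_2$ transfer, via Lemma~\ref{theorem:DiamondSigmaTwoConservativity}. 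Finally, your fallback ``$\psi\pfsHA$ fails in $\N$'' is also false in general: take $\varphi = p$ and $\psi = \Diamond\top$ with a one-world countermodel; then $\psi\pfsPA$ is essentially $\Con(\PA)$, which is true in $\N$ even though the root refutes $\Diamond\top$. (The truth lemma at the added world $0$ is exactly where $\GL$ gives way to $\GLS$.)

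The paper's proof sidesteps all of this by transferring \emph{non}-provability downward instead of provability sideways: classical Solovay gives a realization with $\PA \nvdash \varphi\pfsPA \to \psi\pfsPA$; since the Solovay realization lands in $\DCone \subseteq \Sigma_2$, Lemma~\ref{theorem:HARealisationEquivalentToPA} makes $\varphi\pfsHA$ and $\varphi\pfsPA$ provably equivalent, so also $\PA \nvdash \varphi\pfsHA \to \psi\pfsHA$; and since $\HA \subseteq \PA$ by \eqref{equation:HAincludedInPAFormalized}, a fortiori $\HA \nvdash \varphi\pfsHA \to \psi\pfsHA$. No constructive version of the embedding lemma is needed. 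Your soundness direction is essentially fine, if more roundabout than the paper's direct appeal to the derivability conditions for $\Box_\HA$.
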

\begin{proof}
  Soundness is easy given the derivability conditions of $\Box_\HA$ (see \cite{VisserZoethout2019}) and it was already incidentally observed in \cite[Theorem~5.4.3]{BorgesPhD}.
  For completeness, reason as follows:
  \begin{align*}
    \varphi \nvdash_{\RC_1} \psi
      &\implies \text{ there is } \cdot\pfs \text{ s.t. }
        \PA \nvdash \varphi\pfsPA \to \psi\pfsPA
        &&\text{by Theorem~\ref{thm:Solovay}}
        \\
      &\implies \text{ there is } \cdot\pfs \text{ s.t. }
        \PA \nvdash \varphi\pfsHA \to \psi\pfsHA
        &&\text{by Lemma~\ref{theorem:HARealisationEquivalentToPA}}
        \\
      &\implies \text{ there is } \cdot\pfs \text{ s.t. }
        \HA \nvdash \varphi\pfsHA \to \psi\pfsHA
        &&\text{by \eqref{equation:HAincludedInPAFormalized}.}
  \end{align*}
\end{proof}

We note that there is some room for improvement and generalization, arithmetically speaking. Can this be exploited to get a slightly larger fragment (disjunctions seem to come for free, and boxes with a bit of extra work)?
 
Recall that infinitary realizations extend finitary ones in the following sense. Given a finitary realization $\cdot\pfs$, we can define $p\is := (u = \gnum{p\pfs})$. Then $p\isT$ axiomatizes the base theory extended by the single axiom $p\pfs$. This infinitary realization behaves exactly like its finitary counterpart, as the following lemma illustrates.

\begin{lemma}\label{lemma:WDTST} 
  Let $\varphi$ be a p.s.p.~formula, $\cdot\pfs$ be a finitary realization and $\cdot\is$ be its finitary counterpart as described above. Then:
  \begin{equation*}
    \HA \vdash \Forall{\theta} (\Box_{\varphi\isHA} \theta \leftrightarrow \Box_\HA (\varphi\pfsHA \to \theta))
    .
  \end{equation*}
\end{lemma}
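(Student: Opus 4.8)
The claim is that, for the specific finitary-to-infinitary conversion $p\is := (u = \gnum{p\pfs})$, the infinitary realization $\varphi\isHA$ axiomatizes exactly the theory $\HA + \varphi\pfsHA$, provably in $\HA$. So the plan is to prove by induction on the p.s.p.~formula $\varphi$ that
\begin{equation*}
  \HA \vdash \Forall{\theta} (\Box_{\varphi\isHA} \theta \leftrightarrow \Box_\HA (\varphi\pfsHA \to \theta)).
\end{equation*}
Throughout, I will reason inside $\HA$, using the standard provability derivability conditions for $\Box_\HA$ and the fact (from Definition~\ref{def:propositional:sp:realization}) that $\varphi\isHA$ is a $\Sigma_1$ formula $\chi(u)$ defining a c.e.~set of (codes of) axioms, so that $\Box_{\varphi\isHA}\theta$ means ``$\theta$ is derivable from $\HA$ together with the axioms enumerated by $\chi$''.

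\textbf{Base cases.} For $\varphi = \top$: here $\top\isHA = \tau(u)$ axiomatizes $\HA$ itself, so $\Box_{\top\isHA}\theta \leftrightarrow \Box_\HA\theta \leftrightarrow \Box_\HA(\top\pfsHA \to \theta)$ since $\top\pfsHA$ is $(0=0)$ or equivalently is $\HA$-provable. For $\varphi = p$: then $p\isHA = p\is \lor \tau(u) = (u = \gnum{p\pfs}) \lor \tau(u)$, which axiomatizes $\HA$ plus the single extra axiom $p\pfs$; the deduction theorem for $\HA$ (formalized) gives $\Box_{p\isHA}\theta \leftrightarrow \Box_\HA(p\pfs \to \theta)$, and $p\pfs = p\pfsHA$ by definition. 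The only subtlety is to make sure the formalized deduction theorem is available in $\HA$, which is a routine fact about c.e.~axiomatizations.

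\textbf{Inductive steps.} For $\varphi = \psi \land \delta$: we have $(\psi\land\delta)\isHA = \psi\isHA \lor \delta\isHA$, so the axiom set is the union of the two axiom sets, whence $\Box_{(\psi\land\delta)\isHA}\theta$ is equivalent to provability of $\theta$ from $\HA$ together with both axiom sets. Using the induction hypothesis twice plus the formalized deduction theorem, this unwinds to $\Box_\HA(\psi\pfsHA \to (\delta\pfsHA \to \theta))$, which is $\Box_\HA((\psi\pfsHA \land \delta\pfsHA) \to \theta)$, i.e.\ $\Box_\HA((\psi\land\delta)\pfsHA \to \theta)$. For $\varphi = \Diamond\psi$: here $(\Diamond\psi)\isHA = \tau(u) \lor (u = \gnum{\Diamond_{\psi\isHA}\top})$, which axiomatizes $\HA$ plus the single extra axiom $\Diamond_{\psi\isHA}\top$ (i.e.\ the consistency statement of the theory axiomatized by $\psi\isHA$). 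By the formalized deduction theorem, $\Box_{(\Diamond\psi)\isHA}\theta \leftrightarrow \Box_\HA(\Diamond_{\psi\isHA}\top \to \theta)$. It therefore suffices to show $\HA \vdash \Box_\HA(\Diamond_{\psi\isHA}\top \leftrightarrow (\Diamond\psi)\pfsHA)$, where $(\Diamond\psi)\pfsHA = \Diamond_\HA(\psi\pfsHA)$. This is where the induction hypothesis does the real work: instantiating it at $\theta := \bot$ gives $\HA \vdash \Box_{\psi\isHA}\bot \leftrightarrow \Box_\HA(\psi\pfsHA \to \bot) = \Box_\HA \neg\psi\pfsHA$, and hence, under a $\Box_\HA$, $\Diamond_{\psi\isHA}\top = \neg\Box_{\psi\isHA}\bot \leftrightarrow \neg\Box_\HA\neg\psi\pfsHA = \Diamond_\HA\psi\pfsHA$; this last chain needs the equivalence to hold provably in $\HA$, which follows by applying $\Box_\HA$-necessitation and distributivity to the induction hypothesis.

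\textbf{Main obstacle.} The genuinely delicate point is the $\Diamond$ case: one must be careful that the induction hypothesis — an equivalence of the form $\Box_{\psi\isHA}\theta \leftrightarrow \Box_\HA(\psi\pfsHA\to\theta)$ provable in $\HA$ — can be pushed inside another box to conclude $\Box_\HA(\Box_{\psi\isHA}\bot \leftrightarrow \Box_\HA\neg\psi\pfsHA)$, and thence the desired equivalence of the consistency statements under $\Box_\HA$. This uses only necessitation and the distribution axiom for $\Box_\HA$, so it goes through, but it is the step where the formalization (as opposed to the external truth) of each equivalence is essential. The repeated appeal to a formalized deduction theorem for c.e.-axiomatized theories — adding finitely many axioms to a base theory and commuting this with provability inside $\HA$ — is the other ingredient that must be invoked carefully but is entirely standard.
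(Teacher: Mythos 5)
Your proposal is correct and matches the paper's proof, which is simply stated as ``an easy external induction on $\varphi$''; your base cases, the conjunction case via the formalized deduction theorem, and the $\Diamond$ case via instantiating the induction hypothesis at $\theta := \bot$ and pushing it under a box with necessitation and distribution are exactly the intended details.
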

\begin{proof}
  By an easy external induction on $\varphi$.
\end{proof}

Using this the reflection logic of $\HA$ can be obtained as follows (recall \eqref{equation:infinitaryRC}).

\begin{theorem}\label{theorem:RefLogicHAisRCOne}
$\IPL(\HA) = \RC_1$.
\end{theorem}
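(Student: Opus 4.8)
\textbf{Proof plan for Theorem~\ref{theorem:RefLogicHAisRCOne}.}
The strategy is to reduce the statement $\IPL(\HA) = \RC_1$ to the already-established identity $\PLsp(\HA) = \RC_1$ (Theorem~\ref{theorem:PLpOfHAisRC}), using Lemma~\ref{lemma:WDTST} as the bridge between the infinitary and finitary realizations. I would first dispatch the inclusion $\RC_1 \subseteq \IPL(\HA)$ (soundness). Assume $\varphi \vdash_{\RC_1} \psi$; I would argue directly by induction on the $\RC_1$-derivation that $\HA \vdash \Forall{\theta}(\Box_{\psi\isHA}\theta \to \Box_{\varphi\isHA}\theta)$ for every infinitary realization $\cdot\is$, checking each axiom and rule of $\RC_1$ against the clauses of Definition~\ref{def:propositional:sp:realization}. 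For instance, the conjunction-elimination axioms follow from the clause $(\varphi\land\psi)\isT := \varphi\isT \lor \psi\isT$ together with the fact that provability from a disjunctive axiom set implies provability from either disjunct; transitivity and necessitation follow from the standard derivability conditions of $\Box_\HA$; and the diamond axiom is handled via the clause $(\lozenge\psi)\isT := \tau(u)\lor(u = \gnum{\Diamond_{\psi\isT}\top})$. This soundness direction is essentially bookkeeping and was already noted in analogous form for $\PA$.

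For the converse inclusion $\IPL(\HA)\subseteq\RC_1$, I would argue contrapositively: suppose $\varphi \nvdash_{\RC_1}\psi$. By Theorem~\ref{theorem:PLpOfHAisRC}, there is a finitary realization $\cdot\pfs$ with $\HA \nvdash \varphi\pfsHA \to \psi\pfsHA$. Let $\cdot\is$ be the finitary counterpart of $\cdot\pfs$ described just before Lemma~\ref{lemma:WDTST}, namely $p\is := (u = \gnum{p\pfs})$. Applying Lemma~\ref{lemma:WDTST} twice (once with $\theta := \psi\pfsHA$, to the realization $\varphi\isHA$), we get
\begin{equation*}
  \HA \vdash \Box_{\varphi\isHA}\psi\pfsHA \leftrightarrow \Box_\HA(\varphi\pfsHA \to \psi\pfsHA),
\end{equation*}
so from $\HA \nvdash \varphi\pfsHA \to \psi\pfsHA$ we cannot conclude $\Box$-non-provability directly; instead I would observe that if $\HA \vdash \Forall{\theta}(\Box_{\psi\isHA}\theta \to \Box_{\varphi\isHA}\theta)$ held, then instantiating $\theta := \psi\pfsHA$ and using Lemma~\ref{lemma:WDTST} on both sides gives $\HA \vdash \Box_\HA\psi\pfsHA \to \Box_\HA(\varphi\pfsHA\to\psi\pfsHA)$, which by necessitation (since $\HA \vdash \psi\pfsHA \to \psi\pfsHA$, hence $\HA \vdash \Box_\HA(\psi\pfsHA\to\psi\pfsHA)$... ) — more carefully, one should pick $\theta$ so that $\Box_{\psi\isHA}\theta$ is provable while $\Box_{\varphi\isHA}\theta$ is not; by Lemma~\ref{lemma:WDTST} the natural choice $\theta := \psi\pfsHA$ makes $\Box_{\psi\isHA}\psi\pfsHA$ equivalent to $\Box_\HA(\psi\pfsHA\to\psi\pfsHA)$, which is an $\HA$-theorem, while $\Box_{\varphi\isHA}\psi\pfsHA$ is equivalent to $\Box_\HA(\varphi\pfsHA\to\psi\pfsHA)$, which is \emph{not} an $\HA$-theorem by assumption, and hence $\Forall\theta(\Box_{\psi\isHA}\theta\to\Box_{\varphi\isHA}\theta)$ fails in $\HA$ (indeed fails externally, which suffices). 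Therefore $\la\varphi,\psi\ra \notin \IPL(\HA)$.

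\textbf{Main obstacle.} The delicate point is the direction of the implication inside the definition of $\IPL(\HA)$: a smaller axiom set proves \emph{more}, so $\psi\isT$ axiomatizing a weaker theory than $\varphi\isT$ is what makes $\Box_{\psi\isT}\theta \to \Box_{\varphi\isT}\theta$ the correct shape, and one must track this reversal consistently through Lemma~\ref{lemma:WDTST}. I expect the completeness direction to require only a careful choice of the witnessing sentence $\theta$ (namely $\psi\pfsHA$, or a closely related formula) and a clean appeal to Lemma~\ref{lemma:WDTST}; the soundness direction, while longer, is routine given the derivability conditions for $\Box_\HA$ and the explicit clauses of Definition~\ref{def:propositional:sp:realization}. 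No new arithmetical input beyond Theorem~\ref{theorem:PLpOfHAisRC} and Lemma~\ref{lemma:WDTST} should be needed, since the conservativity work has already been absorbed into those results.
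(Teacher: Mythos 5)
Your plan follows the paper's proof essentially step for step: soundness by induction on the $\RC_1$-derivation, and for completeness a reduction to Theorem~\ref{theorem:PLpOfHAisRC} via the finitary-to-infinitary bridge $p\is := (u = \gnum{p\pfs})$ and Lemma~\ref{lemma:WDTST}, with the witnessing sentence $\theta := \psi\pfsHA$. The choice of witness and the use of Lemma~\ref{lemma:WDTST} on both sides are exactly right.

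There is one step where your justification is not correct as stated. You claim that $\Box_{\varphi\isHA}\psi\pfsHA$, which by Lemma~\ref{lemma:WDTST} is $\HA$-equivalent to $\Box_\HA(\varphi\pfsHA \to \psi\pfsHA)$, is ``not an $\HA$-theorem by assumption.'' But the assumption supplied by Theorem~\ref{theorem:PLpOfHAisRC} is only that $\HA \nvdash \varphi\pfsHA \to \psi\pfsHA$; it says nothing directly about the provability of the \emph{boxed} sentence. (You in fact notice this yourself mid-argument --- ``we cannot conclude $\Box$-non-provability directly'' --- but then the final formulation reasserts it without justification.) The missing ingredient is the $\Sigma_1$-soundness of $\HA$: if $\HA \vdash \Box_\HA(\varphi\pfsHA \to \psi\pfsHA)$, then since this sentence is $\Sigma_1$ and $\HA$ is $\Sigma_1$-sound, $\N \vDash \Box_\HA(\varphi\pfsHA \to \psi\pfsHA)$, i.e.\ $\HA \vdash \varphi\pfsHA \to \psi\pfsHA$, contradicting the choice of $\cdot\pfs$. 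This is precisely the step the paper makes explicit, and without it the argument does not close; with it, your proof is complete.
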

\begin{proof}
  The soundness, that $\RC_1 \subseteq \IPL(\HA)$, follows from a straightforward induction and was already incidentally observed in \cite[Theorem~10.3]{Escape}.
  For the other direction, assume that $\varphi \nvdash_{\RC_1} \psi$. With Theorem~\ref{theorem:PLpOfHAisRC} we find $\cdot\pfs$ such that $\HA \nvdash \varphi\pfsHA \to \psi\pfsHA$. We now set $p\is := (u = \gnum{p\pfs})$. We have $\HA \vdash \Box_{\psi\isHA} \psi\pfsHA$ by Lemma~\ref{lemma:WDTST}. We now check that $\HA \nvdash \Box_{\varphi\isHA} \psi\pfsHA$ with the following reasoning:
  \begin{align*}
    \HA \vdash \Box_{\varphi\isHA} \psi\pfsHA
      &\implies \HA \vdash \Box_{\HA} (\varphi\pfsHA \to \psi\pfsHA)
        &&\text{by Lemma~\ref{lemma:WDTST}}
        \\
      &\implies \N \vDash \Box_{\HA} (\varphi\pfsHA \to \psi\pfsHA)
        &&\text{by $\Sigma_1$-soundness}
        \\
      &\implies \HA \vdash \varphi\pfsHA \to \psi\pfsHA.
        &&
  \end{align*}
  Since $\HA \nvdash \varphi\pfsHA \to \psi\pfsHA$ by the choice of $\cdot\pfs$, we conclude that $\HA \nvdash \Box_{\varphi\isHA} \psi\pfsHA$.
  Consequently, $\HA \nvdash \Forall{\theta} (\Box_{\psi\isHA} \theta \to \Box_{\varphi\isHA} \theta)$, as was to be shown.
\end{proof}

\section{The strictly positive fragment of \texorpdfstring{$\QPL(\HA)$}{QPL(HA)}}
\label{section:QPLplusHA}

In this section we prove that $\QPLsp(\HA) = \IQPL(\HA) = \QRC$. The proofs are essentially the same as the ones in the propositional case as stated in Theorems~\ref{theorem:PLpOfHAisRC} and \ref{theorem:RefLogicHAisRCOne}, because we can reduce the quantified case to the propositional case. In \cite{Escape} this reduction was done in $\PA$ but it can actually be obtained in $\HA$.

The reduction boils down to the finite model property of $\QRC$ (Theorem~\ref{theorem:QRCcomplete}), which implies that a seemingly unrestricted universal quantification is equivalent to a finite conjunction. This is observed in Lemma~\ref{theorem:quantificationIsConjunction} below. As such, the complexity of the arithmetical interpretations of our quantified modal formulas can be restricted, as observed in Lemma~\ref{theorem:substitutionsForLpLow}.

To obtain these results, we first recall the arithmetical interpretations used in the arithmetical completeness proof presented in \cite{Escape}, where finite first-order Kripke models are embedded in arithmetic.

For the remainder of this section, we fix a certain finite constant domain model $\M$.
Let the domain of every world of $\M$ be denoted by $M$. Let $m$ be the size of $M$, and $\bij{\cdot}$ be a bijection between $M$ and the set of numerals $\{\overline{0}, \hdots, \overline{m - 1}\}$.

For a given $n$-ary predicate symbol $S$ and terms $\vec{t} := t_0, \hdots, t_{n-1}$, the $\cdot\ofs$ interpretation of $S$ is defined as follows:
  \begin{gather*}
    S(\vec{t})\ofs :=
    \bigvee_{i \leq N} \left(\lambda_i \land \Phi^{S(\vec{t})}_i \right) \text{, where:}\\
    \Phi_i^{S(\vec{t})} :=
          \bigvee_{\la a_0, \hdots, a_{n-1} \ra \in S^{J_i}}
          \bigwedge_{l < n} \left(
            \bij{a_l} =
              \begin{cases}
                y_k \umod \overline m &\text{if } t_l = x_k\\
                z_k \umod \overline m &\text{if } t_l = c_k
              \end{cases}
          \right)
      .
  \end{gather*}
Here $x_k$ is any $\QRC$ variable, $y_k$ is the corresponding arithmetical variable, $c_k$ is any $\QRC$ constant, $z_k$ is the corresponding arithmetical variable, and $\cdot^{J_i}$ is the denotation of a predicate symbol at world $i$.
It is clear that $\cdot\ofs$ depends on the particular choice of $\M$, but this dependency is omitted in the notation for simplicity. We keep $\M$ fixed throughout this section, so this should not lead to confusion.

\begin{remark}\label{remark:baseCaseSimple}
  For any given finite and constant domain model $\M$, we have that $S(\vec{t})\ofs \in \DCone$.
\end{remark}

The $\cdot\ofs$ interpretation is extended to all strictly positive formulas as described in Definition~\ref{def:QML:realization}. The interpretation $S(\vec{t})\ofs$ depends only on the values of its free variables modulo $m$. This modular dependence propagates over all formulas, as shown by the following lemma.

\begin{lemma}\label{theorem:quantificationIsConjunction}
\label{lem:mod}
  For any c.e.~theory $T$, any $\QRC$ formula $\varphi$, and any arithmetical variable $u$:
  \begin{enumerate}
    \item
      $
        \HA \vdash
          \varphi\ofsT
          \leftrightarrow
          \varphi\ofsT\subst{u}{(u \umod \overline m)}
      $;
    \item
      $
        \HA \vdash
          \Forall{u} \varphi\ofsT
          \leftrightarrow
          \bigwedge_{u \in \{\overline 0, \ldots , \overline{m-1}\}} \varphi\ofsT
      $.
  \end{enumerate}
\end{lemma}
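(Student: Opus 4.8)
The plan is to prove both statements by a single external induction on the structure of the $\QRC$ formula $\varphi$, where the first statement does the real work and the second follows from it by a short argument. For statement~(1), the base cases and inductive steps are as follows. When $\varphi = \top$, then $\varphi\ofsT = \tau(u)$ and the claim is trivially about the single variable $u$; more care is needed because $\tau(u)$ is precisely the formula asserting ``$u$ is (the code of) an axiom'', so substituting $u \umod \overline m$ for $u$ does \emph{not} preserve truth in general. This means I should be careful: the modular dependence being claimed is on the \emph{other} free variables $y_k, z_k$, not on the auxiliary variable $u$ that ranges over the axiomatization index. Re-reading the statement of Lemma~\ref{theorem:quantificationIsConjunction}, the variable $u$ in part~(2) is being quantified as one of the $y_k$ or $z_k$ (i.e., a variable arising from a modal variable or constant), not the axiomatization parameter; so in part~(1), $u$ is such a ``genuine'' free variable and the claim is that the truth of $\varphi\ofsT$ depends on the value of $u$ only through $u \umod \overline m$.

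With that reading fixed, the induction on $\varphi$ proceeds cleanly. For the atomic case $\varphi = S(\vec t)$, we have $S(\vec t)\ofsT = S(\vec t)\ofs \lor \tau(w)$ where $w$ is the axiomatization parameter, and $S(\vec t)\ofs$ is the explicit disjunction $\bigvee_{i \leq N}(\lambda_i \land \Phi_i^{S(\vec t)})$; since the $\lambda_i$ are sentences and each conjunct of $\Phi_i^{S(\vec t)}$ compares $\bij{a_l}$ against $y_k \umod \overline m$ or $z_k \umod \overline m$, the formula manifestly depends on each $y_k, z_k$ only modulo $\overline m$, and in particular on $u$ only through $u \umod \overline m$. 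The propositional connective cases ($\land$, which via Definition~\ref{def:isT} becomes $\lor$, and likewise $\Diamond$) are immediate from the induction hypothesis, since $u \umod \overline m$ commutes through the Boolean structure and through the formation of $\gnum{\Diamond_{\psi\ofsT}\top}$ — here one notes that $\psi\ofsT$ with $u$ replaced by $u \umod \overline m$ is $\HA$-provably equivalent to $\psi\ofsT$ by the induction hypothesis, hence their provability predicates agree, hence the two coded sentences induce $\HA$-equivalent formulas. The quantifier case $\varphi = \Forall{x_i}\psi$, which becomes $\Exists{y_i}\psi\ofsT$, also goes through directly from the induction hypothesis applied to $\psi$, since $u$ is distinct from the bound variable $y_i$.

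Statement~(2) then follows from statement~(1): by part~(1), $\HA \vdash \varphi\ofsT \leftrightarrow \varphi\ofsT\subst{u}{(u \umod \overline m)}$, and since $\HA$ proves $\Forall u\, (u \umod \overline m \in \{\overline 0, \ldots, \overline{m-1}\})$ and proves that $u \umod \overline m$ takes each of those $m$ values for appropriate $u$, we get $\HA \vdash \Forall u\, \varphi\ofsT \leftrightarrow \Forall u\, \bigwedge_{v \in \{\overline 0, \ldots, \overline{m-1}\}}(u \umod \overline m = v \to \varphi\ofsT\subst{u}{v})$, and the right-hand side simplifies to $\bigwedge_{v \in \{\overline 0, \ldots, \overline{m-1}\}} \varphi\ofsT\subst{u}{v}$, which is exactly the displayed finite conjunction. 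All of this elementary modular arithmetic is available already in $\EA$ and hence in $\iEA \subseteq \HA$.

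The main obstacle I anticipate is the $\Diamond$ case in the induction for part~(1): one must argue that replacing $u$ by $u \umod \overline m$ inside $\psi\ofsT$ yields an $\HA$-provably equivalent formula, and then that the \emph{codes} $\gnum{\Diamond_{\psi\ofsT}\top}$ and $\gnum{\Diamond_{\psi\ofsT\subst{u}{(u\umod\overline m)}}\top}$ — which are literally different numerals — nonetheless give rise to $\HA$-equivalent consistency statements. This requires a small amount of formalized reasoning (provable equivalence of the realized formulas propagates to provable equivalence of the associated $\Diamond$-sentences via the derivability conditions), rather than a syntactic identity; it is the one place where ``$\leftrightarrow$ provably in $\HA$'' is genuinely weaker than ``identical'' and must be handled with care. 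Everything else is routine structural bookkeeping.
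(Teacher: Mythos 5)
Your overall strategy is exactly the paper's: part (1) by external induction on $\varphi$, with the atomic case resting on the $\HA$-provable idempotence $(u \umod \overline m) \umod \overline m = u \umod \overline m$, and part (2) obtained from part (1) by noting that $u \umod \overline m$ ranges exactly over $\{\overline 0, \ldots, \overline{m-1}\}$. Your eventual reading of $u$ as one of the genuine arithmetical variables $y_k, z_k$ (rather than the axiomatization parameter) is also the intended one. The concrete problem is that you run the induction over the clauses of the wrong realization. The lemma is about $\cdot\ofsT$, which is the \emph{finitary} extension of $\cdot\ofs$ per Definition~\ref{def:QML:realization}: $S(\vec t)\ofsT = S(\vec t)\ofs$ with no disjunct $\tau(w)$; $(\varphi \land \psi)\ofsT = \varphi\ofsT \land \psi\ofsT$ (conjunction does not turn into disjunction); $(\Forall{x_i}\psi)\ofsT = \Forall{y_i}\psi\ofsT$, not $\Exists{y_i}\psi\ofsT$; and $(\Diamond\psi)\ofsT = \Diamond_T\,\psi\ofsT$, not $\tau(u) \lor (u = \gnum{\Diamond_{\psi\ofsT}\top})$. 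The clauses you used belong to the infinitary realization of Definition~\ref{def:isT}, whose counterpart $\cdot\oisT$ only enters later, in Lemma~\ref{lem:same_thing}. The distinction is not cosmetic: the whole point of part (2), as used in Lemma~\ref{theorem:substitutionsForLpLow}, is to replace the genuine universal quantifier $\Forall{y_i}$ appearing in $(\Forall{x_i}\psi)\ofsT$ by a finite conjunction so that the interpretation stays in $\DCone$.

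The repair is straightforward and, if anything, simpler than what you wrote: the atomic case is as you describe (all occurrences of $y_k, z_k$ sit inside comparisons with $\cdot \umod \overline m$, so idempotence gives the equivalence); the $\land$ and $\Forall{y_i}$ cases are immediate since $y_i \neq u$; and the $\Diamond$ case is the one place needing the formalized reasoning you correctly anticipate, but phrased for $\Diamond_T$: one must check that the equivalence supplied by the induction hypothesis is provable not just in $\HA$ but in $T$, verifiably so in $\HA$, which holds because it reduces to $\Delta_0$ facts about $\umod$ available in $\iEA \subseteq T$ and because provable equivalence propagates under $\Box_T$ by the derivability conditions. Your derivation of (2) from (1) is fine as written. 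So the mathematical ideas are all present and match the paper's one-line proof, but as it stands your induction establishes a statement about the wrong object.
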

\begin{proof}
  The second item is a straightforward consequence of the first, which is proved by an external induction on the complexity of $\varphi$, noting that $\HA$ proves:
  \begin{equation*}
    (u \umod \overline m) \umod \overline m = u \umod \overline m
  \end{equation*}
for any $u$.
\end{proof}

Since quantification does not essentially increase complexity, we see that the arithmetical interpretations of $\Lp$ are all of low complexity.

\begin{lemma}\label{theorem:substitutionsForLpLow}
For any c.e.~theory $T$ and any s.p.~formula $\varphi \in \Lp$, we have that $\varphi^{\ofsT} \in \DCone$.
\end{lemma}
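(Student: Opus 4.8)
The plan is to prove Lemma~\ref{theorem:substitutionsForLpLow} by an external induction on the structure of the strictly positive formula $\varphi \in \Lp$, tracking at each step that the interpretation stays within the class $\DCone$ (finite conjunctions and disjunctions of $\Pi_1$ and $\Sigma_1$ sentences) over $\HA$. The base case is $\varphi = S(\vec t)$, which is handled directly by Remark~\ref{remark:baseCaseSimple}: $S(\vec t)\ofs$ is a finite disjunction over $i \leq N$ of $\lambda_i \land \Phi_i^{S(\vec t)}$, where each $\lambda_i$ is in $\DCone$ by \eqref{equation:SolovayFunctionLow} and each $\Phi_i^{S(\vec t)}$ is a finite Boolean combination of $\Delta_0$ formulas, hence certainly $\DCone$; the case $\varphi = \top$ is immediate since $\top\ofsT = \tau(u)$ is $\Sigma_1$.

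For the inductive step I would treat the three remaining clauses of Definition~\ref{def:QML:realization} (restricted to $\Lp$). The conjunction case $\varphi = \psi \land \delta$ gives $\varphi\ofsT = \psi\ofsT \land \delta\ofsT$, and $\DCone$ is closed under conjunction by definition, so the induction hypothesis applies directly. The diamond case $\varphi = \Diamond \psi$ gives $(\Diamond\psi)\ofsT := \Box_T\psi\ofsT$-flavoured interpretation — more precisely, $\tau(u) \lor (u = \gnum{\Diamond_{\psi\ofsT}\top})$ in the infinitary style, or in the finitary style $\Box_T \psi\ofsT$; in either case the resulting formula is $\Sigma_1$ (a provability statement is $\Sigma_1$, a disjunction of a $\Sigma_1$ axiomatization with an equation is $\Sigma_1$), hence in $\DCone$, independently of the complexity of $\psi\ofsT$. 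The key case is the universal quantifier $\varphi = \Forall{x_k}\psi$: here $(\Forall{x_k}\psi)\ofsT = \Forall{y_k}\psi\ofsT$, which a priori adds a genuine universal quantifier and would push us out of $\DCone$. This is where Lemma~\ref{theorem:quantificationIsConjunction}(2) does the work: over $\HA$, $\Forall{y_k}\psi\ofsT$ is equivalent to the finite conjunction $\bigwedge_{y_k \in \{\overline 0,\ldots,\overline{m-1}\}}\psi\ofsT$, and by the induction hypothesis each conjunct is $\DCone$ over $\HA$, so the conjunction is too.

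The main obstacle, and the only place any care is needed, is the universal-quantifier case, and specifically making sure that the ``$\DCone$ over $\HA$'' bookkeeping is consistent: the induction hypothesis must be stated as ``$\varphi\ofsT$ is $\HA$-provably equivalent to some formula in $\DCone$'', not literally syntactically in $\DCone$, so that substituting numerals for $y_k$ (which Lemma~\ref{lem:mod}(1) justifies) and then forming a finite conjunction preserves the property. One should also note that substituting a closed term (numeral) for a free variable in a $\DCone$ formula keeps it in $\DCone$, and that the equivalences supplied by Lemma~\ref{theorem:quantificationIsConjunction} are uniform in the remaining free variables, so the argument goes through with the remaining parameters carried along. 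Everything else is routine closure of $\DCone$ under finite conjunction and disjunction, together with the fact that provability predicates are $\Sigma_1$; I would therefore state the induction hypothesis carefully and then dispatch the four cases in the order base ($S(\vec t)$ and $\top$), conjunction, diamond, universal quantifier, citing Remark~\ref{remark:baseCaseSimple} and Lemma~\ref{theorem:quantificationIsConjunction} at the appropriate points.
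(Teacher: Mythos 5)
Your proposal is correct and matches the paper's (much terser) proof: an external induction using Remark~\ref{remark:baseCaseSimple} for the base case and Lemma~\ref{theorem:quantificationIsConjunction} for the universal-quantifier step, with the understanding that membership in $\DCone$ is up to $\HA$-provable equivalence. One small slip: under the finitary reading relevant here, $(\Diamond\psi)\ofsT$ is $\Diamond_T\psi\ofsT = \neg\Box_T\neg\psi\ofsT$, which is $\Pi_1$ rather than $\Sigma_1$ — but that is still in $\DCone$, so the argument is unaffected.
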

\begin{proof}
  By an easy induction using Remark~\ref{remark:baseCaseSimple} for the base case and Lemma~\ref{theorem:quantificationIsConjunction} for the only non-trivial inductive step.
\end{proof}

We can now obtain the analogous of Lemma~\ref{theorem:HARealisationEquivalentToPA} in the quantified case. Note however that this does not hold in general for all realizations; the claim here is only for the specific realization $\cdot\ofs$.

\begin{lemma}\label{theorem:HAQrealisationEquivalentToPA}
  For any formula $\varphi \in \Lp$, we have $\HA \vdash \varphi\ofsHA \leftrightarrow \varphi\ofsPA$.
\end{lemma}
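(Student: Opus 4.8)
The plan is to prove the equivalence by external induction on the structure of the strictly positive formula $\varphi \in \Lp$, following the same skeleton as the proof of Lemma~\ref{theorem:HARealisationEquivalentToPA} but with an extra clause for the universal quantifier. The base cases are immediate: $\top\ofsHA$ and $\top\ofsPA$ are literally the same formula, and by the atomic clause of Definition~\ref{def:QML:realization} we have $S(\vec t)\ofsHA = S(\vec t)\ofs = S(\vec t)\ofsPA$, since the formula $\bigvee_{i\le N}(\lambda_i \land \Phi_i^{S(\vec t)})$ does not refer to the theory at all. The conjunction step is trivial from the induction hypothesis, as $(\psi\land\delta)\ofsT = \psi\ofsT \land \delta\ofsT$, and the universal step is equally trivial: from $\HA \vdash \psi\ofsHA \leftrightarrow \psi\ofsPA$ we generalize on $y_k$ to get $\HA \vdash \Forall{y_k}\psi\ofsHA \leftrightarrow \Forall{y_k}\psi\ofsPA$, which is $\HA \vdash (\Forall{x_k}\psi)\ofsHA \leftrightarrow (\Forall{x_k}\psi)\ofsPA$. (Alternatively one may first collapse the quantifier to the finite conjunction $\bigwedge_{y_k \in \{\overline 0, \ldots, \overline{m-1}\}}\psi\ofsT$ via Lemma~\ref{theorem:quantificationIsConjunction} and reuse the conjunction case.)

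The only case with content is $\varphi = \Diamond\psi$, where $(\Diamond\psi)\ofsT = \neg\Box_T\neg\psi\ofsT$, so the goal is $\HA \vdash \Box_\HA\neg\psi\ofsHA \leftrightarrow \Box_\PA\neg\psi\ofsPA$. I would split this into two moves. First, the induction hypothesis gives $\HA \vdash \neg\psi\ofsHA \leftrightarrow \neg\psi\ofsPA$; this is a provable equivalence of formulas possibly carrying free arithmetical variables, and by the routine argument that provable equivalences may be moved under a provability box (necessitation, formalized $\forall$-elimination to handle the implicit numeral substitution, and formalized modus ponens, using $\Sigma_1$-completeness of $\HA$ on the $\Box_\PA$ side) it follows that $\HA \vdash \Box_\PA\neg\psi\ofsHA \leftrightarrow \Box_\PA\neg\psi\ofsPA$. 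It therefore remains to prove $\HA \vdash \Box_\HA\neg\psi\ofsHA \leftrightarrow \Box_\PA\neg\psi\ofsHA$. For the second move, Lemma~\ref{theorem:substitutionsForLpLow} tells us $\psi\ofsHA \in \DCone$, hence $\psi\ofsHA$ is $\Sigma_2$ over $\HA$ by Lemma~\ref{theorem:SigmaTwoHaClosedUnderDisjunctions}, so the first item of Lemma~\ref{theorem:DiamondSigmaTwoConservativity}, instantiated at $\psi\ofsHA$, delivers exactly the required equivalence. Chaining the two moves closes the case, and with it the induction.

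The step I expect to need the most care is this $\Diamond$ case — not because it is deep, but because of bookkeeping: $\psi\ofsHA$ generally has free variables, so the boxes carry implicit numeral substitutions, and to instantiate Lemma~\ref{theorem:DiamondSigmaTwoConservativity} (a statement quantifying over $\Sigma_2$ sentences formalized inside $\HA$) at the concrete formula $\psi\ofsHA$ one needs the $\Sigma_2$-over-$\HA$ bound of Lemma~\ref{theorem:substitutionsForLpLow} to hold uniformly in the parameters and to be verifiable in $\HA$. Granting that — which the proof of Lemma~\ref{theorem:substitutionsForLpLow} provides — the rest is a direct replay of the propositional Lemma~\ref{theorem:HARealisationEquivalentToPA}.
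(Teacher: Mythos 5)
Your proposal is correct and takes essentially the same route as the paper, which also argues by an easy external induction, using Lemma~\ref{theorem:DiamondSigmaTwoConservativity} for the diamond case (implicitly relying on the $\DCone$, hence $\Sigma_2$-over-$\HA$, bound of Lemma~\ref{theorem:substitutionsForLpLow}) and Lemma~\ref{theorem:quantificationIsConjunction} for the quantifier case. Your extra care about free variables and numeral substitutions in the $\Diamond$ case is exactly the bookkeeping the paper leaves implicit.
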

\begin{proof}
  By an easy external induction using Lemma~\ref{theorem:DiamondSigmaTwoConservativity} for the diamond case and Lemma~\ref{theorem:quantificationIsConjunction} for the quantifier case.
\end{proof}

Finally, we can be more specific in the statement of the completeness theorem $\QPLsp(\PA) \subseteq \QRC$.

\begin{theorem}[\protect{\cite[Theorem~8.7]{Escape}}]
\label{theorem:specific_finitary_completeness}

Let $\varphi$ and $\psi$ be $\QRC$ formulas and let $\vec y, \vec z$ represent all the free variables of $\varphi\ofsPA \land \psi\ofsPA$. If $\varphi \nvdash_{\QRC} \psi$, then there are standard numbers $\vec{y_0}, \vec{z_0}$ such that $\PA \nvdash (\varphi\ofsPA \to \psi\ofsPA)\subst{\vec y}{\vec{y_0}}\subst{\vec z}{\vec{z_0}}$.
\end{theorem}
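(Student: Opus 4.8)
The plan is to reduce this ``specific'' version of the completeness theorem $\QPLsp(\PA) \subseteq \QRC$ to the already-cited general version, Theorem~\ref{theorem:QPLspEqualsQRCOne}, by exploiting the modular periodicity of the realization $\cdot\ofs$ recorded in Lemma~\ref{theorem:quantificationIsConjunction}. First I would unwind what Theorem~\ref{theorem:QPLspEqualsQRCOne} gives us: since $\QPLsp(\PA) = \QRC$, from $\varphi \nvdash_{\QRC} \psi$ we know that $\la \varphi, \psi \ra \notin \QPLsp(\PA)$, so there is \emph{some} realization $\cdot\fs$ with $\PA \nvdash \Forall{\vec y, \vec z}(\varphi\fs{}^{\PA} \to \psi\fs{}^{\PA})$. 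However, to get the sharper statement I actually want to start from the arithmetical completeness proof of \cite{Escape} which produces a \emph{specific} realization — namely the embedding realization $\cdot\ofs$ attached to a finite constant-domain countermodel $\M$ for $\varphi \vdash_{\QRC} \psi$ supplied by Theorem~\ref{theorem:QRCcomplete}. That proof already shows $\PA \nvdash \Forall{\vec y, \vec z}(\varphi\ofsPA \to \psi\ofsPA)$; the only new content here is to extract concrete witnesses $\vec{y_0}, \vec{z_0}$.

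The key step is then purely a matter of quantifier manipulation. By Lemma~\ref{theorem:quantificationIsConjunction}(2), applied over $\PA$ (the lemma is stated for $\HA$, hence holds for $\PA$ as well, or one re-runs the trivial induction), each universal quantifier $\Forall{y_k}$ or $\Forall{z_k}$ in $\Forall{\vec y, \vec z}(\varphi\ofsPA \to \psi\ofsPA)$ can be replaced by a finite conjunction ranging over the numerals $\{\overline 0, \hdots, \overline{m-1}\}$, since the matrix depends on each free variable only modulo $\overline m$. Iterating this over all the variables $\vec y, \vec z$, we obtain
\begin{equation*}
  \PA \vdash
    \Forall{\vec y, \vec z}(\varphi\ofsPA \to \psi\ofsPA)
    \leftrightarrow
    \bigwedge_{\vec{y_0}, \vec{z_0} \in \{\overline 0, \hdots, \overline{m-1}\}}
      (\varphi\ofsPA \to \psi\ofsPA)\subst{\vec y}{\vec{y_0}}\subst{\vec z}{\vec{z_0}}
    .
\end{equation*}
Since $\PA$ does not prove the left-hand side, it does not prove the right-hand side, which is a finite conjunction; hence $\PA$ fails to prove at least one conjunct, i.e.\ there are standard numbers $\vec{y_0}, \vec{z_0}$ (indeed numerals below $\overline m$) with $\PA \nvdash (\varphi\ofsPA \to \psi\ofsPA)\subst{\vec y}{\vec{y_0}}\subst{\vec z}{\vec{z_0}}$, as required.

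There is no serious obstacle here; the statement is essentially a repackaging of \cite[Theorem~8.7]{Escape} together with the periodicity lemma, which is presumably why it is simply attributed to \cite{Escape} in the excerpt. The one point that requires a little care is making sure the ``finite conjunction'' reformulation is applied correctly when several free variables are present: one must check that the substitutions $\subst{\vec y}{\vec{y_0}}\subst{\vec z}{\vec{z_0}}$ commute appropriately and that no variable capture occurs, which is immediate because the $\vec{y_0}, \vec{z_0}$ are closed numerals. A secondary bookkeeping point is confirming that the free variables of $\varphi\ofsPA \land \psi\ofsPA$ are exactly among the $y_k, z_k$ corresponding to the $\QRC$ variables and constants occurring in $\varphi, \psi$ — this is visible directly from the definition of $S(\vec t)\ofs$. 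Once these are in place, the conclusion follows by the pigeonhole-style observation that a provable finite conjunction has all conjuncts provable.
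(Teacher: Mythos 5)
The paper offers no proof of this statement at all --- it is imported verbatim as \cite[Theorem~8.7]{Escape} --- so there is nothing internal to compare against; what can be assessed is whether your reconstruction is sound, and it is, modulo one presentational point. Your route is: take the specific embedding realization $\cdot\ofs$ built from the finite constant-domain countermodel of Theorem~\ref{theorem:QRCcomplete}, accept from the completeness proof in \cite{Escape} that $\PA \nvdash \Forall{\vec y, \vec z}(\varphi\ofsPA \to \psi\ofsPA)$, and then use the mod-$m$ periodicity of $\cdot\ofs$ to replace the universal closure by a finite conjunction of numeral instances, so that unprovability of the conjunction forces unprovability of some conjunct. This is a correct derivation of the pointwise statement from the universally quantified one, and it is genuinely different in direction from the argument in \cite{Escape}, where the witnesses $\vec{y_0}, \vec{z_0}$ are read off directly from the satisfying $w$-assignment $g$ of the countermodel (via the bijection $\bij{\cdot}$) and the Solovay-style truth lemma is proved at those specific values; the pointwise version there comes first and the quantified version is the corollary, whereas you invert this. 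Two small points of care: Lemma~\ref{theorem:quantificationIsConjunction} is stated only for formulas of the form $\chi\ofsT$ with $\chi$ strictly positive, so it does not literally apply to the implication $\varphi\ofsPA \to \psi\ofsPA$; you need to apply item~(1) to $\varphi$ and $\psi$ separately and observe that mod-$m$ dependence is preserved under Boolean combinations --- you gesture at this and it is immediate, but it should be said. Second, your step ``that proof already shows $\PA \nvdash \Forall{\vec y, \vec z}(\varphi\ofsPA \to \psi\ofsPA)$'' is importing nearly all of the content of the theorem from the cited source; since the paper itself does exactly this (cites rather than proves), that is acceptable here, but be aware that your argument does not reprove the arithmetical completeness core, only the passage from the quantified to the instantiated form of its conclusion.
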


With these preparations, we repeat our propositional completeness proof and obtain a quantified completeness proof.

\begin{theorem}\label{theorem:QPLpOfHAisQRC}
  Let $\varphi$ and $\psi$ be formulas in $\Lp$. We have $\varphi \vdash_{\QRC} \psi$ if and only if for all realizations $\cdot\fs$ we have
  $\HA \vdash \varphi\fsHA \to \psi\fsHA$.
\end{theorem}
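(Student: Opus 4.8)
The plan is to replay the proof of Theorem~\ref{theorem:PLpOfHAisRC} almost verbatim, substituting the quantified ingredients prepared above for their propositional counterparts. For soundness, that $\varphi \vdash_{\QRC} \psi$ implies $\HA \vdash \varphi\fsHA \to \psi\fsHA$ for every realization $\cdot\fs$, I would run a routine induction on a $\QRC$-derivation, using the Löb derivability conditions for $\Box_\HA$ together with the fact that $\HA$ proves the needed first-order laws (for $\fI$, $\fE$, term instantiation and constant elimination); this is the already-known soundness of $\QRC$ for $\HA$ from \cite{Escape, BorgesPhD}.

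For the converse I would argue contrapositively. Assume $\varphi \nvdash_{\QRC} \psi$. By Theorem~\ref{theorem:specific_finitary_completeness} there is a finite constant-domain model $\M$, which fixes the realization $\cdot\ofs$, and standard numbers $\vec{y_0}, \vec{z_0}$ with $\PA \nvdash (\varphi\ofsPA \to \psi\ofsPA)\subst{\vec y}{\vec{y_0}}\subst{\vec z}{\vec{z_0}}$; a fortiori $\PA \nvdash \Forall{\vec y, \vec z}(\varphi\ofsPA \to \psi\ofsPA)$. Since $\cdot\ofs$ is a genuine finitary realization in the sense of Definition~\ref{def:QML:realization} (relation symbols go to arithmetical formulas of the same arity, with $x_k$ translated to $y_k$ and $c_k$ to $z_k$), it suffices to transfer this non-provability to the $\HA$-realization over $\HA$. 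By Lemma~\ref{theorem:HAQrealisationEquivalentToPA} we have $\HA \vdash \varphi\ofsHA \leftrightarrow \varphi\ofsPA$ and $\HA \vdash \psi\ofsHA \leftrightarrow \psi\ofsPA$, hence also over $\PA$, so $\PA \nvdash \Forall{\vec y, \vec z}(\varphi\ofsHA \to \psi\ofsHA)$; and since $\HA \subseteq \PA$ by \eqref{equation:HAincludedInPAFormalized}, $\HA \nvdash \varphi\ofsHA \to \psi\ofsHA$. Taking $\cdot\fs := \cdot\ofs$ then gives the required witnessing realization.

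As the excerpt already notes, the quantified case reduces to the propositional one: the real work is upstream. Lemma~\ref{theorem:quantificationIsConjunction} collapses the unbounded quantifiers appearing in $\cdot\ofsT$ to finite conjunctions, so that Lemma~\ref{theorem:substitutionsForLpLow} keeps every $\varphi\ofsT$ inside $\DCone \subseteq \Sigma_2$ over $\HA$, which is exactly what makes Lemma~\ref{theorem:HAQrealisationEquivalentToPA} (and, behind it, the $\Sigma_2$-conservativity of Lemma~\ref{theorem:DiamondSigmaTwoConservativity}) applicable. Granting those, the argument above is a short chain and I do not expect a genuine obstacle; the only points requiring care are the bookkeeping that $\cdot\ofs$ literally meets Definition~\ref{def:QML:realization}, and the (harmless) passage from the numeral instance produced by Theorem~\ref{theorem:specific_finitary_completeness} to its universal closure.
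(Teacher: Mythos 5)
Your proposal is correct and follows essentially the same route as the paper: soundness by appeal to the known induction, and completeness via the chain Theorem~\ref{theorem:specific_finitary_completeness} $\Rightarrow$ Lemma~\ref{theorem:HAQrealisationEquivalentToPA} $\Rightarrow$ inclusion \eqref{equation:HAincludedInPAFormalized}, with $\cdot\ofs$ as the witnessing realization. The only cosmetic difference is that you pass to the universal closure before descending to $\HA$, whereas the paper keeps the numeral instance $\subst{\vec y}{\vec{y_0}}\subst{\vec z}{\vec{z_0}}$ throughout and generalizes at the end; both orderings are fine.
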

\begin{proof}
  Soundness is proven by a straightforward induction on $\varphi \vdash_{\QRC} \psi$ and was already observed in \cite[Theorem~5.4.3]{BorgesPhD}. For completeness we reason as follows:
  \begin{align*}
    \varphi \nvdash_{\QRC} \psi
      &\implies
        \PA \nvdash (\varphi\ofsPA \to \psi\ofsPA)\subst{\vec y}{\vec{y_0}}\subst{\vec z}{\vec{z_0}}
        &&\text{by Theorem~\ref{theorem:specific_finitary_completeness}}
        \\
      &\implies
        \PA \nvdash (\varphi\ofsHA \to \psi\ofsHA)\subst{\vec y}{\vec{y_0}}\subst{\vec z}{\vec{z_0}}
        &&\text{by Lemma~\ref{theorem:HAQrealisationEquivalentToPA}}
        \\
      &\implies
        \HA \nvdash (\varphi\ofsHA \to \psi\ofsHA)\subst{\vec y}{\vec{y_0}}\subst{\vec z}{\vec{z_0}}
        &&\text{by \eqref{equation:HAincludedInPAFormalized},}
  \end{align*}
where $\vec y, \vec z$ are the free variables of $\varphi\ofsPA$ and $\psi\ofsPA$ (which are the same as the free variables of $\varphi\ofsHA$ and $\psi\ofsHA$) and $\vec{y_0}, \vec{z_0}$ are as given by Theorem~\ref{theorem:specific_finitary_completeness}. Thus there is a realization $\cdot\ofs$ such that $\HA \nvdash \Forall{\vec y, \vec z} (\varphi\ofsHA \to \psi\ofsHA)$, as we wanted to show.
\end{proof}

We now determine $\IQPL(\HA)$, the quantified reflection logic of $\HA$. Just as in the propositional case, we can interpret a finitary realization $\cdot\fs$ as an infinitary one by taking $S(\vec t)\is := (u = \gnum{S(\vec t)\fs})$. However, unlike in the propositional case, the realizations $\varphi\isT$ and $\tau \lor (u = \gnum{\varphi\fs})$ are not necessarily equivalent (contrast with Lemma~\ref{lemma:WDTST}).
In general, one can only prove $T \vdash \Forall \theta \Forall{\vec y, \vec z} (\Box_{\varphi\isT} \theta \to \Box_{\tau} (\varphi\fsT \to \theta))$ (see \cite[Lemma~5.2.8]{BorgesPhD} and the following discussion), but in the specific case of the realization $\ofs$ above we can indeed prove the equivalence.

Let $\cdot\ois$ be the infinitary counterpart to $\cdot\ofs$, i.e., $S(\vec t)\ois := (u = \gnum{S(\vec t)\ofs})$.
Note that $\varphi\oisT$ has same free variables as $\varphi\ofsT$ for any $\varphi$.

\begin{lemma}
\label{lem:same_thing}

  Let $\eta$ denote a standard axiomatization of $\HA$ and let $\varphi$ be a $\QRC$ formula with free variables $\vec{x}$ and constants $\vec{c}$. Then:
  \begin{equation*}
    \HA \vdash
      \Forall{\theta} \Forall{\vec y, \vec z} (
        \Box_{\varphi\oisHA} \theta \leftrightarrow \Box_\HA (\varphi\ofsHA \to \theta)
      )
    ,
  \end{equation*}
  where $\theta$ is (the Gödel number of) a closed formula.
\end{lemma}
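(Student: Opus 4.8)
The plan is to prove this by external induction on the structure of the $\QRC$ formula $\varphi$, mimicking the propositional Lemma~\ref{lemma:WDTST} but keeping track of the free variables $\vec y, \vec z$ throughout. Recall that $\varphi\oisHA$ is obtained from $\varphi\ofsHA$ exactly as in Definition~\ref{def:isT}: the base case replaces $S(\vec t)\ofsHA$ by $S(\vec t)\ois \lor \eta(u) = (u = \gnum{S(\vec t)\ofsHA}) \lor \eta(u)$, conjunction becomes disjunction of axiomatizations, $\Diamond$ is handled by the consistency-statement clause, and the universal quantifier $\Forall{x_i}$ becomes $\Exists{y_i}$ in the axiomatization. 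The whole claim is to be proved inside $\HA$, with $\theta$ ranging over (Gödel numbers of) closed sentences and $\vec y, \vec z$ over arbitrary numbers.

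First I would handle the base case: when $\varphi = S(\vec t)$, the axiomatization $\varphi\oisHA$ is $\eta$ together with the single extra axiom $S(\vec t)\ofsHA$ (for the given values of $\vec y, \vec z$), so $\Box_{\varphi\oisHA}\theta$ is provably equivalent in $\HA$ to $\Box_\HA(S(\vec t)\ofsHA \to \theta)$ — this is the elementary deduction-theorem-style fact about adding one axiom, formalized in $\HA$ (it is the $\QRC$ analogue of the observation recorded just before Lemma~\ref{lemma:WDTST}). For $\varphi \land \delta$, the axiomatization is $\varphi\oisHA \lor \delta\oisHA$, i.e.\ the union of the two axiom sets, and one uses the formalized fact that $\Box_{\alpha \lor \beta}\theta \leftrightarrow \Box_\HA(A_\alpha \land A_\beta \to \theta)$ where $A_\alpha, A_\beta$ are the (finite conjunctions of) extra axioms; by the induction hypothesis applied to $\varphi$ and to $\delta$ this rearranges to $\Box_\HA(\varphi\ofsHA \land \delta\ofsHA \to \theta) = \Box_\HA((\varphi\land\delta)\ofsHA \to \theta)$. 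For $\Diamond \psi$, the axiomatization is $\eta$ plus the single axiom $\Diamond_{\psi\oisHA}\top$, so again by the one-axiom fact $\Box_{(\Diamond\psi)\oisHA}\theta \leftrightarrow \Box_\HA(\Diamond_{\psi\oisHA}\top \to \theta)$; here I need to know that $\Diamond_{\psi\oisHA}\top$ is $\HA$-provably equivalent to $(\Diamond\psi)\ofsHA = \Diamond_\HA(\psi\ofsHA)$, which again follows from the induction hypothesis on $\psi$ (provability in $\psi\oisHA$ equals provability-from-$\psi\ofsHA$ in $\HA$, so their consistency statements coincide), formalized in $\HA$.

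The genuinely new case, and the one I expect to be the main obstacle, is the quantifier $\Forall{x_i}\psi$, whose infinitary realization is $\Exists{y_i}\,\psi\oisHA$ — an axiomatization that is a genuine union (indexed by $y_i \in \N$) of the axiomatizations $\psi\oisHA$. Here I would invoke the crucial feature of the realization $\cdot\ofs$: by Lemma~\ref{theorem:quantificationIsConjunction}, $\psi\ofsHA$ depends on $y_i$ only modulo $\overline m$, so the ostensibly infinite union $\bigcup_{y_i}\psi\oisHA$ is $\HA$-provably equal to the finite union $\bigcup_{y_i \in \{\overline 0, \ldots, \overline{m-1}\}}\psi\oisHA$, which is just the axiomatization of an $m$-fold conjunction. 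Then $\Box_{(\Forall{x_i}\psi)\oisHA}\theta \leftrightarrow \Box_\HA\big(\bigwedge_{y_i < \overline m}\psi\ofsHA \to \theta\big)$ by the finite case already treated, and by Lemma~\ref{theorem:quantificationIsConjunction}(2) again $\bigwedge_{y_i < \overline m}\psi\ofsHA$ is $\HA$-provably equivalent to $\Forall{y_i}\psi\ofsHA = (\Forall{x_i}\psi)\ofsHA$, closing the induction. The care needed is that all these finiteness-reductions and their equivalences must themselves be verified to be formalizable in $\HA$ — but this is exactly what Lemma~\ref{theorem:quantificationIsConjunction} provides, and the rest is the same bookkeeping with Gödel numbers of axiomatizations that underlies \cite[Lemma~5.2.8]{BorgesPhD}.
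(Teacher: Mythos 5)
Your proposal is correct and takes essentially the same route as the paper, which proves the lemma by external induction on $\varphi$, deferring the details to the $\PA$ version in \cite[Lemma~8.8]{Escape}; your handling of the quantifier case---reducing the infinite union of axiomatizations to a finite one via Lemma~\ref{theorem:quantificationIsConjunction}---is precisely the crux of that argument. The only nuance worth noting is that the axiom sets $\psi\oisHA$ for $y_i$ and $y_i+m$ are not literally equal (the G\"odel numbers of the substituted numerals differ), only $\HA$-provably deductively equivalent, which is what the formalized finite-proof argument actually uses.
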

\begin{proof}
  By external induction on $\varphi$. The proof is presented for $\PA$ in \cite[Lemma~8.8]{Escape} and adapts without any problem to $\HA$.
\end{proof}

With this reduction from quantified to propositional we can now determine the quantified reflection logic of $\HA$.

\begin{theorem}\label{theorem:QRefLogicHAisQRC}
$\IQPL(\HA) = \QRC$.
\end{theorem}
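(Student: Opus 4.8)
The plan is to mirror the propositional argument of Theorem~\ref{theorem:RefLogicHAisRCOne} in the quantified setting, using the machinery assembled in this section. Soundness, i.e.\ $\QRC \subseteq \IQPL(\HA)$, I would dispose of first: it follows from a routine external induction on $\QRC$-derivations, exactly as the soundness half of Theorem~\ref{theorem:QPLpOfHAisQRC}, and was already noted in \cite{Escape}. So the real content is the completeness direction: assuming $\varphi \nvdash_{\QRC} \psi$, exhibit an infinitary realization witnessing $\la \varphi, \psi \ra \notin \IQPL(\HA)$.

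For completeness I would unwind the proof of Theorem~\ref{theorem:QPLpOfHAisQRC}: via Theorem~\ref{theorem:specific_finitary_completeness}, Lemma~\ref{theorem:HAQrealisationEquivalentToPA}, and \eqref{equation:HAincludedInPAFormalized}, the assumption $\varphi \nvdash_{\QRC} \psi$ yields a finite constant-domain model $\M$, its associated finitary realization $\cdot\ofs$, and standard numerals $\vec{y_0}, \vec{z_0}$ for the free variables $\vec y, \vec z$ of $\varphi\ofsHA \land \psi\ofsHA$ such that
\begin{equation*}
  \HA \nvdash (\varphi\ofsHA \to \psi\ofsHA)\subst{\vec y}{\vec{y_0}}\subst{\vec z}{\vec{z_0}}.
\end{equation*}
I would then pass to the infinitary counterpart $\cdot\ois$ defined by $S(\vec t)\ois := (u = \gnum{S(\vec t)\ofs})$, and set the closed sentence $\theta := (\psi\ofsHA)\subst{\vec y}{\vec{y_0}}\subst{\vec z}{\vec{z_0}}$. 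The claim is that $\cdot\ois$ refutes $\Forall{\theta}\Forall{\vec y, \vec z}(\Box_{\psi\oisHA}\theta \to \Box_{\varphi\oisHA}\theta)$ over $\HA$, precisely at the point $\theta, \vec{y_0}, \vec{z_0}$.

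To verify this I would invoke Lemma~\ref{lem:same_thing} twice. Instantiating it for $\psi$ with the closed sentence $\theta$ and with $\vec y := \vec{y_0}$, $\vec z := \vec{z_0}$ gives $\HA \vdash (\Box_{\psi\oisHA}\theta)\subst{\vec y}{\vec{y_0}}\subst{\vec z}{\vec{z_0}} \leftrightarrow \Box_\HA(\theta \to \theta)$, hence $\HA \vdash (\Box_{\psi\oisHA}\theta)\subst{\vec y}{\vec{y_0}}\subst{\vec z}{\vec{z_0}}$; instantiating it for $\varphi$ the same way gives $\HA \vdash (\Box_{\varphi\oisHA}\theta)\subst{\vec y}{\vec{y_0}}\subst{\vec z}{\vec{z_0}} \leftrightarrow \Box_\HA((\varphi\ofsHA \to \psi\ofsHA)\subst{\vec y}{\vec{y_0}}\subst{\vec z}{\vec{z_0}})$. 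Now if $\HA$ proved $\Forall{\theta}\Forall{\vec y, \vec z}(\Box_{\psi\oisHA}\theta \to \Box_{\varphi\oisHA}\theta)$, then instantiating at $\theta, \vec{y_0}, \vec{z_0}$ and combining with the first fact would yield $\HA \vdash \Box_\HA((\varphi\ofsHA \to \psi\ofsHA)\subst{\vec y}{\vec{y_0}}\subst{\vec z}{\vec{z_0}})$; since $\Box_\HA$ is $\Sigma_1$, by $\Sigma_1$-soundness of $\HA$ we would get $\HA \vdash (\varphi\ofsHA \to \psi\ofsHA)\subst{\vec y}{\vec{y_0}}\subst{\vec z}{\vec{z_0}}$, contradicting the choice of $\vec{y_0}, \vec{z_0}$. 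Hence $\la \varphi, \psi \ra \notin \IQPL(\HA)$, completing the argument.

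The only step needing genuine care beyond the propositional Theorem~\ref{theorem:RefLogicHAisRCOne} is the bookkeeping of the free variables $\vec y, \vec z$: because $\varphi\oisHA$ and $\psi\oisHA$ carry these free variables, the universal quantifier in the definition of $\IQPL(\HA)$ ranges over $\theta$ together with $\vec y, \vec z$, so one cannot simply take $\theta$ to be $\psi\ofsHA$ as in the propositional case — one must instantiate $\psi\ofsHA$ at the numerals $\vec{y_0}, \vec{z_0}$ to obtain a genuinely closed witness, and choose the outer numerical witnesses to match, and then check that all these substitutions commute correctly through Lemma~\ref{lem:same_thing}. I expect this substitution bookkeeping to be the main (though mild) obstacle; everything else is a direct transcription of the propositional proof.
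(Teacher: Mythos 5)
Your proposal is correct and follows essentially the same route as the paper's proof: soundness via the induction from \cite{Escape}, and completeness by pulling back the finitary counterexample of Theorem~\ref{theorem:QPLpOfHAisQRC} through Lemma~\ref{lem:same_thing} and a $\Sigma_1$-soundness argument, exactly as in the propositional Theorem~\ref{theorem:RefLogicHAisRCOne}. Your explicit instantiation of $\theta$ at the numerals $\vec{y_0}, \vec{z_0}$ to obtain a closed witness is the same bookkeeping the paper handles (somewhat more tersely) with its ``for arbitrary $\vec y, \vec z$'' remark.
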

\begin{proof}
  The inclusion $\QRC \subseteq \IQPL(\HA)$ is a straightforward induction and is proved in \cite[Theorem~10.3]{Escape}. For the other direction, we reason as in the proof of Theorem~\ref{theorem:RefLogicHAisRCOne} and assume that $\varphi \nvdash_{\QRC} \psi$.

  By Theorem~\ref{theorem:QPLpOfHAisQRC} we find $\cdot\ofs$ and $\vec{y_0}, \vec{z_0}$ such that:
  \begin{equation}\label{equation:bla}
    \HA \nvdash (\varphi\ofsHA \to \psi\ofsHA)\subst{\vec y}{\vec{y_0}}\subst{\vec z}{\vec{z_0}}.
  \end{equation}
  By Lemma~\ref{lem:same_thing} we get for the corresponding $\cdot\oisHA$ that for arbitrary $\vec y, \vec z$ we have $\HA \vdash \Box_{\psi\oisHA} \psi\ofsHA$. Again by Lemma~\ref{lem:same_thing} and using \eqref{equation:bla}, in complete analogy with the proof of Theorem~\ref{theorem:RefLogicHAisRCOne}, we see that $\HA \nvdash \Box_{\varphi\oisHA} \psi\ofsHA$, so $\HA \nvdash \Forall{\theta} \Forall{\vec{y}, \vec{z}} (\Box_{\psi\oisHA} \theta \to \Box_{\varphi\oisHA} \theta)$ as was to be shown.
\end{proof}

\section{On interpolation and the fixpoint theorem}\label{section:InterpolationAndFP}

The interpolation and fixpoint theorems are important properties of theories and logics. In this section we consider them in the setting of $\QRC$.

\subsection{Interpolation}

The predicate provability logic of Peano Arithmetic does not have the interpolation property (due to Vardanyan, see \cite{Boolos1993}). Since $\QPL(\PA)$ fails interpolation, it is important to ask whether our fragment does satisfy interpolation. It turns out that indeed, both $\RC_1$ and $\QRC$ have interpolation, and the proof is rather trivial.

\begin{theorem}
  Let $T$ be either $\RC$ or $\QRC$. If $\varphi \vdash_T \psi$, then there is a formula $\chi$ in the common language of $\varphi$ and $\psi$ such that both $\varphi \vdash_T \chi$ and $\chi \vdash_T \psi$.
\end{theorem}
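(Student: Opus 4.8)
The plan is to exhibit an explicit interpolant built directly from $\varphi$, using the fact that strictly positive logics have essentially no "hidden" propositional structure: the connectives available ($\top$, $\land$, $\Diamond$, and in the quantified case $\forall$) are all monotone, so from a derivation $\varphi \vdash_T \psi$ one can read off which subformulas of $\psi$ are "witnessed" inside $\varphi$. Concretely, I would take $\chi$ to be the conjunction of all subformulas of $\psi$ that are built entirely from symbols (predicate/propositional symbols, constants) occurring in $\varphi$, together with $\top$ as a default so that $\chi$ is never empty; in fact the cleanest choice is to let $\chi$ be obtained from $\psi$ by replacing every maximal subformula whose head symbol is not common to $\varphi$ by $\top$. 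One then checks that this $\chi$ lies in the common language and that both halves $\varphi \vdash_T \chi$ and $\chi \vdash_T \psi$ hold.

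First I would make precise the replacement operation: scanning $\psi$ top-down, whenever I meet a subformula $S(\vec t)$ (or a propositional variable $p$) whose relation symbol $S$ (resp.\ $p$) or one of whose constants does not occur in $\varphi$, I replace that atom by $\top$; I leave $\land$, $\Diamond$, $\forall$ untouched. Call the result $\chi$. By construction every symbol in $\chi$ occurs in both $\varphi$ and $\psi$, so $\chi$ is in the common language. The direction $\chi \vdash_T \psi$ is immediate: $\psi$ is obtained from $\chi$ by putting back atoms in place of certain occurrences of $\top$, and since $\alpha \vdash_T \top$ for all $\alpha$ and the logic is monotone in each argument position (conjunction elimination/introduction, necessitation, and quantifier rules all preserve $\vdash$ under replacing a subformula by a $\vdash$-weaker one), we get $\chi \vdash_T \psi$ — more simply, $\top \vdash_T$-weakening shows $\psi \vdash_T \chi$ is false in general but $\chi$ is literally $\psi$ with some atoms weakened to $\top$, hence $\psi \vdash_T \chi$; wait — I need the other direction, so I should instead argue that replacing $\top$ by an atom only strengthens, giving $\chi \vdash_T \psi$ directly by the congruence/monotonicity of $\vdash_T$ under the positive connectives.

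The substantive direction is $\varphi \vdash_T \chi$. Here I would use the completeness/semantic characterization rather than a syntactic induction: for $\RC_1$ use the Kripke semantics of Theorem~\ref{theorem:modalCompletenessGLandRC}, and for $\QRC$ use Theorem~\ref{theorem:QRCcomplete}. Given a model (and assignment) refuting $\varphi \vdash_T \chi$, i.e.\ forcing $\varphi$ at the root but not $\chi$, I modify it to a model over the signature of $\varphi$ only — the symbols appearing in $\chi$ but not in $\varphi$ simply do not occur, so I can freely choose the interpretation of the *extra* symbols of $\psi$ to make $\chi$ false while keeping $\varphi$ true (since $\varphi$ does not mention them). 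Then this model also refutes $\varphi \vdash_T \psi$ because $\psi$ is forced iff $\chi$ together with the extra atoms is, and we arranged the extra atoms to fail exactly where needed. Contrapositively, $\varphi \vdash_T \psi$ gives $\varphi \vdash_T \chi$.

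The main obstacle I anticipate is bookkeeping in the quantified case: in $\QRC$ one must be careful that the "extra" atoms of $\psi$ can genuinely be reinterpreted independently of $\varphi$'s atoms at every world and for every assignment, including when a shared atom and an extra atom occur nested inside the same $\forall$ or $\Diamond$; constant-domain completeness (Theorem~\ref{theorem:QRCcomplete}) helps here since the domain is fixed, but one still has to verify that setting the extra predicates to $\varnothing$ (or to whatever makes $\chi$ false at the relevant world/assignment) does not disturb the forcing of $\varphi$. A cleaner alternative, which I would fall back on if the semantic argument gets delicate, is a direct induction on a $\QRC$-derivation of $\varphi \vdash_T \psi$, tracking at each step an interpolant and using the ten rules one by one — tedious but entirely routine, which is presumably why the authors call the proof "rather trivial."
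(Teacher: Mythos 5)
There is a genuine gap, and it sits exactly where the paper's one observation does all the work. Your interpolant $\chi$ is $\psi$ with every atom whose symbol is foreign to $\varphi$ replaced by $\top$, and you claim $\chi \vdash_T \psi$ ``by monotonicity.'' That is the wrong direction: since $\alpha \vdash_T \top$ for every $\alpha$ and all contexts in a strictly positive language are monotone, replacing an atom $q$ of $\psi$ by $\top$ \emph{weakens} the formula, i.e.\ it yields $\psi \vdash_T \chi$, not $\chi \vdash_T \psi$ (concretely, $\top \nvdash_T q$, so a context $C[\top] \vdash_T C[q]$ is simply false in general). You in fact notice this mid-argument and then talk yourself back into the false principle. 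As written, the half $\chi \vdash_T \psi$ of your proof fails whenever $\psi$ genuinely contains an atom not occurring in $\varphi$. Your semantic argument for the other half, $\varphi \vdash_T \chi$, is fine (indeed it is more machinery than needed, since $\psi$ forces $\chi$ pointwise in any model), but it cannot rescue the broken half.

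The missing ingredient --- which is the entire content of the paper's proof --- is the observation that the problematic case never arises: an easy induction on the ten rules of $\QRC$ (resp.\ the rules of $\RC_1$) shows that in any derivation of $\varphi \vdash_T \psi$, every propositional or relation symbol occurring in $\psi$ already occurs in $\varphi$. Every rule either copies the right-hand side, shrinks it, or builds it from symbols of the left-hand side, so the invariant propagates. Once this is established, your replacement operation is vacuous, $\chi$ is literally $\psi$, and one may take $\chi := \psi$ outright; both $\varphi \vdash_T \psi$ and $\psi \vdash_T \psi$ are then immediate and no semantics is needed. Without that lemma your construction is not a proof; with it, your construction collapses to the paper's. (Your fallback suggestion of a Maehara-style induction carrying an interpolant through each rule would also work, but you do not carry it out, and the symbol-containment observation is what makes it, in the authors' words, ``rather trivial.'')
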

\begin{proof}
By an easy induction on a proof of $\varphi \vdash_T \psi$ we observe that all symbols occurring in $\psi$ already occur in $\varphi$. Consequently, we can take $\chi := \psi$, that is, $\psi$ is the required interpolant.
\end{proof}

\subsection{Fixpoint theorem}

The fixpoint theorem for $\GL$ states that for each formula $\varphi$ where all appearances of $p$ occur under the scope of a modality, we can find a fixpoint $\chi$ of $\varphi$ where $p$ does not occur such that $\GL \vdash \boxdot (p \leftrightarrow \varphi) \to (p \leftrightarrow \chi)$ (where $\boxdot \delta := \delta \land \Box \delta$).

We observe that the fixpoint theorem cannot really be formulated in our framework. The best we could do is to demand both $p \vdash \varphi(p, \vec q)$ and $\varphi(p, \vec q) \vdash p$ as antecedents (with $p$ occurring modalized in $\varphi$) of our version of the fixpoint theorem. Note, however, that $p \nvdash \varphi(p, \vec q)$ for any $\varphi$ where $p$ occurs modalized.
This is due to the following elementary observation.

\begin{lemma}[\protect{\cite[Lemma~2.4]{QRC1}}]
  Let $\mdepth(\varphi)$ be the modal depth (maximum number of nested diamonds) of $\varphi$. If $\varphi \vdash_{\RC_1} \psi$ or $\varphi \vdash_{\QRC} \psi$, then $\mdepth(\varphi) \geq \mdepth(\psi)$.
\end{lemma}
\begin{proof}
By an easy induction on the length of a $\RC_1$ or $\QRC$ proof.
\end{proof}

Thus, $p$ cannot prove any formula $\varphi$ that is modalized in $p$ (since the only available modality is the diamond). Our framework is hence not expressive enough to formulate a sensible version of the fixpoint theorem. Alternatively, we could state that the fixpoint theorem holds vacuously true.

\section*{Acknowledgments}

We are very grateful to Dick de Jongh, Mojtaba Mojtahedi, and Albert Visser, who helped with the intuitionistic mathematics involved in this work. Their reading, comments, suggestions and discussions substantially helped the presentation of this paper.


\providecommand\noopsort[1]{}\providecommand{\noopsort}[1]{}\providecommand{\noopsort}[1]{}


\begin{thebibliography}{58}
\expandafter\ifx\csname natexlab\endcsname\relax\def\natexlab#1{#1}\fi
\expandafter\ifx\csname url\endcsname\relax
  \def\url#1{{\tt #1}}\fi
\expandafter\ifx\csname urlprefix\endcsname\relax\def\urlprefix{URL }\fi

\bibitem[{{\noopsort{Almeida Borges}de Almeida Borges}(2023)}]{BorgesPhD}
{\noopsort{Almeida Borges}de Almeida Borges}, A. (2023).
\newblock {\em Suitable logics: provability, temporal laws, and formalization\/}.
\newblock Ph.D. thesis, University of Barcelona.
\newline\urlprefix\url{https://mat.ub.edu/pop/doctorat/tesis/Thesis_Ana_Almeida.pdf}

\bibitem[{{\noopsort{Almeida Borges}de Almeida Borges} \& Joosten(2020)}]{QRC1}
{\noopsort{Almeida Borges}de Almeida Borges}, A., \& Joosten, J.~J. (2020).
\newblock {Q}uantified {R}eflection {C}alculus with one modality.
\newblock In N.~Olivetti, R.~Verbrugge, S.~Negri, \& G.~Sandu (Eds.) {\em Advances in Modal Logic 13\/}, (pp. 13--32). College Publications.
\newline\urlprefix\url{http://www.aiml.net/volumes/volume13/deAlmeidaBorges-Joosten.pdf}

\bibitem[{{\noopsort{Almeida Borges}de Almeida Borges} \& Joosten(2023)}]{Escape}
{\noopsort{Almeida Borges}de Almeida Borges}, A., \& Joosten, J.~J. (2023).
\newblock An escape from {V}ardanyan's {T}heorem.
\newblock {\em The Journal of Symbolic Logic\/}, {\em 88\/}(4), 1613--1638.
\newline\urlprefix\url{https://www.cambridge.org/core/journals/journal-of-symbolic-logic/article/an-escape-from-vardanyans-theorem/03E13D4C282563F918AF77CF6E2830DA}

\bibitem[{Artemov \& Beklemishev(2004)}]{arte:prov04}
Artemov, S., \& Beklemishev, L. (2004).
\newblock Provability logic.
\newblock In D.~Gabbay, \& F.~Guenthner (Eds.) {\em {Handbook of Philosophical Logic}, 2nd ed.\/}, vol.~13, (pp. 229--403). Dordrecht: Springer.

\bibitem[{Artemov(1986)}]{Artemov1986}
Artemov, S.~N. (1986).
\newblock Numerically correct logics of provability.
\newblock {\em Doklady Akademii Nauk SSSR\/}, {\em 290\/}(6), 1289--1292.
\newblock In Russian.

\bibitem[{Artemov \& Japaridze(1990)}]{ArtemovJaparidze1990}
Artemov, S.~N., \& Japaridze, G.~K. (1990).
\newblock Finite {K}ripke models and predicate logics of provability.
\newblock {\em Journal of Symbolic Logic\/}, {\em 55\/}(3), 1090--1098.

\bibitem[{Beklemishev(2005)}]{Beklemishev2005}
Beklemishev, L.~D. (2005).
\newblock {V}eblen hierarchy in the context of provability algebras.
\newblock In P.~H\'ajek, L.~Vald\'es-Villanueva, \& D.~{Westerst\r ahl} (Eds.) {\em Logic, Methodology and Philosophy of Science, Proceedings of the Twelfth International Congress\/}, (pp. 65--78). Kings College Publications.

\bibitem[{Beklemishev(2012)}]{Beklemishev2012}
Beklemishev, L.~D. (2012).
\newblock Calibrating provability logic: From modal logic to {R}eflection {C}alculus.
\newblock In T.~Bolander, T.~Braüner, T.~S. Ghilardi, \& L.~Moss (Eds.) {\em Advances in Modal Logic 9\/}, (pp. 89--94). London: College Publications.

\bibitem[{Beklemishev(2018)}]{Bek18b}
Beklemishev, L.~D. (2018).
\newblock A note on strictly positive logics and word rewriting systems.
\newblock In S.~Odintsov (Ed.) {\em Larisa Maximova on Implication, Interpolation, and Definability\/}, vol.~15, (pp. 61--70). Berlin, Heidelberg: Springer.

\bibitem[{Beklemishev(2021)}]{Beklemishev2021_slides}
Beklemishev, L.~D. (2021).
\newblock Strictly positive provability logics: Recent progress and open questions.
\newline\urlprefix\url{http://yongcheng.whu.edu.cn/webPageContent/Goedel2021/Goedel-Lev.pdf}

\bibitem[{Beklemishev et~al.(2014)Beklemishev, Fernández-Duque, \& Joosten}]{BeklemishevFernandezDuqueJJJ2014}
Beklemishev, L.~D., Fernández-Duque, D., \& Joosten, J.~J. (2014).
\newblock On provability logics with linearly ordered modalities.
\newblock {\em Studia Logica\/}, {\em 102\/}, 541--566.

\bibitem[{Berarducci(1989)}]{Berarducci1989}
Berarducci, A. (1989).
\newblock ${\Sigma}^0_n$-interpretations of modal logic.
\newblock {\em Bollettino dell’Unione Matematica Italiana\/}, {\em 7\/}(3-A), 177--184.

\bibitem[{Boolos(1993)}]{Boolos1993}
Boolos, G.~S. (1993).
\newblock {\em The logic of provability\/}.
\newblock Cambridge: Cambridge University Press.

\bibitem[{Boolos \& McGee(1987)}]{BoolosMcGee1987}
Boolos, G.~S., \& McGee, V.~R. (1987).
\newblock The degree of the set of sentences of predicate provability logic that are true under every interpretation.
\newblock {\em The Journal of Symbolic Logic\/}, {\em 52\/}, 165--171.

\bibitem[{Dashkov(2012)}]{Dashkov2012}
Dashkov, E.~V. (2012).
\newblock On the positive fragment of the polymodal provability logic {GLP}.
\newblock {\em Mathematical Notes\/}, {\em 91\/}(3--4), 318--333.

\bibitem[{de~Jongh(1970)}]{dejo:maxi70}
de~Jongh, D. (1970).
\newblock The maximality of the intuitionistic predicate calculus with respect to {H}eyting's {A}rithmetic.
\newblock {\em The Journal of Symbolic Logic\/}, {\em 36\/}, 606.

\bibitem[{de~Jongh et~al.(1991)de~Jongh, Jumelet, \& Montagna}]{JonghJumeletMontagna:1991}
de~Jongh, D., Jumelet, M., \& Montagna, F. (1991).
\newblock On the proof of {S}olovay's theorem.
\newblock {\em Studia Logica\/}, {\em 50\/}, 51--69.

\bibitem[{Fagin et~al.(1995)Fagin, Halpern, Moses, \& Vardi}]{Fagin1995}
Fagin, R., Halpern, J.~Y., Moses, Y., \& Vardi, M.~Y. (1995).
\newblock {\em Reasoning About Knowledge\/}.
\newblock Cambridge, MA: {MIT} Press.

\bibitem[{Fern\'andez-Duque \& Joosten(2014)}]{FernandezDuqueJJJ2014}
Fern\'andez-Duque, D., \& Joosten, J.~J. (2014).
\newblock Well-orders in the transfinite {J}aparidze algebra.
\newblock {\em Logic Journal of the {IGPL}\/}, {\em 22\/}(6), 933--963.

\bibitem[{Friedman(1975)}]{frie:disj75}
Friedman, H. (1975).
\newblock The disjunction property implies the numerical existence property.
\newblock {\em Proceedings of the National Academy of Sciences\/}, {\em 72\/}(8), 2877--2878.

\bibitem[{Friedman(1978)}]{Friedman1978}
Friedman, H. (1978).
\newblock Classically and intuitionistically provably recursive functions.
\newblock In D.~S. Scott, \& G.~H. Muller (Eds.) {\em Higher Set Theory\/}, vol. 699, (pp. 21--28). Springer Verlag.

\bibitem[{Fujiwara \& Kurahashi(2023)}]{FujiwaraKurahashi2023}
Fujiwara, M., \& Kurahashi, T. (2023).
\newblock Prenex normalization and the hierarchical classification of formulas.
\newblock {arXiv:2302.11808 [math.LO]}.

\bibitem[{Gabbay et~al.(2003)Gabbay, Kurucz, Wolter, \& Zakharyaschev}]{Gabbay2003}
Gabbay, D.~M., Kurucz, A., Wolter, F., \& Zakharyaschev, M. (2003).
\newblock {\em Many-dimensional modal logics: Theory and applications\/}.
\newblock Elsevier.

\bibitem[{Gabbay et~al.(2009)Gabbay, Shehtman, \& Skvortsov}]{Gabbay2009}
Gabbay, D.~M., Shehtman, V.~B., \& Skvortsov, D.~P. (2009).
\newblock {\em Quantification in Nonclassical Logic, Volume 1\/}.
\newblock Amsterdam: Elsevier.

\bibitem[{Goldblatt(2011)}]{Goldblatt2011}
Goldblatt, R. (2011).
\newblock {\em Quantifiers, propositions and identity, admissible semantics for quantified modal and substructural logics\/}.
\newblock Cambridge University Press.

\bibitem[{H\'ajek \& Pudl\'ak(1993)}]{HajekPudlak:1993:Metamathematics}
H\'ajek, P., \& Pudl\'ak, P. (1993).
\newblock {\em Metamathematics of {F}irst {O}rder {A}rithmetic\/}.
\newblock Berlin, Heidelberg, New York: Springer-{V}erlag.

\bibitem[{Hao \& Tourlakis(2021)}]{HaoTourlakis2021}
Hao, Y., \& Tourlakis, G. (2021).
\newblock An arithmetically complete predicate modal logic.
\newblock {\em Bulletin of the Section of Logic\/}.
\newline\urlprefix\url{https://czasopisma.uni.lodz.pl/bulletin/article/view/8441}

\bibitem[{Hughes \& Cresswell(1996)}]{HughesCresswell1996}
Hughes, G.~E., \& Cresswell, M.~J. (1996).
\newblock {\em A New Introduction to Modal Logic\/}.
\newblock Routledge.

\bibitem[{Iemhoff(2001)}]{iemh:admi01}
Iemhoff, R. (2001).
\newblock On the admissible rules of intuitionistic propositional logic.
\newblock {\em Journal of Symbolic Logic\/}, (pp. 281--294).

\bibitem[{Japaridze(1988)}]{Japaridze1988}
Japaridze, G.~K. (1988).
\newblock The polymodal provability logic.
\newblock In {\em Intensional logics and logical structure of theories: material from the fourth {S}oviet-{F}innish symposium on logic, {T}elavi, May 20--24, 1985\/}, (pp. 16--48). Metsniereba.
\newblock In {R}ussian.

\bibitem[{Kikot et~al.(2019)Kikot, Kurucz, Tanaka, Wolter, \& Zakharyaschev}]{Kikot2019}
Kikot, S., Kurucz, A., Tanaka, Y., Wolter, F., \& Zakharyaschev, M. (2019).
\newblock Kripke completeness of strictly positive modal logics over meet-semilattices with operators.
\newblock {\em The Journal of Symbolic Logic\/}, {\em 84\/}(2), 533--588.

\bibitem[{Kontchakov et~al.(2005)Kontchakov, Kurucz, \& Zakharyaschev}]{Kontchakov2005}
Kontchakov, R., Kurucz, A., \& Zakharyaschev, M. (2005).
\newblock Undecidability of first-order intuitionistic and modal logics with two variables.
\newblock {\em Bulletin of Symbolic Logic\/}, {\em 11\/}(3), 428--438.

\bibitem[{Kreisel(1951)}]{Kreisel1951}
Kreisel, G. (1951).
\newblock On the interpretation of non-finitist proofs — {P}art {I}.
\newblock {\em The Journal of Symbolic Logic\/}, {\em 16\/}(4), 241--267.

\bibitem[{Kripke(1963)}]{Kripke1963}
Kripke, S.~A. (1963).
\newblock Semantical considerations on modal logic.
\newblock {\em Acta Philosophica Fennica\/}, {\em 16\/}, 83--94.

\bibitem[{Kurahashi(2013)}]{Kurahashi2013}
Kurahashi, T. (2013).
\newblock On predicate provability logics and binumerations of fragments of {P}eano {A}rithmetic.
\newblock {\em Archive for Mathematical Logic\/}, {\em 52\/}, 871--880.

\bibitem[{Kurahashi(2022)}]{Kurahashi2022}
Kurahashi, T. (2022).
\newblock On inclusions between quantified provability logics.
\newblock {\em Studia Logica\/}, {\em 110\/}, 165--188.

\bibitem[{Kuroda(1951)}]{Kuroda1951}
Kuroda, S. (1951).
\newblock {I}ntuitionistische {U}ntersuchungen der formalistischen {L}ogik.
\newblock {\em Nagoya Mathematical Journal\/}, {\em 2\/}, 35--47.

\bibitem[{Ladner(1977)}]{Ladner1977}
Ladner, R.~E. (1977).
\newblock The computational complexity of provability in systems of modal propositional logic.
\newblock {\em SIAM journal on computing\/}, {\em 6\/}(3), 467--480.

\bibitem[{Leivant(1979)}]{leiv:abso75}
Leivant, D. (1979).
\newblock {\em {Absoluteness in intuitionistic logic}\/}, vol.~73.
\newblock Amsterdam: Mathematical Centre Tract.
\newblock The Thesis was originally published in 1975.

\bibitem[{Litak \& Wolter(2005)}]{LitakWolter2005}
Litak, T., \& Wolter, F. (2005).
\newblock All finitely axiomatizable tense logics of linear time flows are {coNP}-complete.
\newblock {\em Studia Logica\/}, {\em 81\/}, 153--165.

\bibitem[{McGee(1985)}]{McGee1985}
McGee, V.~R. (1985).
\newblock {\em Truth and Necessity in Partially Interpreted Languages\/}.
\newblock Ph.D. thesis, University of California, Berkeley.

\bibitem[{Mojtahedi(2022)}]{Mojtahedi2022}
Mojtahedi, M. (2022).
\newblock On provability logic of {HA}.
\newblock {arXiv:2206.00445 [math.LO]}.

\bibitem[{Myhill(1973)}]{myhill73}
Myhill, J. (1973).
\newblock A note on indicator functions.
\newblock {\em Proceedings of the AMS\/}, {\em 39\/}, 181--183.

\bibitem[{Pakhomov(2014)}]{Pakhomov2014}
Pakhomov, F. (2014).
\newblock On the complexity of the closed fragment of {J}aparidze's provability logic.
\newblock {\em Archive for Mathematical Logic\/}, {\em 53\/}(7--8), 949--967.

\bibitem[{Rybakov \& Shkatov(2019)}]{RybakovShkatov2019}
Rybakov, M., \& Shkatov, D. (2019).
\newblock Undecidability of first-order modal and intuitionistic logics with two variables and one monadic predicate letter.
\newblock {\em Studia Logica\/}, {\em 107\/}(4), 695--717.

\bibitem[{Rybakov(1997)}]{ryba:admi97}
Rybakov, V.~V. (1997).
\newblock {\em {Admissibility of logical inference rules}\/}.
\newblock Studies in Logic. Amsterdam: Elsevier.

\bibitem[{Shapirovsky(2008)}]{Shapirovsky2008}
Shapirovsky, I. (2008).
\newblock {PSPACE}-decidability of {J}aparidze's polymodal logic.
\newblock In C.~Areces, \& R.~Goldblatt (Eds.) {\em Advances in Modal Logic 7\/}, (pp. 289--304). College Publications.

\bibitem[{Solovay(1976)}]{Solovay:1976}
Solovay, R.~M. (1976).
\newblock Provability interpretations of modal logic.
\newblock {\em Israel Journal of Mathematics\/}, {\em 28\/}, 33--71.

\bibitem[{Svyatlovskiy(2018)}]{Svyatlovskiy2018}
Svyatlovskiy, M.~V. (2018).
\newblock Axiomatization and polynomial solvability of strictly positive fragments of certain modal logics.
\newblock {\em Mathematical Notes\/}, {\em 103\/}, 952--967.

\bibitem[{Troelstra(1973)}]{Troelstra1973}
Troelstra, A.~S. (1973).
\newblock {\em Metamathematical Investigation of Intuitionistic Arithmetic and Analysis\/}, vol. 344 of {\em Lecture Notes in Mathematics\/}.
\newblock Berlin: Springer.

\bibitem[{Vardanyan(1986)}]{Vardanyan1986}
Vardanyan, V.~A. (1986).
\newblock Arithmetic complexity of predicate logics of provability and their fragments.
\newblock {\em Doklady Akad. Nauk SSSR\/}, {\em 288\/}(1), 11--14.
\newblock In Russian. English translation in Soviet Mathematics Doklady 33, 569--572 (1986).

\bibitem[{Vardanyan(1988)}]{Vardanyan1988}
Vardanyan, V.~A. (1988).
\newblock Bounds on the arithmetical complexity of predicate logics of provability.
\newblock In S.~N. Adyan (Ed.) {\em Questions of Cybernetics: Complexity of Computation and Applied Mathematical Logic\/}, vol. 134 of {\em Scientific Council of the USSR Academy of Sciences on the complex problem ``Cybernetics''\/}, (pp. 46--72). Academy of Sciences of the USSR.
\newblock In Russian.

\bibitem[{Visser(1985)}]{viss:eval85}
Visser, A. (1985).
\newblock Evaluation, provably deductive equivalence in {H}eyting's {A}rithmetic of substitution instances of propositional formulas.
\newblock Logic Group Preprint Series~4, Faculty of Humanities, Philosophy, Utrecht University, Janskerkhof 13, 3512 BL Utrecht.

\bibitem[{Visser(2002)}]{viss:subs02}
Visser, A. (2002).
\newblock Substitutions of ${\Sigma}^0_1$-sentences: explorations between intuitionistic propositional logic and intuitionistic arithmetic.
\newblock {\em Annals of Pure and Applied Logic\/}, {\em 114\/}, 227--271.

\bibitem[{Visser \& de~Jonge(2006)}]{VisserAndDeJonge:2006:NoEscape}
Visser, A., \& de~Jonge, M. (2006).
\newblock No escape from {V}ardanyan's theorem.
\newblock {\em Archive for Mathematical Logic\/}, {\em 45\/}(5), 539--554.

\bibitem[{Visser \& Zoethout(2019)}]{VisserZoethout2019}
Visser, A., \& Zoethout, J. (2019).
\newblock Provability logic and the completeness principle.
\newblock {\em Annals of Pure and Applied Logic\/}, {\em 170\/}(6), 718--753.

\bibitem[{Wolter \& Zakharyaschev(2001)}]{WolterZakharyaschev2001}
Wolter, F., \& Zakharyaschev, M. (2001).
\newblock Decidable fragments of first-order modal logics.
\newblock {\em The Journal of Symbolic Logic\/}, {\em 66\/}(3), 1415--1438.

\bibitem[{Yavorsky(2002)}]{Yavorsky2002}
Yavorsky, R.~E. (2002).
\newblock On arithmetical completeness of first-order logics of provability.
\newblock In F.~Wolter, H.~Wansing, M.~de~Rijke, \& M.~Zakharyaschev (Eds.) {\em Advances in Modal Logic 3\/}, (pp. 1--16). World Scientific Publishing Co. Pte. Ltd.

\end{thebibliography}
\end{document}